\documentclass[11pt]{amsart}
\usepackage{amsmath}
\usepackage{amsthm}
\usepackage{amssymb}
\usepackage[pdftex]{hyperref}
\usepackage[centering]{geometry}                
\geometry{letterpaper}   
\usepackage{tikz}
\usetikzlibrary{decorations.markings}
\usetikzlibrary{arrows}
\usetikzlibrary{svg.path}
\usetikzlibrary{external}

\newtheorem{theorem}{Theorem}
\newtheorem{proposition}{Proposition}[section]
\newtheorem{corollary}[theorem]{Corollary}
\newtheorem{lemma}[proposition]{Lemma}

\theoremstyle{definition}
\newtheorem{remark}{Remark}

\title[Regularity conditions in the CLT]{Regularity conditions in the CLT for linear eigenvalue statistics of Wigner matrices}

\begin{document}

\begin{abstract}We show that the variance of centred linear statistics of eigenvalues of GUE matrices remains bounded for large $n$ for some classes of test functions less regular than Lipschitz functions. This observation is suggested by the limiting form of the variance (which has previously been computed explicitly), but it does not seem to appear in the literature. We combine this fact with comparison techniques following Tao-Vu and Erd\"os, Yau, et al. and a Littlewood-Paley type decomposition to extend the central limit theorem for linear eigenvalue statistics to functions in the H\"older class $C^{1/2+\epsilon}$ in the case of matrices of Gaussian convolution type. We also give a variance bound which implies the CLT for test functions in the Sobolev space $H^{1+\epsilon}$ and $C^{1-\epsilon}$ for general Wigner matrices satisfying moment conditions.  If the additional assumption of the test function being supported away from the edge of the spectrum is made, we prove the CLT for test functions of regularity $\dot{H}^{1/2}\cap L^\infty, H^{1/2+}$ and $H^{1-}$ for GUE, Johansson and Wigner matrices respectively. 
Previous results on the CLT impose the existence and continuity of at least one classical derivative.
\end{abstract}
\author{Philippe Sosoe and Percy Wong}
\address{Department of Mathematics, Princeton University, Fine Hall, Washington Road, Princeton NJ 08540, USA}
\email{psosoe@math.princeton.edu}
\email{pakwong@math.princeton.edu}
\date{\today}
\maketitle
\section{Introduction}
We are concerned with the fluctuations of linear eigenvalue statistics
\[\mathcal{N}_n[\varphi]=\sum_{j=1}^n\varphi(\lambda_j),\]
where $\lambda_j$, $1\le j \le n$ are the eigenvalues of a Hermitian Wigner matrix. 
The centred random variable
\begin{equation}\label{eq: centred}\mathcal{N}^\circ_n[\varphi]:=\mathcal{N}_n[\varphi]-\mathbf{E}(\mathcal{N}_n[\varphi])\end{equation}
exhibits strong cancellation, and it is known that for sufficiently smooth test functions, it converges to a normal random variable. This result is originally due to V. Girko, see \cite{girko}. In contrast, sums of $n$ centred i.i.d. random variables require a normalization of order $n^{-\frac{1}{2}}$ to obtain a non-trivial Gaussian limit. 

Some degree of smoothness is required of $\varphi$ for the asymptotic normality to hold without normalization, and the variance to remain bounded. Indeed, O. Costin and J. Lebowitz \cite{costinlebowitz} (see also \cite{gustafsson}) have shown that for GUE and for $y$ in the interior of the support of the limiting spectral density, and 
\[\varphi_y(x)=\mathbf{1}_{[y,\infty)}(x),\]
i.e. $\mathcal{N}_n[\varphi_y]$ is the number of eigenvalues greater than $y$, the variance has logarithmic asymptotic behaviour:
\[\mathbf{Var}(\mathcal{N}_n[\varphi])=\left(\frac{1}{2\pi^2}+o(1)\right)\log n.\]
Moreover, $\mathcal{N}_n^\circ[\varphi_y]/\sqrt{\mathbf{Var}(\mathcal{N}_n[\varphi_y)]}$ converges to a standard normal random variable. S. Dallaporta and V. Vu \cite{dallaportavu} have recently extended this result to Wigner matrices whose entries have sub-exponential tails. 

It is natural to ask what is the minimal degree of smoothness required of $\varphi$ for the variance to remain bounded, and for $\mathcal{N}^\circ_n[\varphi]$ to converge in distribution. The present work is motivated by this question.

In recent years, several authors have derived central limit theorems for centred linear eigenvalue statistics of Wigner matrices with various hypotheses, see \cite{andersonzeitouni}, \cite{cabanal}, \cite{chatterjee}, \cite{costinlebowitz}, \cite{dallaportavu}, \cite{guionnet}, \cite{johansson}, \cite{lytovapastur}, \cite{shcherbina}, \cite{sinaisoshnikov1}, \cite{sinaisoshnikov2}. In all these works, when general test functions are considered, these are required to be at least $C^1(\mathbb{R})$ (differentiable with a continuous derivative) and often much smoother. For example, A. Lytova and L. Pastur \cite{lytovapastur} have proved that for GOE matrices and $\varphi \in C^1(\mathbb{R})$ satisfying suitable decay conditions at infinity, $\mathcal{N}_n^\circ [\varphi]$ converges to a Gaussian random variable with mean zero and variance
\begin{equation}\label{eq: goevariance}
V_{\mathrm{GOE}}[\varphi] =\frac{1}{2\pi^2}\int_{-2}^2\int_{-2}^2\left(\frac{\varphi(x)-\varphi(y)}{x-y}\right)^2\frac{4-xy}{\sqrt{4-x^2}\sqrt{4-y^2}}\,\mathrm{d}x\mathrm{d}y.
\end{equation}
The variance $\mathbf{Var}(\mathcal{N}_n[\varphi])$ is controlled by a Poincar\'e-type bound specific to the Gaussian case.
Under a Lindeberg-type condition on the fourth moments, Lytova and Pastur extend their result (by comparison to the Gaussian case) to general real Wigner matrices assuming $\varphi \in C^5(\mathbb{R})$. For general symmetric Wigner matrices, there is an additional term in the expression for limiting variance on the right side of (\ref{eq: goevariance}), which vanishes if
\[\mathbf{E}|w_{ij}|^4=3\mathbf{E}|w_{ij}|^2,\]
where $w_{ij}=\bar{w}_{ij}$ are the entries of the Wigner matrix. One expects the fluctuations of linear statistics to be sensitive to the third and fourth moments of the entry distribution.

G. Anderson and O. Zeitouni \cite{andersonzeitouni} have developed a central limit theorem for linear statistics of $C^1(\mathbb{R})$ test functions for a large class of random matrices with independent, but not necessarily identically distributed entries which includes Wigner and Wishart matrices with entries satisfying a Poincar\'e inequality. 

Sobolev spaces offer an alternative to classical derivatives as a way to measure smoothness. For $s>0$, the (inhomogeneous) $L^2$-Sobolev space $H^s(\mathbb{R})$ is defined as the closure of the space of Schwartz functions $\mathcal{S}$ in the norm: \[\|f\|^2_{H^s}=\int_\mathbb{R}(1+|\xi|)^{2s}|\widehat{f}(\xi)|^2\,\mathrm{d}\xi,\] where $\widehat{f}$ is the Fourier transform of $f$.
K. Johansson \cite{johansson} considered Hermitian matrix models defined by their density with respect to Lebesgue measure on the entries:
\begin{gather*}\mathrm{d}\mu_n(\mathbf{x})=\frac{1}{Z}\exp(-\beta N\operatorname{Tr}V(\mathbf{x}))\,\mathrm{d}\mathbf{x},\\ \mathrm{d}\mathbf{x}=\prod_{i<j}\mathrm{d}\Re x_{ij}\mathrm{d}\Im x_{ij}\prod_{i}\mathrm{d}x_{ii}.\end{gather*}
It is well-known that for such matrices, the joint density on $\mathbb{R}^N$ of the eigenvalues has the form
\[\rho_{n,\beta}(\lambda_1,\ldots, \lambda_n) =\frac{1}{Z_{n,\beta}}\exp\left(-N\sum_{j=1}^N V(\lambda_j) + \frac{\beta}{2}\sum_{j\neq j}\log|\lambda_i-\lambda_j|\right),\]
and that the classical GUE ensemble corresponds to $\beta=2$, $V=2x^2$. Johansson obtains the central limit theorem for linear statistics for test functions of class $H^{2+\epsilon}(\mathbb{R})$ when $\beta=2$ and $H^{17/2+\epsilon}(\mathbb{R})$ when $\beta \neq 2$.\footnote{In \cite{johansson}, Remark 2.5, K. Johansson explains that the restrictions $\varphi \in H^{2+\epsilon}$ and $\varphi \in H^{17/2+\epsilon}$ are technical and that the ``correct condition should be the finiteness'' of the limiting expression for the variance.}
Based on an idea in \cite{johansson}, M. Shcherbina  \cite{shcherbina} (see also \cite{shcherbina2}) proved a variance bound which allowed her to extend the central limit theorem for linear statistics of real Wigner matrices to functions in the Sobolev space $H^{3/2+\epsilon}(\mathbb{R})$, for $\epsilon>0$ and  under weak assumptions on the moments of the entries. Note that such functions are in $C^1(\mathbb{R})$ by the Sobolev embedding. The key estimate in Shcherbina's work is the following bound for the variance of $\mathcal{N}_n[\varphi]$ (see \cite{shcherbina}, Prop. 1):
\begin{equation} \label{shcherbinasbound} \mathbf{Var}(\mathcal{N}_n[\varphi]) \le C_s\|\varphi\|^2_{H^s(\mathbb{R})}\int_0^\infty e^{-\eta}\eta^{2s-1}\int_{-\infty}^\infty\mathbf{Var}(\mathrm{Tr} G(E+i\eta))\,\mathrm{d}E\mathrm{d}\eta, \end{equation}
where 
\[G(E+i\eta)=G(z)=\frac{1}{H-z}\] 
is the resolvent matrix. It is the formula on the right side in (\ref{shcherbinasbound}) which motivated the Littlewood-Paley approach explained in Section \ref{sec: lp}. Equation (\ref{shcherbinasbound}) shows that it suffices to obtain a bound for the variance of  $\mathrm{Tr} G(z)$ of order $\eta^{-2s}$ to conclude that the variance of linear statistics can be bounded in terms of the $H^{s+\epsilon}$ norm of the test function for $\epsilon>0$. (The integration in $x$ turns out to be harmless).

In their work on universality for Wigner matrices, L. Erd\"os, H. T. Yau and their collaborators have obtained large deviation bounds of order $\eta^{-2}n^2\log n$ for the trace of the resolvent $\mathrm{Tr} G(z)$ for $\eta \gg n^{-1}$ (see the survey paper \cite{erdostucson}). We use their methods to show that
\begin{equation}\label{eq: variance}
\mathbf{Var}(\mathrm{Tr} G(z))\le C_\epsilon\eta^{-2-\epsilon}, \quad |\Re z|<5.
\end{equation}
Our result for linear statistics of functions in $H^{1+\epsilon}$ follows from this estimate:
\begin{theorem}\label{thm2}
Let $\varphi \in H^{1+\epsilon}(\mathbb{R})$, where $\epsilon>0$ is arbitrary. Define the random variable
\[\mathcal{N}_n[\varphi]=\sum_{j=1}^n\varphi(\lambda_j),\]
where $\lambda_j$, $1\le j\le n$ are the eigenvalues of a Hermitian Wigner matrix whose entries satisfy the following condition (``Condition \textbf{C0}''):
\begin{equation}
\label{eqn: Czero}
 \mathbf{P}(|w_{ij}|\ge t^{c})\le e^{-t}, \quad c>0.
 \end{equation}
There is a constant $C>0$ such that
\begin{equation}
\label{eq: h1variance}
\mathbf{Var}(\mathcal{N}_n[\varphi]) \le C\|\varphi\|^2_{H^{1+\epsilon}(\mathbb{R})}.
\end{equation}
In particular, $\mathcal{N}_n^\circ[\varphi]$ converges in distribution to a random variable with variance given by 
\begin{align*}&\frac{1}{4\pi^2}\int_{-2}^2\int_{-2}^2\left(\frac{\varphi(x)-\varphi(y)}{x-y}\right)^2\frac{4-xy}{\sqrt{4-x^2}\sqrt{4-y^2}}\,\mathrm{d}x\mathrm{d}y\\
+ &\frac{\kappa_4}{4\pi^2}\left(\int_{-2}^2\varphi(x)\frac{2-x^2}{\sqrt{4-x^2}}\,\mathrm{d}x\right)^2+\frac{w_2-2}{4\pi^2}\left(\int_{-2}^2\varphi(x)x\frac{2-x^2}{\sqrt{4-x^2}}\,\mathrm{d}x\right)^2,
\end{align*}
where 
\begin{align*}
\kappa_4&=\mathbf{E}|w_{ij}|^4-3\\
w_2 &= \mathbf{E}|w_{ij}|^2.
\end{align*}
\end{theorem}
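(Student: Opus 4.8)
The plan is to prove the variance bound \eqref{eq: h1variance} by combining Shcherbina's formula \eqref{shcherbinasbound} with the resolvent variance estimate \eqref{eq: variance}, and then to deduce the distributional convergence by a comparison argument reducing to the Gaussian case where the limit is already known.

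First I would record the estimate \eqref{eq: variance}, namely $\mathbf{Var}(\mathrm{Tr}\,G(z)) \le C_\epsilon \eta^{-2-\epsilon}$ for $|\Re z| < 5$, which follows from the large deviation bounds of Erd\"os--Yau and collaborators for $\mathrm{Tr}\,G(z)$ when $\eta \gg n^{-1}$; for $\eta \lesssim n^{-1}$ one uses the trivial deterministic bound $|\mathrm{Tr}\,G(z)| \le n/\eta$ together with the rigidity of eigenvalues to control the contribution, and for $|\Re z|$ large (outside the bulk plus a neighborhood) one uses that the spectrum is contained in a bounded set with overwhelming probability, so the resolvent is essentially deterministic there. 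The point is that the exponent $-2-\epsilon$ (rather than the sharp $-2$) costs us only an $\epsilon$ in Sobolev regularity. Second, I would insert this into \eqref{shcherbinasbound} with $s = 1 + \epsilon/2$: the $E$-integral of $\mathbf{Var}(\mathrm{Tr}\,G(E+i\eta))$ over $|E| < 5$ contributes a bounded factor (this is the ``harmless'' integration in $x$ alluded to after \eqref{shcherbinasbound} — one splits into the bulk region where the pointwise bound applies and the region $|E| \ge 5$ where the variance is negligible), and the remaining $\eta$-integral $\int_0^\infty e^{-\eta}\eta^{2s-1}\eta^{-2-\epsilon}\,\mathrm{d}\eta = \int_0^\infty e^{-\eta}\eta^{\epsilon/2 - 1 - \epsilon}\,\mathrm{d}\eta$ converges near $0$ precisely because $2s - 1 - (2+\epsilon) = -1 + (\text{something positive})$ when $s > 1 + \epsilon/2$. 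This yields \eqref{eq: h1variance} with a constant independent of $n$.

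For the distributional statement, I would argue by approximation and comparison. The explicit limiting variance functional is continuous in $\varphi$ with respect to the $H^{1+\epsilon}$ norm (each of the three quadratic forms — the double integral of the divided difference squared, and the two squared linear functionals — is bounded by $C\|\varphi\|_{H^{1+\epsilon}}^2$, using $\|\varphi\|_{H^{1/2+\delta}}$-type control on the double integral). Hence it suffices to prove the CLT for a dense class, e.g. $\varphi \in C_c^\infty$, and then transfer it: given $\varphi \in H^{1+\epsilon}$, pick $\varphi_k \to \varphi$ smooth, note $\mathcal{N}_n^\circ[\varphi - \varphi_k]$ has variance $\le C\|\varphi - \varphi_k\|_{H^{1+\epsilon}}^2$ uniformly in $n$ by \eqref{eq: h1variance}, so $\mathcal{N}_n^\circ[\varphi_k] \to \mathcal{N}_n^\circ[\varphi]$ in $L^2$ uniformly in $n$, and a standard triangle-inequality argument in the L\'evy (or characteristic-function) metric upgrades convergence in distribution of the $\varphi_k$-statistics to that of the $\varphi$-statistic. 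For smooth $\varphi$ under Condition \textbf{C0}, the CLT with the stated three-term variance is already available in the literature (Lytova--Pastur type results, extended to Wigner matrices with the relevant moment dependence via a Lindeberg/comparison scheme as in Tao--Vu and Erd\"os--Yau); the $\kappa_4$ and $w_2 - 2$ terms are exactly the corrections recorded there.

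The main obstacle I anticipate is establishing \eqref{eq: variance} with the correct $\eta$-dependence uniformly down to $\eta$ of order slightly larger than $n^{-1}$, and patching in the regime $\eta \lesssim n^{-1}$ without losing more than a power $\eta^{-\epsilon}$: this requires invoking the local semicircle law and eigenvalue rigidity at the optimal scale and being careful that the $E$-integration does not reintroduce an $n$-dependent factor through the length of the integration region or through the behavior near the spectral edges $E = \pm 2$. The comparison step for non-smooth $\varphi$ is not an obstacle once \eqref{eq: h1variance} is in hand, since the uniform variance bound is exactly what makes the approximation rigorous; the only mild care needed there is to confirm that the comparison-based CLT for smooth test functions genuinely produces the three-term variance with the moment-dependent corrections under Condition \textbf{C0}, which follows by matching four moments and tracking the resulting cumulant contributions.
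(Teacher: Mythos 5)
Your route is genuinely different from the paper's. The paper does \emph{not} apply Shcherbina's bound \eqref{shcherbinasbound} directly to $\varphi$; instead it develops the Littlewood--Paley decomposition of Section~\ref{sec: lp}, first cuts $\varphi$ as $\chi\varphi+(1-\chi)\varphi$ (with $\chi$ a bump equal to $1$ on $[-3,3]$ and supported in $[-4,4]$) to confine attention to $|E|<5$ where Proposition~\ref{prop:varh1_res} applies, and then feeds Proposition~\ref{prop:varh1_res} into the discrete-frequency representation \eqref{eq: lpvariancebound}. The motivation for that detour is that the $L^2$-based inequality \eqref{shcherbinasbound} does not adapt to the H\"older/Besov spaces needed for Theorem~\ref{thm1}, so the paper wants a single framework covering both $H^s$ and $C^\alpha$. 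For Theorem~\ref{thm2} in isolation, your direct use of \eqref{shcherbinasbound} is a perfectly legitimate and arguably leaner path, and your handling of the $E$-integral over $|E|\ge 5$ and of the small-$\eta$ regime plays the same structural role as the paper's $\chi$ cutoff and the sub-$n^{-1+\delta}$ case of Proposition~\ref{prop:varh1_res}. Both routes share the genuinely hard ingredient, namely the resolvent variance estimate \eqref{eq: variance}; your sketch of that estimate is compatible with the paper's self-consistent-equations proof. The approximation/comparison step for the distributional convergence is essentially identical to the lemma in Section~\ref{sec: approx}, which the paper also invokes without spelling out the smooth-function CLT.

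One concrete error to fix: with $s=1+\epsilon/2$ and the bound $\mathbf{Var}(\mathrm{Tr}\,G)\lesssim\eta^{-2-\epsilon}$, the $\eta$-integrand near $0$ is $\eta^{2s-1}\cdot\eta^{-2-\epsilon}=\eta^{(2+\epsilon)-1-(2+\epsilon)}=\eta^{-1}$, so $\int_0^1 e^{-\eta}\eta^{-1}\,\mathrm{d}\eta$ diverges (your own criterion ``$s>1+\epsilon/2$'' shows $s=1+\epsilon/2$ sits exactly on the borderline, and the exponent you wrote, $\eta^{\epsilon/2-1-\epsilon}$, is also a miscalculation). The cure is immediate: take $s=1+\epsilon$ (which is what you want in order to land on $\|\varphi\|_{H^{1+\epsilon}}$) together with a resolvent variance exponent $\eta^{-2-\epsilon'}$ for some $\epsilon'<2\epsilon$, say $\epsilon'=\epsilon$, giving $\eta^{2s-1-2-\epsilon'}=\eta^{\epsilon-1}$ and hence $\Gamma(\epsilon)<\infty$. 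With that adjustment the argument closes.
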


Although the power of $\eta$ in \eqref{eq: variance} is essentially of the correct order, and Theorem \ref{thm1} improves significantly on previous results, the regularity assumption is not optimal. This is because the estimate \eqref{shcherbinasbound} is not sharp. In terms of our Littlewood-Paley expression for the variance \eqref{eq: equality}, this can be seen as a consequence of the application of Cauchy-Schwarz to the covariance in \eqref{eq: crucialCS}, which is wasteful. We expect the covariance between $\mathrm{Tr}G(z_1)$ and $\mathrm{Tr}G(z_2)$ to be much smaller than the square root of the product of variances as soon as $\Re z_1$ and $\Re z_2$ are separated. 

For GUE matrices, the limiting variance of the random variable $\mathcal{N}_n[\varphi]$ becomes (see \cite{cabanal}, or \cite{shcherbinapastur}, Ch. 5): 
\begin{equation}
\label{eq: guevariance}
V_{\mathrm{GUE}}[\varphi] =\frac{1}{4\pi^2}\int_{-2}^2\int_{-2}^2\left(\frac{\varphi(x)-\varphi(y)}{x-y}\right)^2\frac{4-xy}{\sqrt{4-x^2}\sqrt{4-y^2}}\,\mathrm{d}x\mathrm{d}y.
\end{equation} 
This integral is finite for $\alpha$-H\"older functions $\varphi$ of order $\alpha>1/2$ (see Section \ref{sec: chalfgue}). For $0<\alpha<1$, the space $C^\alpha(\mathbb{R})$ of  $\alpha$-H\"older functions consists of (continuous) functions such that the norm:
\begin{align*}
\|\varphi\|_{\alpha} = \|\varphi\|_{L^\infty(\mathbb{R})}+[\varphi]_{C^\alpha(\mathbb{R})}\\
[\varphi]_{C^{\alpha}}=\sup_{x\neq y}\frac{|\varphi(x)-\varphi(y)|}{|x-y|^\alpha}
\end{align*}
is finite. Since we are only concerned with functions defined on the real line, we use the notation $C^\alpha=C^\alpha(\mathbb{R})$ throughout. If we ignore the edges of the limiting spectrum, the integral in \eqref{eq: guevariance} becomes equivalent (as far as the regularity of $f$ is concerned) to
\[\int_{\mathbb{R}}\int_{\mathbb{R}}\frac{(f(x)-f(y))^2}{(x-y)^2}\,\mathrm{d}x\mathrm{d}y.\]
The last quantity is equal, up to a constant factor, to the homogeneous $\dot{H}^{1/2}(\mathbb{R})$ norm:
\[\|f\|_{\dot{H}^{\frac{1}{2}}}=\int_{\mathbb{R}}|\xi||\widehat{f}(\xi)|^2\,\mathrm{d}\xi.\]

One expects that also for finite $n$ the variance of $\mathbf{Var}(\mathcal{N}_n[\varphi])$ can be bounded in terms of the $C^{1/2+\epsilon}$ norm of $\varphi$, and by the $H^{1/2}(\mathbb{R})$ norm when $\varphi$ is supported away from the spectral edges $\pm 1$. This forms the basis of our next result:
\begin{theorem} \label{thm1} Let $\varphi\in \dot{H}^{1/2}\cap L^\infty(\mathbb{R})$. Define the random variable
\begin{equation}
\label{eqn: Nndefinition}
\mathcal{N}_{n}[\varphi] =\sum_{j=1}^n\varphi(\lambda_j),
\end{equation}
where $\lambda_j$ are the eigenvalues of a GUE matrix of size $n$.

If $\varphi$ is properly supported inside the bulk:
 \[\operatorname{supp} \varphi \subset (-2+\epsilon_0,2-\epsilon_0),\]
there is a constant $C(\epsilon_0)$ such that we have the estimate:
\[\mathbf{Var}(\mathcal{N}_n[\varphi])\le C\|\varphi\|^2_{\dot{H}^{1/2}\cap L^\infty(\mathbb{R})}.\] 

For functions $\varphi \in C^{1/2+\epsilon}$ (not necessarily compactly supported in the bulk), we have the following asymptotic result:
\begin{equation}
\label{eqn: guevariancebound}
\mathbf{Var}(\mathcal{N}_{n}[\varphi]) \rightarrow \frac{1}{4\pi^2}\int_{-2}^2\int_{-2}^2\left(\frac{\varphi(x)-\varphi(y)}{x-y}\right)^2\frac{4-xy}{\sqrt{4-x^2}\sqrt{4-y^2}}\,\mathrm{d}x\mathrm{d}y
\end{equation}
uniformly on bounded subsets of $C^{1/2+\epsilon}$. As a consequnce, there is a function a non-negative function $C(K)$, bounded on compact subsets of $\mathbb{R}$ such that
\[\mathbf{Var}(\mathcal{N}_{n}[\varphi])\le C(\|\varphi\|_{C^{1/2+\epsilon}})\|\|\varphi\|^2_{C^{1/2+\epsilon}}.\]

Moreover if $\varphi$ satisfies either of the two hypotheses above, the centred random variable
\[\mathcal{N}_{n}^\circ[\varphi]= \mathcal{N}_{n}[\varphi]-\mathbf{E}(\mathcal{N}_{n}[\varphi])\]
converges in distribution to a normal random variable of mean zero and variance given by (\ref{eq: guevariance}).
\end{theorem}

For functions supported away from the spectral edges $\pm 2$, the regularity threshold for asymptotic normality to hold without normalization is likely to be $\dot{H}^{1/2}$. This is consistent with alternate expressions for the variance found in \cite{cabanal} and \cite{johansson}, as well as the logarithmic divergence of the variance in case $\varphi =\mathbf{1}_I$, the indicator function of an interval $I$ in the interior of the limiting spectrum. Such functions barely fail to be in $\dot{H}^{1/2}$. Note in that in the case of Haar-distributed matrices on the unitary group, P. Diaconis and S. Evans \cite{diaconisevans} have obtained a sharp (with respect to the regularity of the test functions) central limit theorem. They find that the variance of the analogue of $\mathcal{N}_n[\varphi]$ grows like the $n$th partial sum of the appropriate $H^{1/2}$ norm. Their techniques rely on exact computations of matrix traces that are not available in the case of Wigner matrices. 

When the test function $\varphi$ is not supported away from the spectral edges, the expression \eqref{eq: guevariance} is no longer equivalent to the the $\dot{H}^{1/2}(\mathbb{R})$, which is why in this paper we use the $C^\alpha$ scale of spaces to measure regularity in case the support of the test function extends to the edge. We expect our result to be nearly optimal in this scale: i.e. the variance does not remain bounded for general test functions $\varphi \in C^{1/2-\epsilon}$ for any $\epsilon>0$. The limiting variance can be expressed in terms of an $\dot{H}^{1/2}$ norm defined relative to an orthogonal expansion of $\varphi$, restricted to the limiting specturm, into Chebyshev polynomials  (see \cite{johansson}). At the edges of the limiting spectrum, this expansion is not equivalent to the Fourier decomposition. This has to be taken into account to obtain a sharp result, and is the subject of current investigation.

By a moment matching argument and a saddle point analysis, we obtain a central limit theorem under similar assumptions than those made on $\varphi$ in the GUE case treated in \ref{thm1}, for more general Hermitian matrices, of the type considered by K. Johansson in \cite{johansson2}.
\begin{theorem}\label{thm3}
Let $W=(w_{ij})_{1\le i,j\le n}$ be a Hermitian Wigner matrix whose entries satisfy:
\[ \mathbf{P}(|w_{ij}|\ge t^{c})\le e^{-t}, \quad c>0,\]
and such that the first five moments of $w_{ij}$ match those of the GUE.  Form the Wigner matrix $M= \frac{1}{2\sqrt{n}}(W+V)$, and define the random variable $\mathcal{N}_n[\varphi]$ as in (\ref{eqn: Nndefinition}), but with $\lambda_j$ being the eigenvalues of $M$. If $\varphi\in C^{1/2+\epsilon}$ for $\epsilon>0$, or if $\varphi\in H^{1/2+\epsilon}$ and 
\[\operatorname{supp} \varphi \subset (-\sqrt{2}+\epsilon_0,\sqrt{2}-\epsilon_0)\] for some $\epsilon_0>$, then the random variable $\mathcal{N}_{n}^\circ[\varphi]$ converges in distribution to a mean zero normal random variable with variance given by 
\[\frac{1}{4\pi^2}\int_{-\sqrt{2}}^{\sqrt{2}}\int_{-\sqrt{2}}^{\sqrt{2}}\left(\frac{\varphi(x)-\varphi(y)}{x-y}\right)^2\frac{2-xy}{\sqrt{2-x^2}\sqrt{2-y^2}}\,\mathrm{d}x\mathrm{d}y.\]
\end{theorem}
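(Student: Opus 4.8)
The plan is to reduce to the pure GUE case of Theorem~\ref{thm1} by exploiting the Gaussian-convolution structure of $M$, and to absorb the loss of smoothness with the Littlewood--Paley decomposition of Section~\ref{sec: lp}. Let $V$ be the independent GUE matrix appearing in $M=\frac{1}{2\sqrt n}(W+V)$ and set $G=\sqrt2\,M=\frac{1}{\sqrt{2n}}(W+V)$; then $G$ is a Gaussian-divisible Wigner matrix whose semicircle law is supported on $[-2,2]$ and which retains an order-one independent GUE component. Since $\mathcal{N}_n[\varphi]$ for $M$ equals $\mathcal{N}_n[\widetilde\varphi]$ for $G$ with $\widetilde\varphi(x)=\varphi(x/\sqrt2)\in C^{1/2+\epsilon}$, and since the substitution $x\mapsto\sqrt2\,x$ carries $V_{\mathrm{GUE}}[\widetilde\varphi]$ from (\ref{eq: guevariance}) into the expression in the statement, it suffices to prove the CLT with limiting variance $V_{\mathrm{GUE}}$ for $\mathcal{N}_n^\circ[\psi]$, $\psi\in C^{1/2+\epsilon}$, where from now on all linear statistics refer to the matrix $G$.

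Next I would split $\psi=S_J\psi+\sum_{j>J}\Delta_j\psi$ at a truncation level $J$ that is sent to infinity only after $n$: here $S_J\psi$ is a $C^\infty$ function with $n$-independent (but $J$-dependent) bounds on its derivatives, and the dyadic pieces obey $\|\Delta_j\psi\|_{L^\infty}\le C\,2^{-j(1/2+\epsilon)}\|\psi\|_{C^{1/2+\epsilon}}$. For the high-frequency tail I would invoke a variance bound for $G$ — the Gaussian-divisible analogue of the bound in Theorem~\ref{thm1}, which follows from the saddle-point analysis of the correlation kernel discussed below — to obtain $\limsup_{n\to\infty}\mathbf{Var}\bigl(\mathcal{N}_n^\circ[\sum_{j>J}\Delta_j\psi]\bigr)\le\varepsilon(J)$ with $\varepsilon(J)\to0$ as $J\to\infty$; by Chebyshev this piece is asymptotically negligible once $n\to\infty$ and then $J\to\infty$.

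The heart of the matter is the smooth part $S_J\psi$, which I would handle by a Green's-function swapping comparison in the spirit of Tao--Vu and Erd\H{o}s--Yau: replace the non-Gaussian part $W$ by an independent GUE matrix one entry at a time and estimate the resulting change in the characteristic function of $\mathcal{N}_n^\circ[S_J\psi]$. Because $G$ carries an order-one Gaussian component, its correlation kernel admits Johansson's contour-integral representation for Gaussian-divisible ensembles (cf.\ \cite{johansson2}), and a steepest-descent/saddle-point analysis of this representation yields the sharp local resolvent estimates that bound each swap by $o(n^{-2})$, uniformly in $n$ for fixed $J$; summing over the $O(n^2)$ swaps, and using that the first five moments of $W$ match the GUE so that the fourth-cumulant and second-moment correction terms in the general Wigner variance vanish identically, shows that $\mathcal{N}_n^\circ[S_J\psi]$ has the same limiting distribution as for a pure GUE matrix. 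Theorem~\ref{thm1}, applied to the fixed function $S_J\psi$, then gives $\mathcal{N}_n^\circ[S_J\psi]\Rightarrow\mathcal{N}(0,V_{\mathrm{GUE}}[S_J\psi])$.

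Finally I would combine the two parts by a $3\varepsilon$-argument: for any bounded Lipschitz $f$, $\mathbf{E}f(\mathcal{N}_n^\circ[\psi])$ differs from $\mathbf{E}f(Z_J)$, where $Z_J\sim\mathcal{N}(0,V_{\mathrm{GUE}}[S_J\psi])$, by at most $o(1)+O(\varepsilon(J)^{1/2})$, while $V_{\mathrm{GUE}}[S_J\psi]\to V_{\mathrm{GUE}}[\psi]$ as $J\to\infty$ by the continuity of the variance functional on $C^{1/2+\epsilon}$ proved in Section~\ref{sec: chalfgue}; letting $n\to\infty$ and then $J\to\infty$ yields the theorem. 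I expect the main obstacle to be the comparison step for the smooth part: one must show that adding the moment-matched but non-Gaussian $W$ to the Gaussian part does not disturb the linear statistics of a \emph{fixed} smooth test function, which requires both the saddle-point control of the Gaussian-divisible kernel and enough uniformity in the swapping estimates that $J$ can be held fixed throughout the comparison and sent to infinity only at the very end.
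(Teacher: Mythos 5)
Your proposal and the paper's proof start from the same ingredients---the Littlewood--Paley decomposition of Section~\ref{sec: lp}, Tao--Vu-style resolvent swapping, and the saddle-point analysis of the Br\'ezin--Hikami/Johansson kernel---but organize them differently. The paper splits the frequency axis at the $n$-dependent scales $\log n$ and $2\log n$: the low piece $\varphi^{1,n}$ is compared to GUE by resolvent swapping through the covariance representation \eqref{eq: equality}; the medium piece $\varphi^{2,n}$ (frequencies between $n$ and $n^2$, hence with $\|\varphi^{2,n}\|_{L^\infty}\lesssim n^{-1/2-\epsilon}$) has its variance shown to be $o(1)$ by the saddle-point analysis of Section~\ref{sec: saddlept}; the very high piece is trivially negligible. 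You instead cut at a fixed level $J$ sent to infinity only after $n$, invoke a $C^{1/2+\epsilon}$ variance bound for the Gaussian-divisible matrix itself to kill the tail $\sum_{j>J}\Delta_j\psi$, compare the fixed smooth low piece to GUE, apply Theorem~\ref{thm1}, and close with a $3\varepsilon$ argument. The comparison step for a fixed smooth function is indeed simpler than in the paper, since with $J$ fixed all the Poisson-kernel heights stay bounded below and the Tao--Vu expansion applies with margin.

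The gap is in the variance bound for the Gaussian-convolution matrix $M$, which you assert ``follows from the saddle-point analysis of the correlation kernel.'' The estimates the paper actually establishes---Lemma~\ref{lem:diagonalestimate} for the diagonal and \eqref{eqn: offdiagbound} for the off-diagonal product---hold only in the bulk $|x|,|y|<\sqrt 2-n^{-1/3+\delta_2}$, for spectra $z$ in the good set $Z_n$. Near the spectral edge there is no analogue of the Plancherel--Rotach/Airy decay that Section~\ref{sec: chalfgue} uses for pure GUE: the kernel $\mathcal{K}_n(\cdot,\cdot;z)$ depends on the random spectrum $z$ of the non-Gaussian part and the paper proves no edge estimate for it. Instead, the paper handles the edge for $\varphi^{2,n}$ by the crude bound $\mathbf{Var}(\mathcal{N}_n[(1-\theta_n)\varphi^{2,n}])\lesssim\|\varphi^{2,n}\|_{L^\infty}^2 n^{1+4\delta_2}$, exploiting that only $O(n^{1/2+2\delta_2})$ eigenvalues lie outside $[-\sqrt 2+n^{-1/3+\delta_2},\sqrt 2-n^{-1/3+\delta_2}]$; this is $o(1)$ precisely because $\|\varphi^{2,n}\|_{L^\infty}^2\lesssim n^{-1-2\epsilon}$, a consequence of cutting at $\log n$. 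For your tail, $\|\sum_{j>J}\Delta_j\psi\|_{L^\infty}\asymp 2^{-J(1/2+\epsilon)}\|\psi\|_{C^{1/2+\epsilon}}$ is independent of $n$, so the same edge bound produces a term of order $2^{-J(1+2\epsilon)}n^{1+4\delta_2}$ which diverges as $n\to\infty$ for every fixed $J$. In short, the step you rely on to make the high-frequency tail negligible is the step the paper is designed to circumvent, and the $C^{1/2+\epsilon}$ variance bound for $M$ does not follow from what is proved. To repair the argument you would need either genuine edge control for the random kernel $\mathcal{K}_n(\cdot,\cdot;z)$, or---as the paper does---to tie the Littlewood--Paley cutoff to $n$ so that the piece beyond the reach of the resolvent comparison already has $L^\infty$ norm $O(n^{-1/2-\epsilon})$.
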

The variance normalization in the previous theorem is of no particular significance. We have chosen it as a matter of convenience, for the calculations appearing in Sections \ref{sec: wigner} and \ref{sec: saddlept}. The strong assumption on the form of the matrix is needed to control the contribution to the variance from a certain range of frequencies of $\varphi$, but it is merely technical. It will be seen from the proof of Theorem \ref{thm3} that, although the variance of $\mathcal{N}_n[\varphi]$ admits an explicit expression in terms of (averages of) a determinantal kernal, we do not need very precise information on the asymptotics of this kernel to conclude. We believe that the control on the correlation between separated eigenvalues required to obtain our results for the GUE and Gaussian convolution matrices can be obtained for much more general Wigner matrices, without the use of explicit formulas.

In the final section, we derive the following central limit theorem for linear eigenvalue statistics for general Wigner matrices under matching moment conditions for regularity below one bounded derivative:
\begin{theorem}
Let 
\[H = \frac{1}{\sqrt{n}}W\]
be a Wigner matrix such that the entries of $W$ satisfy condition (\textbf{C0}) and such that the distribution of its entries matches that of GUE up to five moments.  Then for $\epsilon > 0$ sufficiently small, and any function $\varphi \in C^{1-\epsilon}$, the centred random variable
\[\mathcal{N}_{n}^\circ[\varphi]= \mathcal{N}_{n}[\varphi]-\mathbf{E}(\mathcal{N}_{n}[\varphi])\]
converges in distribution to a normal random variable of mean zero and variance given by (\ref{eq: guevariance}).  If the test function has support on $(-2+\epsilon_0,2-\epsilon_0)$ for some $\epsilon_0 > 0$, then the result holds for test functions $\varphi \in H^{1-\epsilon}$
\end{theorem}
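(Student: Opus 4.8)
The plan is to decompose $\varphi$ into a smooth low-frequency part and a high-frequency remainder by a Littlewood--Paley decomposition, use a moment-matching comparison to transfer the GUE central limit theorem of Theorem~\ref{thm1} to the low-frequency part, and prove a Littlewood--Paley variance bound for general Wigner matrices showing that the high-frequency part does not contribute to the limiting fluctuations. As a preliminary reduction, rigidity of the eigenvalues (valid under Condition~\textbf{C0}) localizes the spectrum to $[-2-o(1),2+o(1)]$ with probability $1-O(n^{-D})$ for every $D$, so multiplying $\varphi$ by a fixed smooth cutoff equal to $1$ on $[-3,3]$ changes $\mathcal{N}^\circ_n[\varphi]$ by a term of vanishing variance and mean; we may thus assume $\varphi$ is supported in $[-4,4]$, with its $C^{1-\epsilon}$ norm unchanged up to a constant. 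Write $\varphi=\sum_{j\ge 0}\varphi_j$ for the dyadic pieces: then $\|\varphi_j\|_{L^\infty}\le C2^{-j(1-\epsilon)}\|\varphi\|_{C^{1-\epsilon}}$, and since $\varphi_j$ is band-limited to $|\xi|\sim 2^j$ with effective support of measure $O(1)$, also $\|\varphi_j''\|_{L^1},\|\varphi_j''\|_{L^\infty}\le C2^{2j}\|\varphi_j\|_{L^\infty}$. Fix $J$, set $\varphi^\flat=\sum_{j\le J}\varphi_j$ and $\varphi^\sharp=\varphi-\varphi^\flat$, and let $n\to\infty$ with $J$ fixed, then $J\to\infty$.

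For the low-frequency part, $\varphi^\flat$ is $C^\infty$ with $C^k$ norm bounded by a constant depending only on $J$ and $k$. Because the first five moments of $W$ match those of the GUE, the four moment theorem for linear eigenvalue statistics (following Tao--Vu, together with the local semicircle law and rigidity of Erd\H{o}s--Yau and coauthors) gives $\mathbf{E}F(\mathcal{N}^\circ_n[\varphi^\flat])=\mathbf{E}F(\mathcal{N}^{\circ,\mathrm{GUE}}_n[\varphi^\flat])+o(1)$ for every bounded smooth $F$; by Theorem~\ref{thm1} (or Theorem~\ref{thm3} after one further comparison) the right-hand side tends to $\mathbf{E}F\big(\mathcal{N}(0,V_{\mathrm{GUE}}[\varphi^\flat])\big)$, the extra terms in the general-Wigner limiting variance (Theorem~\ref{thm2}) vanishing since the first four moments match. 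Since $\varphi\in C^{1-\epsilon}\subset H^{1/2}$ (by the compact support, for $\epsilon<1/2$) and $\varphi^\flat\to\varphi$ in $H^{1/2}$ as $J\to\infty$, with $V_{\mathrm{GUE}}$ continuous on $H^{1/2}$, we have $V_{\mathrm{GUE}}[\varphi^\flat]\to V_{\mathrm{GUE}}[\varphi]$.

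The crux is the high-frequency estimate $\lim_{J\to\infty}\limsup_{n\to\infty}\mathbf{Var}(\mathcal{N}_n[\varphi^\sharp])=0$. Using $\mathbf{Var}(\mathcal{N}_n[\varphi^\sharp])\le\big(\sum_{j>J}\mathbf{Var}(\mathcal{N}_n[\varphi_j])^{1/2}\big)^2$, it is enough to bound each term so that the sum over $j>J$ is summable and tends to $0$ with $J$. For $J<j\le(1-\delta)\log_2 n$ I would use a frequency-adapted Helffer--Sj\"ostrand representation
\[\mathcal{N}^\circ_n[\varphi_j]=\frac{1}{\pi}\int_{\mathbb{C}}\partial_{\bar z}\widetilde\varphi_j(z)\,\big(\mathrm{Tr}\,G(z)-\mathbf{E}\,\mathrm{Tr}\,G(z)\big)\,\mathrm{d}A(z),\]
with a quasi-analytic extension $\widetilde\varphi_j$ supported in $|\Im z|\le 2^{-j}$, expand $\mathbf{Var}(\mathcal{N}_n[\varphi_j])$ as a double integral against $\mathbf{Cov}(\mathrm{Tr}\,G(z),\overline{\mathrm{Tr}\,G(w)})$, and feed in the covariance estimate
\[\big|\mathbf{Cov}(\mathrm{Tr}\,G(z),\overline{\mathrm{Tr}\,G(w)})\big|\le C_\epsilon\,(\Im z\,\Im w)^{-\epsilon/2}\,|z-\bar w|^{-2},\qquad \Im z,\Im w>0,\ |\Re z|,|\Re w|<5,\]
which refines~(\ref{eq: variance}) (its restriction to $w=z$) by exhibiting the decay of the covariance kernel in $|\Re z-\Re w|$ and should follow from the same resolvent methods. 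This decay is precisely what yields a gain over the crude Cauchy--Schwarz bound; carrying out the $\Im z,\Im w$ integrations and then the $\Re z,\Re w$ integrations against $|\varphi_j''|$ produces the Littlewood--Paley variance bound $\mathbf{Var}(\mathcal{N}_n[\varphi_j])\le C_\epsilon\,2^{j(1+\epsilon)}\|\varphi_j\|_{L^\infty}^2\le C_\epsilon\,2^{j(3\epsilon-1)}\|\varphi\|_{C^{1-\epsilon}}^2$. For the remaining frequencies $j>(1-\delta)\log_2 n$, where $\varphi_j$ oscillates on a scale finer than the eigenvalue spacing, I would use instead an orthogonality estimate: in $\mathbf{Var}(\mathcal{N}_n[\varphi_j])=\int\varphi_j^2\rho_1+\iint\varphi_j(x)\varphi_j(y)(\rho_2-\rho_1\otimes\rho_1)(x,y)\,\mathrm{d}x\,\mathrm{d}y$ the first term is at most $n\|\varphi_j\|_{L^\infty}^2$, while the cluster function $\rho_2-\rho_1\otimes\rho_1$ is, up to $O(n^{-D})$ errors controlled by the local law, frequency-localized to $|\xi|\lesssim n\ll 2^j$, making the second term negligible; this gives $\mathbf{Var}(\mathcal{N}_n[\varphi_j])\le Cn\,2^{-2j(1-\epsilon)}\|\varphi\|_{C^{1-\epsilon}}^2$. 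Summing the two ranges, $\limsup_{n\to\infty}\mathbf{Var}(\mathcal{N}_n[\varphi^\sharp])\le C_\epsilon\,2^{J(3\epsilon-1)}\|\varphi\|_{C^{1-\epsilon}}^2\to 0$ as $J\to\infty$, provided $\epsilon$ is small enough that $3\epsilon<1$ and $(1-\delta)(1-\epsilon)>\tfrac12$ for some $\delta>0$; this is the origin of the hypothesis that $\epsilon$ be sufficiently small.

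Finally, for fixed $t$ one writes $\mathbf{E}\,e^{it\mathcal{N}^\circ_n[\varphi]}=\mathbf{E}\,e^{it\mathcal{N}^\circ_n[\varphi^\flat]}+O\big(|t|\,\mathbf{Var}(\mathcal{N}_n[\varphi^\sharp])^{1/2}\big)$; letting $n\to\infty$ and then $J\to\infty$ identifies the limit of $\mathbf{E}\,e^{it\mathcal{N}^\circ_n[\varphi]}$ as $e^{-t^2V_{\mathrm{GUE}}[\varphi]/2}$, which is the assertion. I expect the main obstacle to be the covariance estimate above: while~(\ref{eq: variance}) gives its diagonal value, extracting the off-diagonal decay in $|\Re z-\Re w|$ requires pushing the resolvent analysis of Erd\H{o}s--Yau and coauthors through with two spectral parameters, and without this decay the Helffer--Sj\"ostrand integral is too lossy to reach regularity below one derivative. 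A secondary point, more routine but still requiring care, is the precise orthogonality estimate for the cluster function at frequencies above $n$ for general Wigner matrices, together with the tightness and uniform integrability needed to turn the variance and characteristic-function bounds into convergence in distribution.
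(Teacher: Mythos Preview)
Your overall strategy --- Littlewood--Paley split, moment-matching comparison to GUE on the low-frequency piece, separate variance control on the tail --- matches the paper's. But you place the cutoff at a \emph{fixed} $J$ and send $J\to\infty$ only after $n\to\infty$, which forces you to prove a nontrivial uniform-in-$n$ variance bound for each dyadic piece; the two estimates you correctly flag as obstacles (the off-diagonal resolvent-covariance decay $|\mathbf{Cov}(\mathrm{Tr}\,G(z),\overline{\mathrm{Tr}\,G(w)})|\lesssim(\Im z\,\Im w)^{-\epsilon/2}|z-\bar w|^{-2}$, and the frequency localization of the cluster function $\rho_2-\rho_1\otimes\rho_1$) are genuine gaps. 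Neither is proved in the paper, and neither is needed.

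The paper avoids both by taking the cutoff at the \emph{$n$-dependent} level $(1+\delta)\log_2 n$, where $\delta>0$ is the parameter in Proposition~\ref{prop:comp} governing the range $\eta\ge n^{-1-\delta}$ in which the Tao--Vu resolvent expansion holds. With this choice the tail $\varphi^{2,n}=\sum_{k>(1+\delta)\log n}\varphi_k$ satisfies, by Lemma~\ref{lem: hfcutoff}, $\|\varphi^{2,n}\|_{L^\infty}\lesssim n^{-(1+\delta)(1-\epsilon)}\|\varphi\|_{C^{1-\epsilon}}$, and the \emph{trivial} bound
\[\mathbf{Var}(\mathcal{N}_n[\varphi^{2,n}])\le n^2\|\varphi^{2,n}\|_{L^\infty}^2\lesssim n^{2-2(1+\delta)(1-\epsilon)}\|\varphi\|^2_{C^{1-\epsilon}}\]
is already $o(1)$ as soon as $(1+\delta)(1-\epsilon)>1$; this is the actual source of the hypothesis ``$\epsilon$ sufficiently small''. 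No Helffer--Sj\"ostrand representation, no off-diagonal covariance decay, and no analysis of the two-point cluster function is required. For the low-frequency part $\varphi^{1,n}$ the paper reuses verbatim the covariance-comparison machinery of Section~\ref{sec: wigner} (equations \eqref{eqn: covcomparison}--\eqref{eqn:covdiffbd}, which is where the five matching moments enter), now with weights $\|g_k\|_{L^\infty}\lesssim 2^{-k(1-\epsilon)}$ from the $C^{1-\epsilon}$ assumption, to bound $\mathbf{Var}(\mathcal{N}_n[\varphi^{1,n}])$ by the GUE variance of Theorem~\ref{thm1} plus $o(1)$. The CLT then follows from the approximation lemma of Section~\ref{sec: approx}.

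Your conjectural covariance estimate is interesting in its own right: if provable, it would likely push the general-Wigner result toward $C^{1/2+\epsilon}$, a regime the paper reaches only for Gaussian-convolution matrices via the Br\'ezin--Hikami kernel. But it lies well beyond what is needed for the $C^{1-\epsilon}$ statement.
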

The value of $\epsilon$ in the previous theorem can be estimated in terms of the number of matching moments of the entry distribution with the GUE. 

\subsection*{Acknowledgements} The first author would like to thank David Renfrew for pointing out M. Shcherbina's result \cite{shcherbina}, and for discussions that provided the initial motivation for the present work during the 2011 Prague Summer School on Mathematical Physics. We would also like to thank L\'aszl\'o Erd\"os for helpful comments on the variance estimates 
(\ref{shcherbinasbound}) and (\ref{eq: lpvariancebound}).

\section{Approximation by Poisson Integrals} \label{sec: approx}
To derive the central limit theorem for functions with limited smoothness, we bound the variance of $\mathcal{N}^\circ_n[\varphi]$ by the norm of $\varphi$ in a function space describing the regularity of $\varphi$. The central limit theorem then follows by a simple approximation argument. In what follows, we use the next lemma, adapted from \cite{shcherbinapastur}, Proposition 3.2.5 (see also \cite{lytovapastur}, p. 1793).
\begin{lemma}
Suppose $X$ is a normed vector subspace of $C_b(\mathbb{R})$, i.e. continuous functions bounded on $\mathbb{R}$, such that for $\|\varphi\|_X \le K$, we have the bound
\[V_n(\varphi):=\mathbf{Var}\left(\mathcal{N}_n[\varphi]\right) \le C(K) \|\varphi\|_X^2,\]
where $C:\mathbb{R}^+\rightarrow \mathbb{R}^+$ is bounded on compact subsets.
If there exists a dense subspace $Y\subset X$ such that for $\psi \in Y$, we have
\[Z_n[\varphi]:= \mathbf{E}(e^{ix\mathcal{N}^\circ_n[\varphi]}) =\exp(-x^2V(\psi)/2)+o(1),\]
for a continuous quadratic functional $V$ on $X$,
then the central limit theorem is valid for all $\varphi \in X$.
\begin{proof}
Let $\{\psi_k\}\subset Y$ converge to $\varphi \in X$. In particular, $\|\psi_k\|_X$ is a bounded sequence. Then:
\begin{align*}
|Z_n[\psi_k](x)-Z_n[\varphi](x)| &\le |x|\mathbf{E}\left[ |\mathcal{N}^\circ_n[\psi_k]-\mathcal{N}_n^\circ[\varphi]|\right]\\
&\le |x|\mathbf{Var}(\mathcal{N}_n[\varphi-\psi_k])^{1/2}\\
&\lesssim |x|\|\varphi-\psi_k\|_X.
\end{align*}
The result follows from the continuity of $V$ and the L\'evy continuity theorem.
\end{proof}
\end{lemma}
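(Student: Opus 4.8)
The plan is to run a standard density argument, transferring the Gaussian limit from the ``nice'' subspace $Y$ to all of $X$ by using the variance bound as a quantitative continuity estimate for the map $\varphi \mapsto Z_n[\varphi]$. First I would fix $\varphi \in X$ and $x \in \mathbb{R}$, choose a sequence $\psi_k \in Y$ with $\psi_k \to \varphi$ in $X$, and observe that both $\{\psi_k\}$ and $\{\varphi - \psi_k\}$ stay in a fixed ball of $X$, say of radius $K$. From the elementary inequality $|e^{ia}-e^{ib}|\le|a-b|$, Jensen's inequality, and the hypothesized bound $\mathbf{Var}(\mathcal{N}_n[\varphi-\psi_k]) \le C(K)\|\varphi-\psi_k\|_X^2$, I obtain
\[
|Z_n[\psi_k](x) - Z_n[\varphi](x)| \le |x|\,\mathbf{E}\bigl|\mathcal{N}_n^\circ[\psi_k]-\mathcal{N}_n^\circ[\varphi]\bigr| \le |x|\,\mathbf{Var}(\mathcal{N}_n[\varphi-\psi_k])^{1/2} \lesssim |x|\,\|\varphi-\psi_k\|_X,
\]
with the implied constant independent of $n$; this is precisely the three-line computation already displayed in the statement.

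Next I would feed this into the triangle inequality
\[
\bigl|Z_n[\varphi](x) - e^{-x^2V(\varphi)/2}\bigr| \le \bigl|Z_n[\varphi](x)-Z_n[\psi_k](x)\bigr| + \bigl|Z_n[\psi_k](x)-e^{-x^2V(\psi_k)/2}\bigr| + \bigl|e^{-x^2V(\psi_k)/2}-e^{-x^2V(\varphi)/2}\bigr|,
\]
and estimate the three terms separately. Given $\delta>0$: the first term is $\le\delta/3$ uniformly in $n$ once $k$ is taken large, by the display above; with $k$ now fixed, the second term tends to $0$ as $n\to\infty$ by the hypothesis on elements of $Y$; and the third term is $\le\delta/3$ for $k$ large by the assumed continuity of the quadratic functional $V$ on $X$ together with continuity of $t\mapsto e^{-x^2t/2}$. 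Letting $n\to\infty$ and then $\delta\to0$ yields $Z_n[\varphi](x)\to e^{-x^2V(\varphi)/2}$ for every $x\in\mathbb{R}$.

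Finally, $x\mapsto e^{-x^2V(\varphi)/2}$ is the characteristic function of a centred normal random variable with variance $V(\varphi)\ge0$ (nonnegativity of $V$ following since it is a pointwise limit of variances), so Lévy's continuity theorem promotes the pointwise convergence of characteristic functions to convergence in distribution of $\mathcal{N}_n^\circ[\varphi]$ to that Gaussian, which is the asserted central limit theorem. I expect the only point requiring genuine care to be the uniformity in $n$ of the first estimate: one must check that the variance constant does not degenerate along the approximating sequence, and this is exactly why the hypothesis demands $C(\cdot)$ bounded on compact sets — once $\|\psi_k\|_X$ and $\|\varphi-\psi_k\|_X$ are controlled, the bound is uniform in $n$, and everything else is routine bookkeeping.
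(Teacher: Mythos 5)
Your proposal is correct and follows essentially the same route as the paper: the same three-line estimate controlling $|Z_n[\psi_k](x)-Z_n[\varphi](x)|$ uniformly in $n$ via the variance bound and the boundedness of $C(\cdot)$ on compacts, followed by a triangle-inequality/density argument, continuity of $V$, and L\'evy's continuity theorem. You merely spell out the $\delta/3$ bookkeeping that the paper leaves implicit.
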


In her proof of the CLT for linear statistics for $H^{3/2+\epsilon}$ functions \cite{shcherbina}, M. Shcherbina used approximation by functions of the form: 
\[P_\eta \ast f(x)=\frac{1}{\pi}\int_{\mathbb{R}} \frac{\eta}{(x-t)^2+\eta^2}\, f(t)\,\mathrm{d}t,\] where $P_\eta$ is the Poisson kernel
\[P_y(x)=\frac{1}{\pi}\frac{\eta}{\eta^2+x^2}, \quad \eta >0.\]
When $f\in L^p(\mathbb{R})$ for $1\le p \le \infty$, $P_\eta\ast f$ is real analytic, so the validity of the CLT for such functions follows from work anterior to \cite{shcherbina}, see \cite{sinaisoshnikov1}. However, since $P_\eta(x) =\Im \frac{1}{x+i\eta}$, the CLT for Poisson integrals has a direct proof based on a calculation involving resolvents (see the proof of Theorem 1 in \cite{shcherbina}). 

We use the following standard approximation results for Poisson integrals:
\begin{lemma} \label{lem: hsapprox}
Let $f\in H^s(\mathbb{R})$, for $0\le s <\infty$. Then the Poisson integral $P_\eta\ast f$ converges in the $H^s$ norm to $f$ for $\eta\rightarrow 0$.
\end{lemma}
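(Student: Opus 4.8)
The plan is to move to the Fourier side, where $P_\eta\ast\,\cdot\,$ acts as a Fourier multiplier, and then apply dominated convergence. First I would record that, with the Fourier normalization used in the definition of $\|\cdot\|_{H^s}$ above, the Poisson kernel has Fourier transform $\widehat{P_\eta}(\xi)=e^{-\eta|\xi|}$ (up to the usual constant in the exponent, depending on convention), so that convolution becomes multiplication: $\widehat{P_\eta\ast f}(\xi)=e^{-\eta|\xi|}\widehat f(\xi)$. Since $f\in H^s(\mathbb{R})\subset L^2(\mathbb{R})$ and $P_\eta\in L^1(\mathbb{R})$, the convolution is well defined, and by Plancherel together with the definition of the inhomogeneous Sobolev norm,
\[
\|P_\eta\ast f-f\|_{H^s}^2=\int_{\mathbb{R}}(1+|\xi|)^{2s}\,\bigl|e^{-\eta|\xi|}-1\bigr|^2\,|\widehat f(\xi)|^2\,\mathrm{d}\xi.
\]

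Next I would bound the integrand uniformly in $\eta$. For every fixed $\xi$ one has $|e^{-\eta|\xi|}-1|^2\to 0$ as $\eta\downarrow 0$, and since $0\le e^{-\eta|\xi|}\le 1$ for all $\eta\ge 0$ we have the crude bound $|e^{-\eta|\xi|}-1|^2\le 1$. Hence the integrand is dominated, uniformly in $\eta$, by $(1+|\xi|)^{2s}|\widehat f(\xi)|^2$, which lies in $L^1(\mathbb{R})$ precisely because $f\in H^s(\mathbb{R})$. The dominated convergence theorem then yields $\|P_\eta\ast f-f\|_{H^s}\to 0$ as $\eta\to 0$, which is the assertion. (For $s=0$ this reduces to the familiar statement that $\{P_\eta\}$ is an approximate identity on $L^2$.)

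There is essentially no obstacle here: the only points that deserve a line are the identification of the multiplier $e^{-\eta|\xi|}$ --- consistent with the relation $P_\eta(x)=\Im\frac{1}{x+i\eta}$ recorded above --- and the observation that the dominating function is independent of $\eta$. For the approximation arguments used later it is worth noting that the same computation gives a quantitative rate: from the elementary inequality $|e^{-t}-1|\le t^{\delta}$, valid for $t\ge 0$ and $0\le\delta\le 1$, one obtains $\|P_\eta\ast f-f\|_{H^{s-\delta}}\le \eta^{\delta}\|f\|_{H^s}$ for any $0\le\delta\le\min(s,1)$, so that regularity can be traded for a convergence rate whenever that is needed.
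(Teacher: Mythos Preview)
Your proof is correct and follows essentially the same approach as the paper: identify $\widehat{P_\eta\ast f}(\xi)=e^{-\eta|\xi|}\widehat f(\xi)$, write $\|P_\eta\ast f-f\|_{H^s}^2$ as an integral with the factor $(1-e^{-\eta|\xi|})^2$, and apply dominated convergence using $(1+|\xi|)^{2s}|\widehat f(\xi)|^2$ as the dominating function. Your additional remark on the quantitative rate is a nice bonus but is not used in the paper's argument.
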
 
\begin{lemma} \label{lem: lipapprox}
Let $f \in C^{\alpha}(\mathbb{R})$ for some $0<\alpha <1$, and $\beta>0$ be less than $\alpha$. The Poisson integral $P_\eta \ast f$ converges to $f$ in the $C^\beta$ norm for $\eta\rightarrow 0$.
\end{lemma}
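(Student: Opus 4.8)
The plan is to exploit two elementary facts about convolution with the Poisson kernel: first, $P_\eta$ is a probability density concentrating at the origin as $\eta\to 0$, so that $P_\eta\ast f \to f$ in sup norm at a rate governed by the modulus of continuity of $f$; second, convolution with a probability measure does not enlarge the H\"older seminorm. Writing $g_\eta := P_\eta\ast f - f$, I would obtain the $C^\beta$ convergence by interpolating between the quantitative $L^\infty$ decay of $g_\eta$ and the uniform (but non-decaying) bound on $[g_\eta]_{C^\alpha}$.

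First I would record the sup-norm estimate. From $(P_\eta\ast f)(x)-f(x)=\int_{\mathbb{R}}P_\eta(t)(f(x-t)-f(x))\,\mathrm{d}t$ and $|f(x-t)-f(x)|\le [f]_{C^\alpha}|t|^\alpha$ one gets
\[\|g_\eta\|_{L^\infty}\le [f]_{C^\alpha}\int_{\mathbb{R}}P_\eta(t)|t|^\alpha\,\mathrm{d}t = c_\alpha\,[f]_{C^\alpha}\,\eta^\alpha, \qquad c_\alpha=\frac{1}{\pi}\int_{\mathbb{R}}\frac{|s|^\alpha}{1+s^2}\,\mathrm{d}s,\]
where $c_\alpha<\infty$ precisely because $\alpha<1$. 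Next, from $|(P_\eta\ast f)(x)-(P_\eta\ast f)(y)|\le \int_{\mathbb{R}} P_\eta(t)|f(x-t)-f(y-t)|\,\mathrm{d}t\le [f]_{C^\alpha}|x-y|^\alpha$ I conclude $[P_\eta\ast f]_{C^\alpha}\le [f]_{C^\alpha}$, hence $[g_\eta]_{C^\alpha}\le 2[f]_{C^\alpha}$ uniformly in $\eta$.

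I would then apply the standard interpolation inequality for H\"older seminorms: for any bounded $g$ and $0<\beta<\alpha<1$,
\[[g]_{C^\beta}\le C_{\alpha,\beta}\,\|g\|_{L^\infty}^{1-\beta/\alpha}\,[g]_{C^\alpha}^{\beta/\alpha},\]
which follows by splitting the difference quotient $|g(x)-g(y)|/|x-y|^\beta$ according to whether $|x-y|$ is below or above the threshold $\delta=(\|g\|_{L^\infty}/[g]_{C^\alpha})^{1/\alpha}$ and optimizing. Applied to $g_\eta$ this gives $[g_\eta]_{C^\beta}\le C\,[f]_{C^\alpha}\,\eta^{\alpha-\beta}$, and combining with the sup-norm bound,
\[\|P_\eta\ast f - f\|_{C^\beta} = \|g_\eta\|_{L^\infty}+[g_\eta]_{C^\beta}\le C\,[f]_{C^\alpha}\bigl(\eta^\alpha+\eta^{\alpha-\beta}\bigr)\xrightarrow[\eta\to 0]{}0.\]

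There is no serious obstacle in this argument; the only points requiring a little care are verifying that $c_\alpha$ is finite (which is exactly where the hypothesis $\alpha<1$ is used) and accepting the loss of a power $\eta^{\alpha-\beta}$ rather than $\eta^\alpha$ in the seminorm, which is why one must take $\beta$ strictly below $\alpha$. One mild subtlety worth flagging is that $f$ is only assumed bounded, not decaying at infinity; but since $P_\eta\in L^1(\mathbb{R})$ and all the estimates above are pointwise in $x$ and uniform, this causes no difficulty.
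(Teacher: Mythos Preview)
Your argument is correct and in fact more self-contained than the paper's. You and the paper share the first step (the $L^\infty$ bound $\|g_\eta\|_{L^\infty}\lesssim [f]_{C^\alpha}\eta^\alpha$), but diverge on the seminorm. The paper invokes Stein's characterization of $C^\alpha$ via the Poisson semigroup, namely that $[h]_{C^\alpha}$ is equivalent to the least $M$ with $\|\partial_\eta P_\eta\ast h\|_\infty\le M\eta^{-1+\alpha}$, and from this reads off $[P_\eta\ast f - f]_{C^\beta}\lesssim \eta^{1-\beta}\|\partial_\eta P_\eta\ast f\|_\infty\lesssim \eta^{\alpha-\beta}[f]_{C^\alpha}$. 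You instead observe that convolution with the probability measure $P_\eta$ does not increase the $C^\alpha$ seminorm, giving a uniform bound $[g_\eta]_{C^\alpha}\le 2[f]_{C^\alpha}$, and then interpolate directly between $L^\infty$ and $C^\alpha$ to land in $C^\beta$.

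Both routes yield the same rate $\eta^{\alpha-\beta}$. Your approach has the advantage of being entirely elementary: it does not require the reader to know (or look up) the Poisson-derivative characterization of H\"older spaces, and the interpolation inequality you use is a two-line exercise. The paper's approach, on the other hand, ties in naturally with the harmonic-analysis machinery (Littlewood--Paley, Poisson extensions) that the paper develops and uses in later sections, so in context it is a reasonable choice even if it is less direct for this particular lemma.
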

The first result is a simple consequence of the explicit form of the Fourier transform of $P_\eta \ast f$:
\[\widehat{P_\eta\ast f}(\xi)=e^{-\eta|\xi|}\widehat{f}(\xi).\]
From this, we have:
\[\|P_\eta\ast f-f\|^2_{H^s}=\int(1+|\xi|)^{2s}(1-e^{-\eta|\xi|})^2|\widehat{f}(\xi)|^2\,\mathrm{d}\xi,\]
and the Lemma \ref{lem: hsapprox} follows from the dominated convergence theorem.

We indicate how to derive Lemma \ref{lem: lipapprox}. We begin with the $L^\infty$ part of the norm:
\begin{align*}
|(P_\eta\ast f)(x)-f(x)|&\le \int_{\mathbb{R}}|P_\eta(t)|\cdot |f(x-t)-f(x)|\,\mathrm{d}t \\
&\le \int_{\mathbb{R}} |P_\eta(t)||t|^\alpha\,\mathrm{d}t.\\
&\lesssim \eta^\alpha,
\end{align*}
so $P_\eta\ast f$ converges uniformly to $f$ as $\eta$ tends to 0.
The convergence of the seminorm $[P_y\ast f]_\alpha$ can be proved using Littlewood-Paley techniques, or using the characterization found in \cite[Section V.4]{stein}; for $f\in C^\alpha$, the seminorm $[f]_\alpha$ is equivalent to the least $M$ such that
\[\|\partial_\eta P_\eta\ast f\|_\infty \le M\eta^{-1+\alpha}.\]
Thus 
\begin{align*}
[P_\eta\ast f - f]_\beta &\lesssim \eta^{1-\beta}\|\partial_\eta P_\eta\ast f\|_{\infty}\\
&\lesssim \eta^{\alpha-\beta}[f]_\alpha.
\end{align*}
The convergence follows from this.

\section{Littlewood-Paley Theory}
\label{sec: lp}
In this section, we derive a representation for the variance of $\mathcal{N}_n[\varphi]$ based on a decomposition of $\varphi$ of Littlewood-Paley type. 
\begin{theorem}\label{thm: representation} Let $0\le s<\infty$ and $0<\beta<\alpha<1$. Then if $\varphi \in H^s$ or $\varphi \in C^\alpha$, $\varphi$ has a representation:
\[\varphi(x) = \sum_{k=-1}^\infty (P_{2^{-k}}\ast g_k)(x),\]
where the sum on the right is convergent in $H^s$ if $\varphi \in H^s$, and in $C^\beta$ if $\varphi \in C^\alpha$. The functions $g_k$, $-1\le k <\infty$ are smooth and such that, for some constants $C_s$ and $C_{\alpha,\beta}$:
\[\sum_{k=-1}^\infty 2^{2ks}\|g_k\|^2_{L^2} \le C_s\|\varphi\|^2_{H^s} \]
and
\[ \sum_{k=-1}^\infty 2^{\beta k}\|g_k\|_{L^\infty} \le C_{\alpha-\beta}\|\varphi\|_{C^\alpha}.\]
\end{theorem}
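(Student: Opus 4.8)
The plan is to obtain the representation from a standard inhomogeneous Littlewood--Paley decomposition, the only twist being that each dyadic block is realized as a Poisson integral rather than as a convolution against a Schwartz kernel. Fix $\phi\in C_c^\infty(\mathbb{R})$ with $\phi\equiv 1$ on $\{|\xi|\le 1\}$ and $\phi\equiv 0$ on $\{|\xi|\ge 2\}$; set $\psi_{-1}=\phi$ and, for $k\ge 0$, $\psi_k(\xi)=\phi(2^{-k-1}\xi)-\phi(2^{-k}\xi)=\Psi(2^{-k}\xi)$, where $\Psi(\zeta)=\phi(\zeta/2)-\phi(\zeta)$ is smooth, supported in the annulus $\{1\le|\zeta|\le 4\}$ (in particular away from the origin), so that $\sum_{k\ge-1}\psi_k\equiv 1$ with boundedly overlapping supports. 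Writing $\widehat{\Delta_k\varphi}=\psi_k\widehat{\varphi}$, I would define $g_k$ by the ``deconvolution''
\[\widehat{g_k}(\xi)=e^{2^{-k}|\xi|}\psi_k(\xi)\,\widehat{\varphi}(\xi)=:m_k(\xi)\widehat{\varphi}(\xi),\]
so that $\widehat{P_{2^{-k}}\ast g_k}=e^{-2^{-k}|\xi|}\widehat{g_k}=\psi_k\widehat{\varphi}=\widehat{\Delta_k\varphi}$, i.e.\ $P_{2^{-k}}\ast g_k=\Delta_k\varphi$. For $k\ge 0$ one has $m_k(\xi)=\mu(2^{-k}\xi)$ with $\mu(\zeta)=e^{|\zeta|}\Psi(\zeta)$ a \emph{fixed} Schwartz function (smooth because $\Psi$ is supported away from the origin), whence $g_k=K_k\ast\varphi$ with $K_k=\mathcal{F}^{-1}m_k=2^k\check{\mu}(2^k\cdot)$ and $\|K_k\|_{L^1}=\|\check{\mu}\|_{L^1}$ uniformly in $k$; moreover $m_k\widehat{\varphi}$ is a compactly supported distribution, so $g_k\in C^\infty$ by Paley--Wiener--Schwartz. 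The block $k=-1$ is the same with $m_{-1}=e^{2|\xi|}\phi$, which has a corner at $0$ but is Lipschitz with compact support, so $K_{-1}\in L^1$ and $g_{-1}\in C^\infty$ as well.

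For the $H^s$ bound, since $|m_k|\le\|\mu\|_\infty$ on $A_k:=\operatorname{supp}\psi_k$ and $2^{2ks}\le(1+|\xi|)^{2s}$ there (on $A_k$, $|\xi|\ge 2^k$ for $k\ge 0$, and trivially for $k=-1$),
\[\sum_{k\ge-1}2^{2ks}\|g_k\|_{L^2}^2=\sum_{k\ge-1}2^{2ks}\int|m_k|^2|\widehat{\varphi}|^2\,\mathrm{d}\xi\le\|\mu\|_\infty^2\sum_{k\ge-1}\int_{A_k}(1+|\xi|)^{2s}|\widehat{\varphi}|^2\,\mathrm{d}\xi\le C_s\|\varphi\|_{H^s}^2,\]
using the bounded overlap of the $A_k$. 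Convergence in $H^s$ is immediate: the partial sums telescope, $\sum_{k=-1}^N P_{2^{-k}}\ast g_k=\mathcal{F}^{-1}\bigl(\phi(2^{-N-1}\cdot)\widehat{\varphi}\bigr)$, so that $\|\varphi-\sum_{k\le N}P_{2^{-k}}\ast g_k\|_{H^s}^2=\int(1-\phi(2^{-N-1}\xi))^2(1+|\xi|)^{2s}|\widehat{\varphi}|^2\,\mathrm{d}\xi\to0$ by dominated convergence, exactly as in Lemma \ref{lem: hsapprox}.

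For the Hölder bound the naive estimate $\|g_k\|_{L^\infty}\le\|K_k\|_{L^1}\|\varphi\|_{L^\infty}$ is useless, since it is not summable against $2^{\beta k}$; one must use the frequency localization. Choosing $\widetilde{\Psi}\in C_c^\infty$ with $\widetilde{\Psi}\Psi=\Psi$ and $\widetilde{\psi}_k=\widetilde{\Psi}(2^{-k}\cdot)$, we have $m_k\widehat{\varphi}=m_k\widetilde{\psi}_k\widehat{\varphi}$, hence $g_k=K_k\ast\widetilde{\Delta}_k\varphi$ with $\widehat{\widetilde{\Delta}_k\varphi}=\widetilde{\psi}_k\widehat{\varphi}$, so that $\|g_k\|_{L^\infty}\le\|K_k\|_{L^1}\|\widetilde{\Delta}_k\varphi\|_{L^\infty}\le C\|\widetilde{\Delta}_k\varphi\|_{L^\infty}$. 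I would then invoke the characterization of $C^\alpha$ ($0<\alpha<1$) as the Besov space $B^\alpha_{\infty,\infty}$, namely $\|\widetilde{\Delta}_k\varphi\|_{L^\infty}\le C\,2^{-k\alpha}[\varphi]_{C^\alpha}$ for $k\ge 0$ and $\|\widetilde{\Delta}_{-1}\varphi\|_{L^\infty}\le C\|\varphi\|_{L^\infty}$ (provable directly: for $k\ge 0$ the kernel of $\widetilde{\Delta}_k$ has vanishing integral, so one pairs it against $\varphi(x-y)-\varphi(x)$ and uses the scaling of its moments; or one simply quotes \cite[Ch.\ V]{stein}). Since $\beta<\alpha$,
\[\sum_{k\ge-1}2^{\beta k}\|g_k\|_{L^\infty}\le C\|\varphi\|_{C^\alpha}\Bigl(1+\sum_{k\ge 0}2^{(\beta-\alpha)k}\Bigr)\le C_{\alpha-\beta}\|\varphi\|_{C^\alpha}.\]
Convergence in $C^\beta$ follows the same way: the tail $\sum_{k>N}\Delta_k\varphi$ has $C^\beta$ norm $\lesssim\sum_{k>N}2^{\beta k}\|\Delta_k\varphi\|_{L^\infty}\lesssim[\varphi]_{C^\alpha}\sum_{k>N}2^{(\beta-\alpha)k}\to0$, again by the Besov characterization; this is essentially the content of Lemma \ref{lem: lipapprox}.

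The main obstacle, and the only place genuine work is hidden, is the Hölder/Besov equivalence $\|\widetilde{\Delta}_k\varphi\|_{L^\infty}\sim 2^{-k\alpha}[\varphi]_{C^\alpha}$ together with the verification that the ``deconvolution'' $\Delta_k\varphi\mapsto g_k$ — multiplication by $e^{2^{-k}|\xi|}$ on the Fourier side — does not spoil this estimate. The resolution is that on the relevant annulus $|\xi|\sim 2^k$ the amplification factor is $O(1)$, and in fact $\Delta_k\varphi\mapsto g_k$ is convolution against the rescaled fixed Schwartz kernel $K_k$, whose $L^1$ norm is independent of $k$. The low-frequency block $k=-1$, where $e^{2|\xi|}$ fails to be $C^1$ at the origin, is a cosmetic nuisance rather than a real difficulty, absorbed by the uniform bound and a one-line estimate on $K_{-1}$.
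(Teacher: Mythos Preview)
Your proof is correct and follows essentially the same route as the paper: define $g_k$ by multiplying $\widehat{\varphi_k}$ by $e^{2^{-k}|\xi|}$, observe that this multiplier on the dyadic annulus is a rescaling of a fixed Schwartz function, so the associated convolution kernel has $L^1$ norm uniform in $k$, and then invoke the Besov characterizations $H^s=B^s_{2,2}$, $C^\alpha=B^\alpha_{\infty,\infty}$ (the paper's Theorem~\ref{thm: lpcharacterization}). Your treatment of the low-frequency block via the observation that $e^{2|\xi|}\phi(\xi)$ is Lipschitz with compact support (hence $K_{-1}\in L^1$) is arguably cleaner than the paper's, which bounds $\|g_{-1}\|_{L^\infty}$ by $\|\varphi_{-1}\|_{L^2}$ via Bernstein and so tacitly requires $\varphi_{-1}\in L^2$.
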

\begin{corollary}
Let $\varphi \in \mathcal{S}(\mathbb{R})$, the class of Schwartz functions, i.e. $C^\infty$ functions on $\mathbb{R}$ such that $\sup_{x\in \mathbb{R}} |x^mf^{(k)}(x)|\le C_{mk}<\infty$ for $m,k<\infty$. The variance of $\mathcal{N}_n[\varphi]$ can be written as
\begin{equation}\label{eq: equality}
\mathbf{Var}(\mathcal{N}_n[\varphi]) = \sum_{k,l=-1}^\infty\int_{\mathbb{R}}\int_{\mathbb{R}}g_k(t)\overline{g_l(s)}\mathbf{Cov}(\mathcal{N}_n[P_{2^{-k}}(\cdot - t)],\mathcal{N}_n[P_{2^{-l}}(\cdot -s)])\,\mathrm{d}t\mathrm{d}s.
\end{equation}
The sum on the right side of (\ref{eq: equality}) is absolutely convergent for $\varphi \in \mathcal{S}$.
\end{corollary}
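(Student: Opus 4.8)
The plan is to establish the identity (\ref{eq: equality}) and its absolute convergence simultaneously, by truncating the Littlewood--Paley expansion of $\varphi$ and letting the truncation tend to infinity. Write $\varphi_k := P_{2^{-k}}\ast g_k$ and $S_K := \sum_{k=-1}^{K}\varphi_k$. Since $\varphi\in\mathcal{S}$ belongs to $C^{\alpha}$ for every $0<\alpha<1$, Theorem \ref{thm: representation} gives $S_K\to\varphi$ in $C^{\beta}$ for $0<\beta<\alpha<1$, hence $\|S_K-\varphi\|_{L^{\infty}}\to0$; moreover, taking for instance $\beta=1/2$, $\alpha=3/4$,
\[
A:=\sum_{k=-1}^{\infty}\|g_k\|_{L^{\infty}}\ \le\ \sqrt{2}\sum_{k=-1}^{\infty}2^{k/2}\|g_k\|_{L^{\infty}}\ \le\ \sqrt{2}\,C\,\|\varphi\|_{C^{3/4}}\ <\ \infty,
\]
with $C$ the constant of Theorem \ref{thm: representation}. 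Two elementary remarks, valid for fixed $n$, underpin everything. First, $f\mapsto\mathcal{N}_n[f]$ is linear with $|\mathcal{N}_n[f]|\le n\|f\|_{L^{\infty}}$, so $f\mapsto\mathbf{Var}(\mathcal{N}_n[f])^{1/2}=\|\mathcal{N}_n^{\circ}[f]\|_{L^2}$ is a seminorm bounded by $n\|f\|_{L^{\infty}}$. Second, the Poisson kernel is nonnegative with $\int_{\mathbb{R}}P_{\eta}(x-t)\,\mathrm{d}t=1$, so that $\mathcal{N}_n[P_{\eta}(\cdot-t)]=\tfrac{1}{\pi}\Im\mathrm{Tr}\,G(t+i\eta)\ge0$ and, for every realisation of the matrix,
\[
\int_{\mathbb{R}}|g_k(t)|\,\bigl|\mathcal{N}_n[P_{2^{-k}}(\cdot-t)]\bigr|\,\mathrm{d}t\ \le\ \|g_k\|_{L^{\infty}}\int_{\mathbb{R}}\mathcal{N}_n[P_{2^{-k}}(\cdot-t)]\,\mathrm{d}t\ =\ n\|g_k\|_{L^{\infty}}.
\]
This last bound is what allows us to work with $g_k\in L^{\infty}$ alone, without assuming $g_k\in L^1$.

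First I would note that, for each realisation and each $k$, $\mathcal{N}_n[\varphi_k]=\int_{\mathbb{R}}g_k(t)\,\mathcal{N}_n[P_{2^{-k}}(\cdot-t)]\,\mathrm{d}t$: this is the interchange of the \emph{finite} sum $\sum_{j=1}^{n}$ over eigenvalues with the $t$-integral, legitimate since $\int|g_k(t)|P_{2^{-k}}(\lambda_j-t)\,\mathrm{d}t\le\|g_k\|_{L^{\infty}}$ for each $j$. Since $\mathbf{Var}(\mathcal{N}_n[\cdot])^{1/2}$ is a seminorm and $\|S_K-\varphi\|_{L^{\infty}}\to0$, we get $\mathbf{Var}(\mathcal{N}_n[S_K])\to\mathbf{Var}(\mathcal{N}_n[\varphi])$. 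By bilinearity of the covariance for the finite sum $S_K$,
\[
\mathbf{Var}(\mathcal{N}_n[S_K])=\sum_{k,l=-1}^{K}\mathbf{Cov}\bigl(\mathcal{N}_n[\varphi_k],\mathcal{N}_n[\varphi_l]\bigr).
\]
For fixed $k,l$ I would substitute the integral representation of $\mathcal{N}_n[\varphi_k]$ and $\mathcal{N}_n[\varphi_l]$ into $\mathbf{Cov}(X,Y)=\mathbf{E}[X\overline{Y}]-\mathbf{E}[X]\,\overline{\mathbf{E}[Y]}$ and apply Fubini. The required domination, $\mathbf{E}\!\int\!\!\int|g_k(t)||g_l(s)|\,|\mathcal{N}_n[P_{2^{-k}}(\cdot-t)]|\,|\mathcal{N}_n[P_{2^{-l}}(\cdot-s)]|\,\mathrm{d}t\,\mathrm{d}s\le n^2\|g_k\|_{L^{\infty}}\|g_l\|_{L^{\infty}}$, follows by factoring the $(t,s)$ integral and invoking the pathwise bound above; the analogous estimate handles the product of expectations. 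This yields, for each $k,l$,
\[
\mathbf{Cov}\bigl(\mathcal{N}_n[\varphi_k],\mathcal{N}_n[\varphi_l]\bigr)=\int_{\mathbb{R}}\!\int_{\mathbb{R}}g_k(t)\overline{g_l(s)}\,\mathbf{Cov}\bigl(\mathcal{N}_n[P_{2^{-k}}(\cdot-t)],\mathcal{N}_n[P_{2^{-l}}(\cdot-s)]\bigr)\,\mathrm{d}t\,\mathrm{d}s,
\]
so the partial sum $\sum_{k,l=-1}^{K}$ of the right-hand side of (\ref{eq: equality}) equals $\mathbf{Var}(\mathcal{N}_n[S_K])$.

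It remains to let $K\to\infty$ and to obtain absolute convergence. By Cauchy--Schwarz for the covariance, the first remark above, and $\|P_{\eta}\ast g\|_{L^{\infty}}\le\|g\|_{L^{\infty}}$,
\[
\Bigl|\mathbf{Cov}\bigl(\mathcal{N}_n[\varphi_k],\mathcal{N}_n[\varphi_l]\bigr)\Bigr|\ \le\ \mathbf{Var}(\mathcal{N}_n[\varphi_k])^{1/2}\,\mathbf{Var}(\mathcal{N}_n[\varphi_l])^{1/2}\ \le\ n^2\|g_k\|_{L^{\infty}}\|g_l\|_{L^{\infty}},
\]
and by the displayed identity this also bounds the absolute value of the $(k,l)$ term of (\ref{eq: equality}). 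Since $\sum_{k,l\ge-1}n^2\|g_k\|_{L^{\infty}}\|g_l\|_{L^{\infty}}=n^2A^2<\infty$, the double series in (\ref{eq: equality}) converges absolutely; its sum is therefore the limit of the partial sums $\sum_{k,l=-1}^{K}$, which we have just identified with $\mathbf{Var}(\mathcal{N}_n[S_K])$, and the latter converges to $\mathbf{Var}(\mathcal{N}_n[\varphi])$. The only genuine obstacle here is justifying the interchanges of expectation, summation and integration; the key point is to dominate $\mathcal{N}_n[P_{2^{-k}}(\cdot-t)]$ in the variable $t$ by exploiting the positivity of the Poisson kernel and $\int_{\mathbb{R}}P_{\eta}=1$, which removes any need for $L^1$ bounds on the $g_k$, after which only the summability of $\|g_k\|_{L^{\infty}}$ furnished by Theorem \ref{thm: representation} is used.
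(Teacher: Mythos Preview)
Your argument is correct and follows essentially the same route as the paper's: expand the variance bilinearly over the Littlewood--Paley pieces $\varphi_k=P_{2^{-k}}\ast g_k$ and unfold each covariance as a double integral in $t,s$. The paper carries this out formally in three displayed lines, whereas you supply the missing justifications---in particular the positivity trick $\int_{\mathbb R}\mathcal N_n[P_\eta(\cdot-t)]\,\mathrm dt=n$, which is exactly what is needed to run Fubini with only $g_k\in L^\infty$ and to obtain the stated absolute convergence.
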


As announced above, the main tool in the proof of Theorem \ref{thm: representation} will be Littlewood-Paley theory. A good general reference on this is \cite[Ch. 2]{bahourietal}. The dyadic frequency decomposition was introduced in the context of Fourier series by J.E. Littlewood and R.E.A.C. Paley \cite{littlewoodp1}, \cite{littlewoodp2}, \cite{littlewoodp3}. E. M. Stein developed extensions of the theory to functions on higher dimensional spaces $\mathbb{R}^d$ using various \emph{Littlewood-Paley functions} based on the Poisson kernel (see \cite[Chapter IV]{stein}). Here we will use the smooth dyadic frequency decomposition presented in \cite{bahourietal} (see also \cite{chemin}), and the description of the functions spaces $C^\alpha$ and $H^s$ in terms of this decomposition. We denote by
\[\widehat f(\xi) =\int e^{-i\xi x}f(x)\,\mathrm{d}x\]
the Fourier transform of $f\in \mathcal{S}'$, the space of tempered distributions.

Given a function $\varphi \in \mathcal{S}$, its \emph{inhomogeneous Littlewood-Paley decomposition} is the sum 
\[\varphi = \sum_{k\ge -1} \varphi_k,\]
with
\begin{align*}
\varphi_{-1}&=h\ast\varphi\\
\varphi_{k}&=2^k\omega(2^k\cdot)\ast\varphi, \quad k\ge 0,
\end{align*}
where $0\le \widehat h \le 1$ and $0\le \widehat \omega \le 1$ are even functions with compact support in the ball $\{|\xi|\le 3/4\}$ and the ``annulus'' $\mathcal{C}= \{3/4 \le |\xi| \le 8/3\}$, respectively, such that:
\begin{align*}
\widehat{h}(\xi)+ \sum_{k\ge 0}\widehat{\omega}(2^{-k}\xi)&=1\\
0\le \widehat{h}(\xi)^2+\sum_{k\ge 0}\widehat{\omega}^2(2^{-k}\xi)&\le 1.
\end{align*}
for $\xi\in \mathbb{R}$.
Note that $\varphi_k$ is frequency-localized in an annulus of size $2^k$, i.e.
\[\operatorname{supp} \,\widehat{\varphi_k} \subset 2^{k}\mathcal{C}.\]
The first component $\varphi_{-1}$ has its Fourier support contained in a ball \[\operatorname{supp} \,\widehat{\varphi_{-1}} \subset \{ |\xi|\le 3/4\}.\] For the existence of this decomposition and the corresponding dyadic partition of frequency space, see \cite{bahourietal}, Proposition 2.10.

The following theorem characterizes $H^s(\mathbb{R})$ and $C^\alpha(\mathbb{R})$ in terms of the Littlewood-Paley decomposition (see \cite{bahourietal}, Section 2.7):
\begin{theorem}\label{thm: lpcharacterization}
The inhomogeneous Sobolev space $H^s(\mathbb{R})$ consists of those tempered distributions such that
\[\|\varphi\|^2_{B^s_{2,2}}:= \sum_k 2^{2ks}\|\varphi_k\|_{L^2}^2 <\infty.\]
Moreover, the norms $\|\cdot\|_{B^s_{2,2}}$ and $\|\cdot\|_{H^s}$ are equivalent.
The space  $C^\alpha(\mathbb{R})$ of $\alpha$-H\"older continuous functions is the space of distributions such
\[\|\varphi\|_{B^\alpha_{\infty,\infty}}:= \sup_{k} 2^{\alpha k}\|\varphi_k\|_{L^\infty}<\infty.\]
The norms $\|\cdot\|_{B^\alpha_{\infty,\infty}}$ and $\|\cdot\|_{C^\alpha}$ are equivalent.
\end{theorem}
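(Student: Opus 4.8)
The plan is to derive both equivalences from three structural facts about the inhomogeneous Littlewood--Paley decomposition, which I would record at the outset. First, \emph{reconstruction}: since $\widehat h(\xi)+\sum_{k\ge 0}\widehat\omega(2^{-k}\xi)=1$ pointwise, with the sum locally finite and uniformly bounded, one has $\widehat\varphi=\widehat h\,\widehat\varphi+\sum_k\widehat\omega(2^{-k}\cdot)\widehat\varphi$ in $\mathcal S'$, so $\varphi=\sum_{k\ge-1}\varphi_k$ converges in $\mathcal S'$ for every $\varphi\in\mathcal S'$. Second, \emph{frequency localization and bounded overlap}: $\operatorname{supp}\widehat{\varphi_{-1}}\subset\{|\xi|\le 3/4\}$ and $\operatorname{supp}\widehat{\varphi_k}\subset 2^k\mathcal C$ for $k\ge 0$; in particular $1+|\xi|\asymp 2^k$ on $\operatorname{supp}\widehat{\varphi_k}$ (with absolute constants, for $k\ge0$), and each $\xi$ lies in at most a fixed number $N_0$ of the dilated annuli $2^k\mathcal C$. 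Third, \emph{coercivity of the quadratic partition}: from $\widehat h+\sum_k\widehat\omega(2^{-k}\cdot)=1$ and the bounded overlap, Cauchy--Schwarz gives $\widehat h(\xi)^2+\sum_k\widehat\omega^2(2^{-k}\xi)\ge c_0>0$ for all $\xi$, which together with the stated bound $\le 1$ makes this quadratic sum comparable to the constant $1$. I would also record the Bernstein estimate $\|\varphi_k'\|_{L^\infty}\lesssim 2^k\|\varphi_k\|_{L^\infty}$ (and likewise in $L^2$): choosing $\widetilde\omega\in\mathcal S$ with $\widehat{\widetilde\omega}\in C_c^\infty$ equal to $1$ on a neighbourhood of $\mathcal C$ and setting $\widetilde\psi_k(x)=2^k\widetilde\omega(2^k x)$, one has $\varphi_k=\widetilde\psi_k\ast\varphi_k$, hence $\varphi_k'=\widetilde\psi_k'\ast\varphi_k$, and Young's inequality with $\|\widetilde\psi_k'\|_{L^1}=2^k\|\widetilde\omega'\|_{L^1}$ gives the claim (with the obvious modification for $k=-1$, using a fixed cutoff that is $1$ on $\{|\xi|\le 3/4\}$). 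This is where the support hypotheses on $h$ and $\omega$ are genuinely used.

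For the Sobolev equivalence I would argue by Plancherel. Since $\widehat{\varphi_k}(\xi)=\widehat\omega(2^{-k}\xi)\widehat\varphi(\xi)$ and $0\le\widehat\omega\le 1$, and $(1+|\xi|)^{2s}\asymp 2^{2ks}$ on $2^k\mathcal C$ (constants depending only on $s$), we get $2^{2ks}\|\varphi_k\|_{L^2}^2\lesssim\int_{2^k\mathcal C}(1+|\xi|)^{2s}|\widehat\varphi(\xi)|^2\,d\xi$; summing in $k$ and using the bounded overlap gives $\|\varphi\|_{B^s_{2,2}}^2\lesssim\|\varphi\|_{H^s}^2$. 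Conversely, bounding $1\le c_0^{-1}\bigl(\widehat h(\xi)^2+\sum_k\widehat\omega^2(2^{-k}\xi)\bigr)$ inside $\int(1+|\xi|)^{2s}|\widehat\varphi|^2\,d\xi$, splitting the resulting sum, and again using $(1+|\xi|)^{2s}\asymp 2^{2ks}$ on each piece yields $\|\varphi\|_{H^s}^2\lesssim\|\varphi\|_{B^s_{2,2}}^2$. The only point that is not pure arithmetic is that the two conditions define the same subset of $\mathcal S'$: if $\|\varphi\|_{B^s_{2,2}}<\infty$, the partial sums $\sum_{k=-1}^N\varphi_k$ are Cauchy in $H^s$ and so converge there, and by the reconstruction identity the limit is $\varphi$; the reverse inclusion is immediate from the first inequality.

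For the H\"older equivalence, fix $0<\alpha<1$. If $\varphi\in C^\alpha$, then $\|\varphi_{-1}\|_{L^\infty}\le\|h\|_{L^1}\|\varphi\|_{L^\infty}$ by Young's inequality, while for $k\ge 0$ I would use that $\widehat\omega$ vanishes near the origin, so $\omega$ has vanishing moments, in particular $\int\omega=0$; writing $\varphi_k(x)=\int 2^k\omega(2^k z)\bigl(\varphi(x-z)-\varphi(x)\bigr)\,dz$ gives $|\varphi_k(x)|\le[\varphi]_{C^\alpha}\int 2^k|\omega(2^k z)|\,|z|^\alpha\,dz=2^{-\alpha k}[\varphi]_{C^\alpha}\int|\omega(u)|\,|u|^\alpha\,du$, and the last integral is finite since $\omega\in\mathcal S$. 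Hence $\sup_k 2^{\alpha k}\|\varphi_k\|_{L^\infty}\lesssim\|\varphi\|_{C^\alpha}$. Conversely, let $A=\sup_k 2^{\alpha k}\|\varphi_k\|_{L^\infty}$. Then $\sum_{k\ge-1}\|\varphi_k\|_{L^\infty}\le A\sum_{k\ge-1}2^{-\alpha k}<\infty$, so $\sum_k\varphi_k$ converges uniformly to a function in $C_b(\mathbb R)$, which equals $\varphi$ by reconstruction, and $\|\varphi\|_{L^\infty}\lesssim A$. For the seminorm, given $x\ne y$ with $|x-y|\le 1$ choose $N\ge-1$ with $2^{-N}\asymp|x-y|$ and split $|\varphi(x)-\varphi(y)|$ into $\sum_{k\le N}|\varphi_k(x)-\varphi_k(y)|$ plus $\sum_{k>N}\bigl(|\varphi_k(x)|+|\varphi_k(y)|\bigr)$; the tail is $\le 2A\sum_{k>N}2^{-\alpha k}\lesssim A\,2^{-\alpha N}\lesssim A|x-y|^\alpha$, while for $k\le N$ the mean value theorem and the Bernstein bound give $|\varphi_k(x)-\varphi_k(y)|\le\|\varphi_k'\|_{L^\infty}|x-y|\lesssim 2^{(1-\alpha)k}A|x-y|$, and summing the geometric series over $k\le N$ gives $\lesssim 2^{(1-\alpha)N}A|x-y|\asymp A|x-y|^\alpha$. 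The case $|x-y|>1$ is immediate from $\|\varphi\|_{L^\infty}\lesssim A$. Thus $[\varphi]_{C^\alpha}\lesssim A$ and the two norms are equivalent.

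The whole argument is the one-dimensional case of standard Besov space theory (compare \cite[\S2.7]{bahourietal}), so I do not expect a genuine obstacle; the two points deserving a sentence of care are the identification of the distribution-space definitions (handled by the reconstruction identity together with completeness of $H^s$ and of $C_b(\mathbb R)$) and the Bernstein inequality invoked in the H\"older lower bound, which is the only place the Fourier-support hypotheses on $h$ and $\omega$ actually enter.
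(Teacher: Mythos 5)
Your argument is correct, and it is essentially the paper's own route: the paper does not prove this theorem but cites it from \cite{bahourietal}, Section 2.7, and your proof is precisely the standard one given there, specialized to one dimension (Plancherel plus bounded overlap of the dyadic annuli for $B^s_{2,2}=H^s$; the zero mean of $\omega$ for the upper Besov bound and Bernstein plus the dyadic splitting at $2^{-N}\asymp|x-y|$ for the H\"older bound, where $\alpha<1$ is used exactly where your geometric sum requires it). The two points you single out for care — identifying the distributional definitions via the reconstruction identity and completeness, and the Bernstein inequality as the only place the Fourier-support hypotheses enter — are handled correctly.
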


We turn to the proof of Theorem \ref{thm: representation} and its corollary. Given the support properties of $\widehat{\varphi_k}$, we can define $g_k$ by
\[\widehat{g_k} = e^{2^{-k}|\xi|}\widehat{\varphi_k}(\xi).\]
We have:
\[\|g_k\|_{L^p} \lesssim \|\varphi_k\|_{L^p}, \quad  p=2,\infty.\]
Indeed, for $k\ge 0$, let $\psi(2^{-k}\,\cdot)$ be a function in $C^\infty$ which is $1$ on the support of the Littlewood-Paley cutoff $\widehat{\omega}(2^{-k}\,\cdot)$. Then:
\[ g_k  =  \mathcal{F}^{-1}(e^{2^{-k}|\xi|} \psi(2^{-k}\xi))\ast\varphi_k.\]
$\mathcal{F}^{-1}$ denotes the inverse Fourier transform.
By Young's inequality (\cite{bahourietal}, Lemma 1.4)
\[\|g_k\|_{L^p}\le \| \mathcal{F}^{-1}(e^{2^{-k}|\xi|} \psi(2^{-k}\xi))\|_{L^1}\|\varphi_k\|_{L^p},\]
it suffices to obtain an $L^1$ bound for
\[ \mathcal{F}^{-1}(e^{2^{-k}|\xi|} \psi(2^{-k}\xi)) =\int e^{ix\xi}e^{2^{-k}|\xi|} \psi(2^{-k}\xi)\,\mathrm{d} \xi.\]
Let us compute:
\[\int \Big| \int e^{ix\xi}e^{2^{-k}|\xi|} \psi(2^{-k}\xi)\,\mathrm{d} \xi \Big|\mathrm{d}x.\]
Changing variables $s=2^{-k}\xi$, we have:
\[ \int \Big| \int e^{ix2^ks}e^{|s|} \psi(s)\,2^k\mathrm{d} s \Big|\mathrm{d}x. \]
Another change of variables (in the outer integral) $t=2^k x$ gives:
\[ \int \Big| \int e^{it s}e^{|s|} \psi(s)\,\mathrm{d} s \Big|\mathrm{d}t. \]
The integrand is $O(t^{-\infty})$ by repeated integration by parts. For the inhomogenous term, we have:
\[\|g_{-1}\|_{L^2} \lesssim \|\varphi_{-1}\|_{L^2}\]
by definition of the multiplier, and since the Fourier support is contained in $\{|\xi|\le 1/2\}$, we also have:
\[\|g_{-1}\|_{L^p} \lesssim  \|\varphi_{-1}\|_{L^2} \]
for any $p\ge 2$ by Bernstein's inequality \cite[Lemma 2.1]{bahourietal}.

Moreover, each of the $\varphi_k$ is in $\mathcal{S}$, and thus we have the convolution representation:
\[\varphi_k(x) =\frac{1}{\pi}\int g_k(t)\frac{2^{-k}}{(x-t)^2+(2^{-k})^2}\,\mathrm{d}t.\]
Thus we can write
\[\varphi = \sum_{k\ge -1} P_{2^{-k}}\ast g_k,\]
where the sum is convergent in $H^s$ if $\varphi\in H^s$. If $\varphi\in C^\alpha$, then we have
\begin{align*}
\|\varphi\|_{C^\beta}&\le \sum_k \|\varphi_k\|_{C^\beta} \\ 
&\lesssim \sum_k 2^{\beta k}\|\varphi_k\|_{L^\infty}\\
&\le \|\varphi\|_{C^\alpha} \sum_k 2^{(\beta-\alpha)k}.
\end{align*}
We insert this expression in the variance:
\begin{align*}
\mathbf{Var}(\mathcal{N}_n[\varphi]) &= \mathbf{E}((\sum_{k\ge -1} P_{2^{-k}}\ast g_k-\mathbf{E}\sum_{k\ge -1} P_{2^{-k}}\ast g_k)\overline{(\sum_{l\ge -1} P_{2^{-l}}\ast g_l-\mathbf{E}\sum_{l\ge -1} P_{2^{-l}}\ast g_l)})\\
&= \sum_{k,l\ge-1}\mathbf{Cov}(\mathcal{N}_n[P_{2^{-k}}\ast g_k]\, \overline{\mathcal{N}_n[P_{2^{-l}}\ast g_l]})\\
&=  \sum_{k,l\ge-1}\int \int g_k(t)\overline{g_l}(s) \mathbf{Cov}(P_{2^{-k}}(\cdot-t)\, \overline{P_{2^{-l}}(\cdot - s)})\,\mathrm{d}t\mathrm{d}s,
\end{align*}
which is formula (\ref{eq: equality}).
Thus, we may bound the variance as:
\begin{equation}
\label{eq: lpvariancebound}
\mathbf{Var}(\mathcal{N}_n[\varphi])\le \sum_{k,l} \int\int |g_k||g_l| \mathbf{Cov}(\mathcal{N}_n[P_{2^{-k}}(t-\cdot)],\mathcal{N}_n[P_{2^{-l}}(s-\cdot)])\mathrm{d}t\,\mathrm{d}s.
\end{equation}

The following estimate will be useful below to control contributions from very high frequencies. Its proof is a straightforward application of Theorem \ref{thm: lpcharacterization}:
\begin{lemma}\label{lem: hfcutoff}
Let $M\ge1$. Define the low-frequency cut-off operator $S_M\varphi$ by
\[S_M\varphi =\sum_{-1 \le k \le M-1} \varphi_k.\]
Then there is a constant $C_\alpha$ such that, for $\varphi\in C^\alpha$,
\[\|(1-S_M)\varphi\|_{L^\infty} \le C_\alpha 2^{-M\alpha}\|\varphi\|_{C^\alpha}. \]
\end{lemma}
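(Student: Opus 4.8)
The plan is to write $(1-S_M)\varphi$ as the tail of the Littlewood--Paley series and bound it block by block using the Besov characterization of $C^\alpha$ from Theorem \ref{thm: lpcharacterization}. By definition of $S_M$ we have $(1-S_M)\varphi = \sum_{k\ge M}\varphi_k$, where the $\varphi_k$ are the dyadic blocks introduced above. Since $\varphi \in C^\alpha$ with $\alpha>0$, the equivalence $\|\varphi\|_{B^\alpha_{\infty,\infty}}=\sup_k 2^{\alpha k}\|\varphi_k\|_{L^\infty}$ and $\|\cdot\|_{B^\alpha_{\infty,\infty}}\simeq\|\cdot\|_{C^\alpha}$ give
\[\|\varphi_k\|_{L^\infty}\le 2^{-\alpha k}\|\varphi\|_{B^\alpha_{\infty,\infty}}\le C\,2^{-\alpha k}\|\varphi\|_{C^\alpha}.\]
In particular $\sum_k \|\varphi_k\|_{L^\infty}<\infty$, so the series converges absolutely and uniformly and $(1-S_M)\varphi$ is an honest continuous function (this is also consistent with the convergence of the Littlewood--Paley decomposition in $C^\beta$, $\beta<\alpha$, from Theorem \ref{thm: representation}).

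With this in hand the estimate is immediate: by the triangle inequality in $L^\infty$ and summing the geometric series,
\[\|(1-S_M)\varphi\|_{L^\infty}\le \sum_{k\ge M}\|\varphi_k\|_{L^\infty}\le C\|\varphi\|_{C^\alpha}\sum_{k\ge M}2^{-\alpha k}=\frac{C}{1-2^{-\alpha}}\,2^{-\alpha M}\|\varphi\|_{C^\alpha},\]
which is the claim with $C_\alpha=C/(1-2^{-\alpha})$.

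There is essentially no serious obstacle; the only point deserving a word of care is the reduction from the formal series to a genuine function, i.e.\ that $S_M\varphi\to\varphi$ uniformly as $M\to\infty$. This is exactly the absolute convergence noted above, since $\alpha>0$ makes $\sum_k 2^{-\alpha k}$ summable; if one prefers to bypass it, the partial sums $S_M\varphi$ are uniformly Cauchy by the same geometric bound, so they converge in $L^\infty$ to a continuous function which agrees pointwise with $\varphi$, and the displayed tail estimate then holds for this limit.
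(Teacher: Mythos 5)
Your proof is correct and is exactly the ``straightforward application of Theorem~\ref{thm: lpcharacterization}'' that the paper has in mind: write the tail $\sum_{k\ge M}\varphi_k$, use the Besov characterization $\|\varphi_k\|_{L^\infty}\lesssim 2^{-\alpha k}\|\varphi\|_{C^\alpha}$, and sum the geometric series. The remark about uniform convergence of the partial sums is a sensible bit of care, but both the content and the route match the paper.
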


\section{$H^{1+}$ CLT for general Wigner matrices}
The goal of this section is to prove the following bound:
\begin{proposition} \label{prop:varh1_res}
Let
\[H := \frac{1}{\sqrt{n}}W\]
be a Wigner matrix such that the entries of $H$ satisfy condition (\textbf{C0}). Let
\[z = E+i\eta\in\mathbf{T}_L,\]
where
\[\mathbf{T}_L := \{z = E + i\eta | |E| \leq 5, 0 < \eta \leq 10\}.\]
We have
\begin{equation} \label{eqn:varbound}
\emph{\textbf{Var}(\textrm{Tr}}G(z)) \lesssim \frac{1}{\eta^{2+\epsilon}},
\end{equation}
where $G =  (M-z)^{-1}$ is the resolvent for $M$.
\end{proposition}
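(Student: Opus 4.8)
The plan is to control $\mathbf{Var}(\mathrm{Tr}\,G(z))$ by writing $\mathrm{Tr}\,G(z) = \sum_{i} G_{ii}(z)$ and using the self-consistent equation for the diagonal resolvent entries together with the local semicircle law of Erd\H{o}s--Yau and collaborators. The starting point is that on the domain $\mathbf{T}_L$ one has the a priori bound $|G_{ii}(z)| \lesssim 1$ and the isotropic/entrywise local law giving $\max_i |G_{ii}(z) - m_{sc}(z)| \prec (n\eta)^{-1/2}$ on the relevant scales, where $m_{sc}$ is the Stieltjes transform of the semicircle distribution. The key point is that $\mathrm{Tr}\,G$ is a function of the matrix entries with very small individual sensitivity once $\eta$ is not too small, so a martingale/resolvent-expansion decomposition of the fluctuation should yield a variance that gains over the trivial bound $\eta^{-4}$.

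Concretely, I would proceed as follows. First, write $\mathrm{Tr}\,G(z) - \mathbf{E}\,\mathrm{Tr}\,G(z) = \sum_{k} (\mathbf{E}_k - \mathbf{E}_{k-1})\,\mathrm{Tr}\,G(z)$, the martingale difference decomposition with respect to the filtration generated by successively revealing the rows/columns of $W$. By orthogonality, $\mathbf{Var}(\mathrm{Tr}\,G) = \sum_k \mathbf{E}\,|(\mathbf{E}_k - \mathbf{E}_{k-1})\,\mathrm{Tr}\,G|^2$. Second, for each $k$ I would use the standard minor/Schur-complement formula to compare $G$ with the resolvent $G^{(k)}$ of the matrix with the $k$-th row and column removed; since $\mathrm{Tr}\,G^{(k)}$ is measurable with respect to $\mathbf{E}_{k-1}$, one has $(\mathbf{E}_k - \mathbf{E}_{k-1})\,\mathrm{Tr}\,G = (\mathbf{E}_k - \mathbf{E}_{k-1})(\mathrm{Tr}\,G - \mathrm{Tr}\,G^{(k)})$, and the resolvent identity gives
\[
\mathrm{Tr}\,G - \mathrm{Tr}\,G^{(k)} = \Bigl(1 + \sum_{i,j \neq k} G^{(k)}_{ij} \langle e_i, W e_k\rangle \langle e_k, W e_j\rangle / n \Bigr) \big/ \bigl(W_{kk}/\sqrt n - z - \tfrac1n \langle \mathbf{w}_k, G^{(k)} \mathbf{w}_k\rangle\bigr)^2 \cdot (\ldots),
\]
so this difference is of the form (bounded numerator) $/$ (denominator bounded below by $c\eta$ with overwhelming probability, using the local law to control the quadratic form $\frac1n\langle \mathbf{w}_k, G^{(k)}\mathbf{w}_k\rangle \approx m_{sc}$). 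Third, the crucial gain: the numerator $1 + \frac1n \langle \mathbf{w}_k, (G^{(k)})^2 \mathbf{w}_k\rangle$ is, by the large deviation bounds for quadratic forms, equal to its conditional expectation $1 + \frac1n \mathrm{Tr}(G^{(k)})^2 \approx \frac1n \sum_j 1/(\lambda_j^{(k)} - z)^2$ up to fluctuations of size $(n^{1/2}\eta)^{-1}$ times $\|(G^{(k)})^2\|$-type norms. After projecting by $\mathbf{E}_k - \mathbf{E}_{k-1}$ the deterministic part of the numerator largely drops, and one is left with a term of size $\lesssim \eta^{-2} \cdot (n\eta)^{-1/2}$ in $L^2$ per $k$ (schematically), so that summing the $n$ martingale increments gives $\mathbf{Var}(\mathrm{Tr}\,G) \lesssim n \cdot \eta^{-4} (n\eta)^{-1} = \eta^{-5}/1$, which is not yet good enough — so one must iterate the expansion once more, using that the \emph{centered} quadratic form fluctuations themselves have variance $\lesssim \eta^{-2}(n\eta)^{-1}$, to extract the final power. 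Carrying the expansion to the order where the remainder is controlled by $\eta^{-2-\epsilon}$ (absorbing logarithms and the small polynomial losses inherent in the $\prec$ notation into the $\epsilon$) is the content of the estimate.

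The main obstacle, and where care is needed, is precisely this bookkeeping: naively each minor-removal step loses a factor $\eta^{-2}$ from the squared denominator and only gains $(n\eta)^{-1}$ from one large-deviation estimate, which is insufficient on mesoscopic scales $\eta \sim n^{-1+\delta}$. The resolution is to not bound $|\mathrm{Tr}\,G - \mathrm{Tr}\,G^{(k)}|$ crudely but to track cancellations: the quantity $\partial_{z} G^{(k)}_{kk}$-type terms appearing in the numerator are themselves close to deterministic quantities that are independent of the revealed randomness at step $k$, so the operator $\mathbf{E}_k - \mathbf{E}_{k-1}$ kills their leading behavior and only the genuinely fluctuating remainder survives. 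Making this rigorous requires the isotropic local law (control of $\langle \mathbf{u}, G^{(k)} \mathbf{v}\rangle$ for deterministic $\mathbf{u},\mathbf{v}$) and the fluctuation averaging lemma, both available from the Erd\H{o}s--Yau machinery on $\mathbf{T}_L$ under condition (\textbf{C0}). An alternative route, which I would keep in reserve, is to bound $\mathbf{Var}(\mathrm{Tr}\,G)$ directly via the self-consistent equation for $\mathrm{Tr}\,G$ itself (not its diagonal entries): writing $m_n(z) = \frac1n \mathrm{Tr}\,G$, one has $m_n + \frac1{z + m_n} = \mathcal{E}_n$ with an error $\mathcal{E}_n$ whose variance is estimated by fluctuation averaging, and then stability of the semicircle equation on $\mathbf{T}_L$ (where $|z + 2m_{sc}| \gtrsim \eta$ away from the edge, or $\gtrsim \eta^{1/2}$ near it) transfers this to $\mathbf{Var}(m_n)$, hence to $\mathbf{Var}(\mathrm{Tr}\,G) = n^2 \mathbf{Var}(m_n)$; the $\epsilon$ in the exponent then comes from the edge region and from the polynomial losses in $\prec$.
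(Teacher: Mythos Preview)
Your proposal differs substantially from the paper's argument, and in its present form has a genuine gap.

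The paper does \emph{not} use a martingale decomposition. Instead it splits $\mathbf{T}_L$ into three $\eta$-regimes and uses a different mechanism in each. For $n^{-1+\delta}<\eta<n^{-1/10}$ the bound is immediate from the strong local semicircle law (Theorem~\ref{thm:EYY}): $|m(z)-m_{sc}(z)|\le (\log n)^{4L}/(n\eta)$ with overwhelming probability already gives $\mathbf{Var}(\mathrm{Tr}\,G)\lesssim \eta^{-2}(\log n)^C\lesssim \eta^{-2-\epsilon}$ on that range. For $\eta<n^{-1+\delta}$, which is \emph{below} the range where the local law applies, the paper bounds $\mathbf{E}|\mathrm{Tr}\,G|^2$ by a dyadic decomposition of the eigenvalues around $E$, using only eigenvalue counting (rigidity) and the trivial pointwise bound $|\lambda_j-z|^{-1}\le \eta^{-1}$ near $E$. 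For $\eta>n^{-1/10}$, where the logarithmic factors in the local law are no longer absorbable into a power of $\eta$, the paper expands the self-consistent equation for $v_i=G_{ii}-m_{sc}$ to second order, averages, and estimates $\mathbf{E}|\sum_i \Upsilon_i|^2$ term by term (the $A_i$, $h_{ii}$, and especially $Z_i$ contributions), using the decomposition $G^{(1)}_{kl}=P^{(12)}_{kl}+P^{(1)}_{kl}$ to decouple $Z_\alpha$ and $Z_\beta$ for $\alpha\neq\beta$.

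Your sketch misses the first point entirely: for $\eta\ll n^{-1}$ the local law, the isotropic law, and fluctuation averaging are all unavailable, so neither your martingale route nor your ``alternative route'' applies there; some separate argument (like the paper's dyadic eigenvalue counting) is required. For the large-$\eta$ regime your alternative route is in spirit close to what the paper does, but the obstacle you do not identify is precisely the $\log$ factors: the reason the paper cannot simply quote the local law for $\eta\sim 1$ is that $(\log n)^C/(n\eta)$ is not $\lesssim \eta^{-1-\epsilon/2}/n$ when $\eta$ is of order one, so one must re-derive the fluctuation bound without those losses, which is what the detailed $\Upsilon_i$ analysis accomplishes. Finally, your primary martingale computation does not close: you arrive at a bound you yourself call ``not yet good enough'' and propose to ``iterate the expansion once more'' without saying what cancels; note that in $G_{kk}\bigl(1+\tfrac1n\mathrm{Tr}(G^{(k)})^2\bigr)$ the factor $G_{kk}$ still depends on column $k$, so $(\mathbf{E}_k-\mathbf{E}_{k-1})$ does \emph{not} annihilate the product, and extracting the needed extra gain requires exactly the kind of second-order expansion the paper performs.
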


To prove the proposition, we will make heavy use of the following theorem from \cite{EYY} regarding uniform rigidity of eigenvalues. Before stating it, we introduce two definitions: first, we let $m_{sc}(z)$ be the Stieltjes transform of the semicircle distribution on $[-2,2]$:
\[m_{sc}(z) =\frac{2}{\pi}\int_{-2}^2 \frac{1}{z-x}\sqrt{4-x^2}\, \mathrm{d}x.\]
We denote by $m(z)$ the normalized trace of the resolvent:
\[m(z) =\frac{1}{n}\operatorname{tr}(H-z)^{-1}.\]
\begin{theorem} [L. Erd\"os, E. Yau, J. Yin] \label{thm:EYY}
Let $H = (h_{ij})$ be a hermitian or symmetric $n\times n$ random matrix, $n \geq 3$, with $\mathbf{E}h_{ij} = 0$, $1 \leq i,j \leq n$, and assume that variances are given by $\sigma_{ij}^2 =\mathbf{E}|h_{ij}|^2 = \frac{1}{n}$.  Suppose that the distributions of the matrix elements satisfy condition \textbf{C0}.  There exist constants $A_0 > 1, \phi < 1$ such that for all $L > A_0 \log \log n$ and sufficiently large $n$,
\begin{equation}\label{eq: rigidity}
\mathbb{P}\left(\cup_{z\in\mathbf{S}_L}\{|m(z)-m_{sc}(z)| \geq \frac{(\log n)^{4L}}{n\eta} \}\right) \lesssim  \exp \left(-c(\log n)^{\phi L}\right),\end{equation}
where 
\[
\mathbf{S}_L := \{z = E + i\eta : ~|E| \leq 5, n^{-1}(\log n)^{10L} < \eta \leq 10\}.\]
For individual elements on the diagonal, we have, letting $\Lambda_d = \max_k |G_{kk}-m_{sc}|$, 
\[
\mathbb{P}(\cup_{z\in\mathbf{S}_L}\{\Lambda_d \geq \frac{(\log n)^{4L}}{\sqrt{n\eta}} \}) \lesssim \exp \left(-c(\log n)^{\phi L}\right).\]
\end{theorem}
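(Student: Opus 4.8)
The plan is to prove the local semicircle law in the self-improving form stated, following the self-consistent-equation and resolvent-minor method of Erd\"os, Yau and Yin. Write $G(z) = (H-z)^{-1}$, $m(z) = n^{-1}\operatorname{Tr}G(z)$; for each $k$ let $H^{(k)}$ be the minor of $H$ obtained by deleting the $k$-th row and column, with resolvent $G^{(k)}(z)$ and $m^{(k)}(z) = n^{-1}\operatorname{Tr}G^{(k)}(z)$. Two deterministic identities are used throughout: eigenvalue interlacing, giving $|m(z)-m^{(k)}(z)| \lesssim (n\eta)^{-1}$, and the Ward identity $\sum_j |G^{(k)}_{ij}(z)|^2 = \eta^{-1}\Im G^{(k)}_{ii}(z)$. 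The Schur complement formula reads
\[G_{kk}(z) = \Bigl(h_{kk} - z - \sum_{i,j\neq k}h_{ki}G^{(k)}_{ij}(z)h_{jk}\Bigr)^{-1} = \bigl(h_{kk} - z - m^{(k)}(z) - Z_k\bigr)^{-1},\]
with $Z_k := \sum_{i,j\neq k}h_{ki}G^{(k)}_{ij}h_{jk} - n^{-1}\operatorname{Tr}G^{(k)}$ the centred quadratic form (here $\mathbf{E}|h_{ki}|^2 = 1/n$). Since $m_{sc}$ is the unique root of $m_{sc}^2 + zm_{sc} + 1 = 0$ with $\Im m_{sc} > 0$ --- equivalently the attractive fixed point of $w \mapsto (-z-w)^{-1}$, with $|m_{sc}| \le 1$ and hence $|z+m_{sc}| \ge 1$ --- the task reduces to (i) controlling the random quantities $h_{kk}$ and $Z_k$, and (ii) controlling $\Lambda(z) := |m(z)-m_{sc}(z)|$ through the stability of that fixed-point map, whose linearization has multiplier $m_{sc}^2$ and whose stability factor is therefore $|1-m_{sc}^2| \sim \sqrt{\kappa+\eta}$, with $\kappa$ the distance from $E$ to $[-2,2]$.

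The probabilistic input is two large-deviation bounds, each proved under condition \textbf{C0} by taking a moment of order a power of $\log n$ and applying Markov's inequality, and each failing with probability at most $\exp(-c(\log n)^{\phi L})$: first $|h_{kk}| \le (\log n)^{CL}n^{-1/2}$ (stretched-exponential tails of $w_{kk}$); second, conditionally on $H^{(k)}$,
\[|Z_k| \le (\log n)^{CL}\Bigl(\frac1{n^2}\sum_{i,j\neq k}|G^{(k)}_{ij}(z)|^2\Bigr)^{1/2} = (\log n)^{CL}\Bigl(\frac{\Im m^{(k)}(z)}{n\eta}\Bigr)^{1/2},\]
the last step being the Ward identity. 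Since there are only $n$ values of $k$, these hold simultaneously for all $k$ (at a fixed $z$) off an event of probability $\exp(-c(\log n)^{\phi L})$. On the event that $\Lambda$ is small enough that the perturbed denominators stay bounded below, the Schur formula gives
\[G_{kk} - m_{sc} = m_{sc}^2(m^{(k)}-m_{sc}) + O\bigl(|h_{kk}|+|Z_k|\bigr) + O\bigl((|h_{kk}|+|Z_k|+\Lambda+(n\eta)^{-1})^2\bigr);\]
averaging over $k$, using $|m^{(k)}-m|\lesssim(n\eta)^{-1}$, and dividing by the stability factor yields an a priori bound on $\Lambda$, on $\Lambda_d := \max_k|G_{kk}-m_{sc}|$, and on $\max_{i\neq j}|G_{ij}|$ (the last from a parallel Schur argument for off-diagonal entries), of the form $(\log n)^{CL}(n\eta)^{-1/2}$ in the bulk --- already the shape of the second estimate of the theorem once constants are optimized.

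To obtain the sharp rate $(\log n)^{4L}/(n\eta)$ for $m$ one invokes the \emph{fluctuation-averaging} estimate: $n^{-1}\sum_k Z_k$ is far smaller than a single $Z_k$, namely $|n^{-1}\sum_k Z_k| \lesssim (\log n)^{CL}(\Im m_{sc}+\Lambda)/(n\eta)$ with no square root, proved by expanding $\mathbf{E}|n^{-1}\sum_k Z_k|^p$ for $p$ a power of $\log n$, re-expressing each $Z_k$ through further minors, organizing the multi-index sum by an associated graph, and using the a priori bounds above to show that only fully paired graphs survive to leading order. Inserting this into the averaged self-consistent equation, and using $\Im m_{sc} \sim \sqrt{\kappa+\eta} \sim |1-m_{sc}^2|$, turns the relation into a quadratic inequality $\Lambda(|1-m_{sc}^2| - C\Lambda) \lesssim (\log n)^{CL}(\Im m_{sc}+\Lambda)/(n\eta)$ displaying a dichotomy: either $\Lambda \lesssim (\log n)^{CL}/(n\eta)$ (the stable branch) or $\Lambda \gtrsim |1-m_{sc}^2|$. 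One closes the argument by a continuity bootstrap in $\eta$: at $\eta = 10$, $\|G\|\le 1/10$ forces $\Lambda$ tiny (stable branch); $m$ and $m_{sc}$ are Lipschitz on $\mathbf{S}_L$ with constant $O(\eta^{-2}) = n^{O(1)}$, so one discretizes $\mathbf{S}_L$ into an $n^{-10}$-net and decreases $\eta$ along it, the hypothesis at the previous node (transported by continuity) keeping $\Lambda$ on the stable branch and supplying the smallness $\Lambda \ll \sqrt{\kappa+\eta}$ needed at the current node; since $n\eta > (\log n)^{10L}$ on $\mathbf{S}_L$, the stable-branch value $(\log n)^{4L}/(n\eta)$ stays well below $\sqrt{\kappa+\eta}$, so the iteration never jumps branches. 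A union bound over the polynomially many net points is absorbed into $\exp(-c(\log n)^{\phi L})$, giving the first estimate; plugging the resulting bound on $\Lambda$ back into the Schur expansion gives $|G_{kk}-m_{sc}| \lesssim \Lambda + (n\eta)^{-1} + |h_{kk}| + |Z_k| \lesssim (\log n)^{CL}/\sqrt{n\eta}$ for all $k$, and a further union bound gives the second estimate.

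The main obstacle is the fluctuation-averaging estimate and its interplay with the near-edge stability analysis and the bootstrap. The graph-expansion bound $|n^{-1}\sum_k Z_k| \lesssim (\log n)^{CL}(\Im m_{sc}+\Lambda)/(n\eta)$ must be executed so as to keep the powers of $\log n$ under control while relying only on the weak a priori bounds available at each scale; and one must check that the stable and unstable branches of the quadratic stability relation remain separated all the way down to $\eta \sim n^{-1}(\log n)^{10L}$ --- this is precisely why the domain is $\mathbf{S}_L$ and not all of $\{\eta > 0\}$ --- so that the continuity argument cannot slip onto the large solution. The two large-deviation lemmas, the interlacing and Ward identities, the Lipschitz estimate, and the union bounds are by comparison routine; what remains is numerical bookkeeping to fix $A_0$, $\phi$, and the exponents $4L$, $10L$ by optimizing moment orders, which I would not carry out in detail.
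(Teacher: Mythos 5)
This theorem is not proved in the paper at all: it is imported verbatim from Erd\"os--Yau--Yin \cite{EYY} as an external input (the paper only uses it, e.g.\ in the proof of Proposition \ref{prop:varh1_res}), so there is no internal proof to compare against. Your outline follows exactly the strategy of the cited source --- Schur complement self-consistent equations with the minors $H^{(k)}$, interlacing and the Ward identity, large-deviation bounds for $h_{kk}$ and the quadratic forms $Z_k$ under condition \textbf{C0} via high moments of order a power of $\log n$, the fluctuation-averaging estimate for $n^{-1}\sum_k Z_k$, the quadratic stability dichotomy with factor $\sqrt{\kappa+\eta}$, and the continuity bootstrap in $\eta$ over a polynomial net --- so methodologically it is the same route as the original. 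Be aware, however, that what you have written is a roadmap rather than a proof: the fluctuation-averaging graph expansion and the near-edge branch-separation analysis, which you explicitly defer along with the exponent bookkeeping for $A_0$, $\phi$, $4L$, $10L$, are precisely the technical core of \cite{EYY}, and without them the sharp $(\log n)^{4L}/(n\eta)$ rate (as opposed to the $(n\eta)^{-1/2}$ a priori bound) is not established.
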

\begin{proof}[Proof of Proposition \ref{prop:varh1_res}]
Directly from Theorem \ref{thm:EYY}, we can already conclude (\ref{eqn:varbound}) for the region 
\[\{z =   E + i\eta : ~|E| \leq 5,\   n^{-1+\delta} < \eta < n^{-1/10}\},\]
for any fixed $0<\delta<1$.  
We will now consider the regions $0<\eta < n^{-1+\delta}$ and $n^{-1/10} < \eta < 10$ separately, with
\[0<\delta < \frac{1}{2} \frac{\epsilon}{2+\epsilon}.\]

For the region $\eta < n^{-1+\delta}$, we will use the fact that the eigenvalues are close to their classical locations. Let $E = \Re z$, and write
\begin{equation} \label{eqn:resolvent}
\textrm{Tr}\,G(z) = \sum_{i=1}^n \frac{1}{\lambda_i-E-i\eta}.
\end{equation}
Consider a dyadic decomposition around $E$; define
\begin{equation*}
U_0 = \{j : |\lambda_j - E| \leq n^{-1+\alpha}\},
\end{equation*} and for $p = 1,\ldots, \log(n^{1-\alpha})$
\begin{equation*}
U_p = \{j : 2^{p-1}n^{-1+\alpha} \leq |\lambda_j - E| \leq 2^pn^{-1+\alpha}\} 
\end{equation*} and
\begin{equation*}
U_\infty = \{j : |\lambda_j - E| > 1\}.
\end{equation*}
$\alpha>0$ will be chosen below. Given a sequence $A_n$ of events, we say that $A_n$ holds with overwhelming probability if
\[\mathbf{P}(A_n^c)\le C_kn^{-k}\]
for any $k$. By the local semicircle law, we know that for any $\delta'=\epsilon/100$, we have $|U_p| \leq 2^pn^{\alpha+\delta'}$ with overwhelming probability. In particular
\[(\mathbf{E}|U_p|^2)^{1/2}\lesssim 2^pn^{\alpha+\delta'},\]
since $|U_p|\le n$. 
For eigenvalues with index in $U_p$, we have the bound 
\[\frac{1}{|\lambda_j-E|} \lesssim 2^{1-p}n^{1-\alpha}.\] In $U_0$, we use the trivial bound 
\[\frac{1}{|\lambda_j-E-i\eta|} \le \eta^{-1}.\] 
Split the sum (\ref{eqn:resolvent}) according to the partition $\{U_p\}$ to find
\begin{align*}
\mathbf{E}|\mathrm{Tr}G(z)|^2 &\lesssim \sum_{k,l\neq 0} n^{2-2\alpha}2^{-k}2^{-l}\mathbf{E}|U_k||U_l|\\
&\quad+\frac{n^{1-\alpha}}{\eta}\sum_k2^{-k}\mathbf{E}|U_0||U_k| +\frac{1}{\eta^2}\mathbf{E}|U_0|^2.
\end{align*}
Applying Cauchy-Schwarz in every term, we can bound this, up to a factor, by $(n)^{2+2\delta'}(\log n)^2 + n^{2\alpha+2\delta'}/\eta^2$.  Since we are assuming $\eta < n^{-1+\delta}$, $\mathbf{E}|\textrm{Tr}\,G(z)|^2$ is thus bounded up to a constant factor by $\eta^{-2-\epsilon}$, provided we choose $\delta'<\epsilon/4$, and $\alpha$ such that
\[(\alpha+\epsilon/4)\cdot\left(1+\frac{\delta}{1-\delta}\right)<\frac{\epsilon}{2}.\]

We turn to the case where $n^{-1/10} < \eta < 10$.  In this case, we cannot obtain (\ref{eqn:varbound}) from a direct application of Theorem \ref{thm:EYY}: we cannot avoid the $\log$ factor appearing in large deviation-type bounds. To circumvent the problem, we estimate the variance of the resolvent trace using the set of self-consistent equations. 

We first introduce some definitions found in \cite{EYY}. Let 
\[G_{ij}:=(H-z)^{-1}(i,j), \quad 1\le i,j\le n.\]
Define $H^{(i)}$ to be the $(n-1)\times (n-1)$ Wigner matrix obtained from $H$ by removing the $i$th row and column. We set
\[G^{(i)}_{ij}=(H^{(i)}-z)^{-1}(i,j).\]
The vector $\mathbf{a}^i= (\mathbf{a}^i_k)_{1\le k \le n}$ is obtained from the $i$th column of $H^{(i)}$ by deleting its $i$th element. We denote by $\mathbf{E}_i$ the expectation with respect to the $i$th column of $H$, i.e. with respect to the random variables $(h_{ji})_{1\le j\le n}$. 

\begin{lemma}
The diagonal resolvent matrix elements $G_{ii}$ satisfy the following system of self-consistent equations
\[G_{ii} = \frac{1}{-z-\sum_j\sigma_{ij}G_{jj} + \Upsilon_i},\]
where
\[\Upsilon_i := A_{i} + h_{ii} - Z_i\]
and
\begin{equation}\label{eqn:A}
A_i = \sigma_{ii}^2G_{ii}+\sum_{j\neq i}\sigma_{ij}^2\frac{G_{ij}G_{ji}}{G_{ii}}\end{equation}
\[Z_i = Z^{(i)}_{ii}-\mathbf{E}_iZ^{(i)}_{ii},\qquad Z_{ii}^{(i)}= \mathbf{a}^i\cdot G^{(i)}\mathbf{a}^i = \sum_{k,l\neq i} \mathbf{a}_k^i\cdot G^{(i)}_{kl}\bar{\mathbf{a}_l^{i}}.\]
\end{lemma}
In view of the previous lemma, we can write 
\[v_i:= G_{ii}-m_{sc} = \frac{1}{-z-m_{sc}-(\sum_j\sigma_{ij}^2v_j-\Upsilon_i)}-m_{sc}.\]  Using  the equation
\[(m_{sc}+z) = -m_{sc}^{-1},\]  and recalling $\sigma_{ij}^2=\frac{1}{n}$, we can expand $v_i$ as
\[v_i = m_{sc}^2(\frac{1}{n}\sum_jv_j - \Upsilon_i) - m_{sc}^3(\frac{1}{n}\sum_jv_j-\Upsilon_i)^2 + O(\frac{1}{n}\sum_jv_j-\Upsilon_i)^3.\]
We sum over the indices $i$ and divide by $n$.  Using the notation $[v]:= \frac{1}{n}\sum_iv_i$, we obtain
\[
[v] = m_{sc}^2(z)[v] - \frac{m_{sc}^2(z)}{n}\sum_i\Upsilon_i - \frac{m_{sc}^3(z)}{n}\sum_i([v]-\Upsilon_i)^2 + O((\Lambda_d + \Upsilon)^3). \]
In the above equation, we have denoted 
\[\Upsilon = \max_i |\Upsilon_i|.\]
From now on, we will omit the dependency on $z$.  The equation reduces to
\[ C_1[v] = C_2 \frac{1}{n}\sum_i\Upsilon_i +  C_3 \frac{1}{n}\sum_i([v]-\Upsilon_i)^2 + O((\Lambda_d + \Upsilon)^3),\]
where 
\begin{align*}
C_1(z)&\asymp \sqrt{\kappa +\eta}\\
\kappa &= |2-\Re z|,
\end{align*}
see \cite{EYY}, (3.13), and $C_2$, $C_3$ are bounded independently of $n$.

 are deterministic constants independent of $n$. We would like to prove the following bound:
\[ \mathbf{E}(|[v]|^2) \lesssim \frac{1}{n^2\eta^2}.\]
Taking squares, then expectations, and using Cauchy-Schwarz, we have
\begin{equation} \label{eqn:error}
\mathbf{E}(|[v]|^2) \lesssim \frac{1}{\kappa+\eta}\left(\frac{1}{n^2}\mathbf{E}\big|\sum_i\Upsilon_i\big|^2 + \frac{1}{n^2}\mathbf{E}\big|(\sum_i([v]-\Upsilon_i)^2\big|^2 + \mathbf{E}|O(\Lambda_d + \Upsilon)^3|^2\right).
\end{equation}
We begin with the first and second terms in (\ref{eqn:error}). The remainder term will be dealt with at the end.
From $\Upsilon_i = A_i + h_{ii} - Z_i$, we have
\[ \frac{1}{n^2}\big|\sum_i\Upsilon_i\big|^2 = \frac{1}{n^2}\big|\sum_iA_i + \sum_ih_{ii} - \sum_i Z_i\big|^2.\]
The right hand side is bounded up to a constant factor by
\[\frac{1}{n^2}\left(\big|\sum_iA_i\big|^2 + \big|\sum_ih_{ii}\big|^2 + \big|\sum_iZ_i\big|^2\right).\]
Thus, it will suffice to establish the three estimates
\begin{align*}
\mathbf{E}\big|\sum_ih_{ii}\big|^2 \lesssim \frac{\kappa + \eta}{\eta^2},\\
\mathbf{E}\big|\sum_iA_i\big|^2 \lesssim \frac{\kappa + \eta}{\eta^2},
\end{align*}
and
\begin{equation*}
\mathbf{E}\big|\sum_iZ_i\big|^2 \lesssim \frac{\kappa + \eta}{\eta^2}.
\end{equation*}
The first one is obvious, since $\mathbf{E}|h_{ii}|^2=\frac{1}{n}$. We proceed to prove the second bound.  Using the  statement concerning individual elements of the resolvent in Theorem \ref{thm:EYY}, we have
\begin{equation}\label{eq: Aibound}
\left|\frac{1}{n}\sum_{ij}\frac{G_{ij}G_{ji}}{G_{ii}}\right| \leq \left|\frac{m_{sc}(z)}{n}\sum_{ij}G_{ij}G_{ji}\right| + \frac{(\log n)^{4L}}{n\sqrt{n\eta}}O(\sum_{ij}|G_{ij}||G_{ji}|).\end{equation}
For the first term, we write out 
\[G_{ij} = \sum_m\frac{u_m(i)\overline{u_m(j)}}{\lambda_m-z},\] 
where $(u_m(i))_{1\le i\le n}$ is the eigenvector of $H$ corresponding to the eigenvalue $\lambda_m$. We expand the double sum using orthogonality of the $u_m$, and find that it is bounded by
\[\left|\frac{m_{sc}(z)}{n}\sum_m\frac{1}{(\lambda_m-z)^2}\right| = \left|m_{sc}(z)\frac{\mathrm{d}}{\mathrm{d}z} m(z)\right|.\]
By the strong local semicircle law and Cauchy's integral formula, this is bounded by $1/\sqrt{\eta}$, up to a constant factor for $\eta>n^{-1/10}$, with overwhelming probability. Squaring this, we obtain
\[\left|\frac{m_{sc}(z)}{n}\sum_{ij}G_{ij}G_{ji}\right|^2 \lesssim \frac{1}{\eta} \le \frac{\kappa +\eta}{\eta^2},\]
holding with overwhelming probability.
Since $|A_i|$ has a polynomial upper bound, we have obtained the desired estimate for the main term in \eqref{eq: Aibound}. For the error term, we use 
\[|G_{ij}| \lesssim \frac{1}{\sqrt{n\eta}}\] 
to obtain the bound, again using the fact that $\eta >  n^{-1/10}$.

It remains to bound the quantity 
\[\mathbf{E}\big|\sum_iZ_i\big|^2.\]
Since $\mathbf{E}Z_i = 0$, the variance of $[Z]$ is given by
\begin{equation}
\label{eq: Zsquared}
\frac{1}{n^2}\mathbf{E}|\sum_i^nZ_i|^2 = \frac{1}{n^2}\mathbf{E}\sum_{\alpha\neq\beta}\bar{Z_\alpha}Z_\beta + \frac{1}{n^2}\mathbf{E}\sum_\alpha|Z_\alpha|^2.
\end{equation}
We shall focus on the case where $\alpha = 1$ and $\beta = 2$.  Writing $G_{kl}^{(1)}$ as
\[G_{kl}^{(1)} = P_{kl}^{(12)}+P_{kl}^{(1)}.\]
where $P^{(1)}$ and $P^{(2)}$ (for $k,l \neq 1$) are defined as 
\[P_{kl}^{(12)}:=G_{kl}^{(12)}, P_{kl}^{(1)}:= \frac{G^{(1)}_{k2}G^{(1)}_{2l}}{G^{(1)}_{22}} \qquad \textrm{if } ~k,l\neq2.\]
\[
P_{kl}^{(12)}:=0, P_{kl}^{(1)}:=G_{kl}^{(1)}\qquad \textrm{if } k=2\textrm{ or }l=2.\]
With these notations and denoting $\mathbf{I}\mathbf{E}_i:= \mathbf{I}-\mathbf{E}_{\mathbf{a}_i}$, we can express
\[Z_1 = \mathbf{I}\mathbf{E}_1\mathbf{a}^1\cdot P^{(12)}\mathbf{a}^1 + \mathbf{I}\mathbf{E}_1\mathbf{a}^1\cdot P^{1}\mathbf{a}^1\]
and
\[ Z_2 = \mathbf{I}\mathbf{E}_2\mathbf{a}^2\cdot P^{(21)}\mathbf{a}^2 + \mathbf{I}\mathbf{E}_2\mathbf{a}^2\cdot P^{2}\mathbf{a}^2.\]

Using the previous expressions, we see that to estimate the right side of \eqref{eq: Zsquared}, it suffices to obtain the following bound:
\[\mathbf{E}|\mathbf{a}^1\cdot P^{(1)}\mathbf{a}^1|^2 \lesssim \frac{1}{(n\eta)^2}.\]
Follow the idea in section 8.1 of \cite{EYY2}, we write
\[
\mathbf{E}|\mathbf{a}^1\cdot P^{(1)}\mathbf{a}^1|^2 = \mathbf{E}\left|\sum_{k,l}\bar{a^1_k}\frac{G_{k2}^{(1)}G_{2l}^{(1)}}{G_{22}^{(1)}}a^1_l\right|^2.\]
Expanding, we have
\[\mathbf{E}\mathbf{E}_1\sum_{k,l,p,q}\overline{a_k^1}a_l^1a_p^1\overline{a_q^1} \frac{G_{k2}G_{2l}\overline{G_{p2}}\overline{G_{2q}}}{|G_{22}|^2}.\]
There are only a few cases where the first expectation is nonzero. One such case is $k=l$, $p=q$.  The other situations can be dealt with in similar fashion.  Taking the first expectation in, we have to bound
\[\frac{1}{n^2}\mathbf{E}\left(\sum_k \frac{G_{k2}G_{2k}}{|G_{22}|}\right)^2.\]
We expand
\[|G_{22}|= m_{sc}(z)+ O\left(\frac{(\log n)^{4L}}{\sqrt{n\eta}}\right)\]
with overwhelming probability.
Thus, the only important quantity is
\begin{equation}\label{eq: Gk2}
\frac{1}{n^2}\mathbf{E}\left(\sum_k G_{k2}G_{2k}\right)^2.
\end{equation}
Expanding the above sum, we obtain
\[\frac{1}{n^2}\mathbf{E}\left(\sum_m\frac{|u_m(2)|^2}{(\lambda_m-z)^2}\right)^2 = \frac{1}{n^2}\mathbf{E}\left(\frac{\mathrm{d}}{\mathrm{d}z}G_{22}(z)\right)^2\]
Using Theorem \cite{EYY} and Cauchy's estimate once more, we obtain
\[\left|\frac{1}{n^2}\mathbf{E}\left(\sum_k \frac{G_{k2}G_{2k}}{|G_{22}|}\right)^2\right|\lesssim \frac{1}{n^2\eta},\]
with overwhelming probability. Again, all quantities involved have polynomial bounds, so we have obtained the desired estimate.

Next, we direct our attention to the second term on the right hand side of equation (\ref{eqn:error}).  Using Theorem (\ref{thm:EYY}), we see that the term $\frac{1}{n^2}(\sum_i[v]^2)^2$ will be bounded by $n^{-4+\epsilon}\eta^{-4}$ and since $\eta > n^{-1/10}$, will in turn be bounded by $n^{-2}\eta^{-1}$ if $\eta>n^{-2/3}$.  So it remains to bound the term
\[\mathbf{E}\frac{1}{n^2}|\sum_i\Upsilon_i^2|^2 \lesssim \frac{1}{n^2}\left(\mathbf{E}|\sum_iA_i^2|^2 + \mathbf{E}|\sum_ih_{ii}^2|^2 + \mathbf{E}|\sum_iZ_i^2|^2\right).\]
Again by Theorem (\ref{thm:EYY}) and equation (\ref{eqn:A}), the first term on the right hand side is harmless.  The second term is trivial to bound so we focus on the third term.
Applying Cauchy-Schwarz twice, we have
\begin{equation}
\frac{1}{n^2}\mathbf{E}\left|\sum_iZ_i^2\right|^2 \leq \frac{1}{n}\sum_i\mathbf{E}|Z_i|^4.
\end{equation}
The desired bound then follows immediately from lemma \ref{eyylemma} below, the definition of $Z$, and the estimates
\[\left(\sum_{i}|G_{ii}^{(j)}|^2\right)^{1/2} \lesssim \sqrt{n}.\]
By (3.39), (3.41) in \cite{EYY}, we have
\[\left(\sum_{i\neq j} |G^{(k)}_{ij}|^2\right)^{1/2} \leq \left(\sum_m \frac{1}{|\lambda_m^{(k)}-z|^2}\right)^{1/2} \lesssim (\Im m_{sc}(z))^{1/2}n^{-1/2}\eta^{-1/2},\]
holding with overwhelming probability.
The proof of the next lemma can be found in Appendix B of \cite{EYY3}.
\begin{lemma}\label{eyylemma}
  Let $a_i$ $(1\leq i \leq n)$ be $n$ independent random complex variables with mean zero, variance $\sigma^2$ and having uniform subexponential decay.  Let $B_{ij} \in \mathbb{C}$ $(1\leq i \leq n)$.  Then we have that
\begin{equation*}
\mathbb{P}\{|\sum_{i=1}^n\bar{a}_iB_{ii}a_i - \sum_{i=1}^n\sigma^2B_{ii}| \geq D\sigma^2(\sum_{i=1}^n|B_{ii}|^2)^{1/2}\} \leq C\exp(-cD^{\frac{1}{1+\alpha}})
\end{equation*}
and
\[
  \mathbb{P}\{|\sum_{i\neq j}\bar{a}_iB_{ij}a_j| \geq D\sigma^2(\sum_{i\neq j}|B_{ij}|^2)^{1/2}\} \leq C\exp(-cD^{\frac{1}{2(1+\alpha)}}).
\]
\end{lemma}
Lastly, the last term in equation (\ref{eqn:error}) can be bounded by $n^{-3/2+\epsilon}\eta^{-3/2}$ because both $\Lambda_d$ and $\Upsilon$ are bounded by $n^{-1/2+\epsilon}\eta^{-1/2}$ with overwhelming probability by Theorem \ref{thm:EYY}.  For $\eta > n^{-1/10}$, we have $n^{-3/2+\epsilon}\eta^{-3/2} < n^{-1/2}\eta^{-1/2}$ and this concludes the proof of (\ref{eqn:varbound}).
\end{proof}
We are now ready to prove Theorem \ref{thm2}. Write $\varphi= \chi \varphi+ (1-\chi) \varphi$, where $\chi$ is a cut-off function equal to $1$ in $[-3,3]$ and equal to $0$ in $[-4,4]^c$.
Then we have
\begin{align*}
\mathbf{Var}(\mathcal{N}_n[\varphi])&=\mathbf{Var}(\mathcal{N}_n[\chi \varphi]+\mathcal{N}_n[(1-\chi)\varphi])\\
& \le 2\mathbf{Var}(\mathcal{N}_n[\chi \varphi]) + 2\mathbf{Var}(\mathcal{N}_n[(1-\chi)\varphi]).
\end{align*}
We bound the second term by the second moment $\mathbf{E}|\mathcal{N}_n[(1-\chi)\varphi|^2$, which can be estimated as follows:
\[\mathbf{P}(|\sum_i \left((1-\chi)\varphi\right)(\lambda_i)|>0)\le \mathbf{P}(\lambda_1 > 4)+\mathbf{P}(\lambda_n <-4).\]
The last quantity is bounded by $e^{-n^c}$ (\cite{EYY3}, Lemma 7.2).
Thus
\begin{align*}
\mathbf{E}|\mathcal{N}_n[(1-\chi)\varphi]|^2 &= 2\int_0^{n\|\varphi\|_{L^\infty}} x\mathbf{P}( |\sum_i \varphi(\lambda_i)|\ge x)\,\mathrm{d}x\\
&\lesssim n^2\|\varphi\|^2_{L^\infty}\cdot \mathbf{P}( |\sum_i \varphi(\lambda_i)|> 0)\\
&= n^2e^{-n^c}\|\varphi\|^2_{L^\infty} \lesssim \|\varphi\|^2_{H^{1+\epsilon}}.
\end{align*}
The final inequality follows from the Sobolev inequality.
For the first term above, use the decomposition (\ref{eq: lpvariancebound}) for $\varphi$, but not for $\chi$:
\begin{equation}
\label{eq: crucialCS}
\mathbf{Var}(\mathcal{N}_n[\chi \varphi])\le \sum_{k,j} \int\int |g_k||g_j| \mathbf{Cov}(\mathcal{N}_n[\chi(\cdot)P_{2^{-k}}(t-\cdot)],\mathcal{N}_n[\chi(\cdot)P_{2^{-j}}(s-\cdot)])\mathrm{d}t\,\mathrm{d}s.
\end{equation}
Apply Cauchy-Schwarz to the covariance on the right side of the previous equation. Since the integrals are now decoupled, we concentrate on bounding:
\[\sum_{k} \int |g_k| \mathbf{Var}(\mathcal{N}_n[\chi(\cdot) P_{2^{-k}}(t-\cdot)])^{1/2} \,\mathrm{d}t.\]
Apply Cauchy-Schwarz again to the integral
\begin{equation} \label{eq: halfhalf} \sum_{k} \int 2^{(1+\epsilon)k} |g_k|  2^{-(1+\epsilon)k} \mathbf{Var}(\mathcal{N}_n[\chi(\cdot) P_{2^{-k}}(t-\cdot)])^{1/2} \,\mathrm{d}t\end{equation}
and obtain
\[\sum_k \left(\int 2^{(2+2\epsilon)k} |g_k|^2 \,\mathrm{d}t\right)^{1/2} \left( \int 2^{-(2+2\epsilon)k} \mathbf{Var}(\mathcal{N}_n[\chi(\cdot) P_{2^{-k}}(t-\cdot)]) \,\mathrm{d}t \right)^{1/2}.\]
We are left to examine the quantity:
\[\int_{\mathbb{R}} \mathbf{Var}(\mathcal{N}_n[\chi(\cdot)P_{2^{-k}}(t-\cdot)])\mathrm{d}t.\]
Split the integral:
\begin{equation} \label{eq: splitintegral}\int_{-5}^5 \mathbf{Var}(\mathcal{N}_n[\chi(\cdot)P_{2^{-k}}(t-\cdot)]) \,\mathrm{d}t + \int_{[-5,5]^c} \mathbf{Var}(\mathcal{N}_n[\chi(\cdot)P_{2^{-k}}(t-\cdot)]) \,\mathrm{d}t.\end{equation}
The integrand in the second term is the variance of a smooth function with support in $[-3,3]$, and such that
\[\|\chi P_{2^{-k}}(t-\cdot)\|_{C^{2}([-5,5])}\lesssim \frac{C}{(t-2)^2}.\]
By M. Shcherbina's variance bound \cite{shcherbina} for linear statistics of $H^{3/2}$ functions, this term is thus integrable.
The first term in \eqref{eq: splitintegral} can be written as a sum:
\begin{multline*}
\mathbf{Var}(\mathcal{N}_n[\chi(\cdot)P_{2^{-k}}(t-\cdot)]; |\lambda_1|, |\lambda_n|\le 2) \\+ \mathbf{Var}(\mathcal{N}_n[\chi(\cdot)P_{2^{-k}}(t-\cdot)]; \{|\lambda_1|, |\lambda_n|\le 2\}^c).
\end{multline*}
Since $\chi = 1$ on $[-2,2]$:
\[\mathbf{Var}(\mathcal{N}_n[\chi(\cdot)P_{2^{-k}}(t-\cdot)]; |\lambda_1|, |\lambda_n|\le 2) \le \mathbf{Var}(\mathcal{N}_n[P_{2^{-k}}(t-\cdot)]),\]
which is bounded by $2^{2k+\epsilon}$ by (\ref{eqn:varbound}). On other hand,
\[\mathbf{Var}(\mathcal{N}_n[\chi(\cdot)P_{2^{-k}}(t-\cdot)]; \{|\lambda_1|, |\lambda_n|\le 2\}^c) \le 2^{2k}n^2e^{-n^c}.\]
Thus, (\ref{eq: halfhalf}) is bounded up to a constant factor by:
\[\sum_k \left(\int 2^{(2+2\epsilon)k} |g_k|^2 \,\mathrm{d}t\right)^{1/2}.\]
Choosing $\epsilon$ suitably, a final application of Cauchy-Schwarz and Theorem \ref{thm: lpcharacterization} end the proof of the bound (\ref{eq: h1variance}).  The central limit theorem then follows from the discussion in Section \ref{sec: approx}.
\begin{remark}
A proof along the same lines as above yields a variance bound and Gaussian fluctuations of linear statistics for functions in the Besov space $B^{1+}_{p,\infty}$ (see \cite{bahourietal}, Chapter 2 for a definition) for all $p \geq 1$.
\end{remark}
\section{Variance bound and CLT for $C^{\frac{1}{2}+}$ and $\dot{H}^{1/2}$: GUE case}
\label{sec: chalfgue}
Our starting point is the following representation for the variance, based on the determinant kernel representation of the 2-point function:
\begin{equation}\label{eq: variancebound}
\mathbf{Var}\left(\mathcal{N}_n[\varphi]\right) = \frac{1}{2}\iint_{\mathbb{R}^2}|\varphi(x)-\varphi(y)|^2\,(K_n(x,y))^2\,\mathrm{d}y\mathrm{d}x,
\end{equation}
with the kernel $K_n$ being defined in terms of Hermite functions by the Christoffel-Darboux formula:
\[K_n(x,y) =\sum_{j=0}^{n-1}\psi_j(x)\psi_j(y) = \frac{\psi_n(x)\psi_{n-1}(y)-\psi_n(y)\psi_{n-1}(x)}{x-y}.\]
The Hermite functions are defined by:
\begin{align*}
H_k(x)&=(-1)^ke^{-x^2}\frac{\mathrm{d}^k}{\mathrm{d}x^k}e^{x^2}\\
\tilde{\psi}_k(x) &= \frac{H_k(x)}{(2\pi)^{1/4}(k!)^{1/2}}e^{-x^2/2}\\
\psi_k(x) &= \left(\frac{n}{2}\right)^{1/4}\tilde{\psi}_k\left(\frac{n^{1/2}x}{2}\right).
\end{align*}
Let us recall how formula (\ref{eq: variancebound}) is derived. We can write the variance as:
\begin{align*}
\mathbf{Var}\left(\mathcal{N}_n[\varphi]\right)&=\mathbf{E}\mathcal{N}^2_n[\varphi]-(\mathbf{E}\mathcal{N}_n[\varphi])^2\\
&=\mathbf{E}\sum_{i\neq j}\varphi(\lambda_i)\varphi(\lambda_j)+\mathbf{E}\sum_i\varphi^2(\lambda_i)\\
&\quad -\sum_{i,j}\mathbf{E}\varphi(\lambda_i)\mathbf{E}\varphi(\lambda_j).
\end{align*}
In the second equality we have expanded the definition of $\mathcal{N}_n[\varphi]$. By definition
\begin{equation} \label{eqn: correlationkernel}
\mathbf{E}\sum_{i\neq j}\varphi(\lambda_i)\varphi(\lambda_j)= n(n-1)\iint\varphi(\lambda)\varphi(\mu)\,p_n^{(2)}(\lambda,\mu)\,\mathrm{d}\lambda\mathrm{d}\mu.
\end{equation}
\begin{align} \label{eqn: squarekernel}
\mathbf{E}\sum_i \varphi(\lambda_i)^2 &= n\int\varphi^2(\lambda)\,p_n^{(1)}(\lambda)\,\mathrm{d}\lambda\\
&= \iint \varphi^2(\lambda)\,(n^2p_n^{(1)}(\lambda)p_n^{(1)}(\mu)-n(n-1)p_n^{(2)}(\lambda,\mu))\mathrm{d}\mu\mathrm{d}\lambda \nonumber \\
&= \frac{1}{2} \iint (\varphi^2(\lambda)+\varphi^2(\mu) )\,(n^2p_n^{(1)}(\lambda)p_n^{(1)}(\mu)-n(n-1)p_n^{(2)}(\lambda,\mu))\mathrm{d}\mu\mathrm{d}\lambda,\nonumber
\end{align}
\begin{equation}\label{eqn: doubletrace}
\sum_{i,j}\mathbf{E}\varphi(\lambda_i)\mathbf{E}\varphi(\lambda_j) = n^2\iint\varphi(\mu)\varphi(\lambda)\,p_n^{(1)}(\lambda)p_n^{(1)}(\mu)\,\mathrm{d}\mu\mathrm{d}\lambda.
\end{equation}
In the above, $p_n^{(k)}$ denotes the $k$-point correlation function of the GUE eigenvalues. By the Dyson-Gaudin-Mehta computation, we have:
\begin{align*}
p_n^{(1)}(\lambda)&= K_n(\lambda,\lambda)\\
p_n^{(2)}(\lambda,\mu)&=K_n(\lambda,\lambda)K_n(\mu,\mu)-K_n(\lambda,\mu)K_n(\mu,\lambda)
\end{align*}
Using symmetry of $K_n$, i.e. $K_n(\lambda,\mu)=K_n(\mu,\lambda)$, we arrive at the variance formula \eqref{eq: variancebound}.

Let us return to the variance bound. M. Shcherbina and L. Pastur \cite{shcherbinapastur}, Theorem 5.2.7, show that 
\begin{equation}\label{eq: mainbound}
\mathbf{Var}\left(\mathcal{N}_n[\varphi]\right)= \frac{1}{4\pi^2}\iint_{[-2,2]^2} \frac{(\varphi(x)-\varphi(y))^2}{(x-y)^2}\frac{4-xy}{\sqrt{4-x^2}\sqrt{4-y^2}} \mathrm{d}x\mathrm{d}y +o(1)
\end{equation}
for Lipschitz functions $\varphi$. By carrying out the computation carefully, we will see that the result extends to $C^{1/2+}$ functions, and the convergence is uniform over bounded subsets of $\|\varphi\|_{C^{1/2+\epsilon}}$. 

Let us first show that the main term in (\ref{eq: mainbound}) can be controlled by $\|\varphi\|_{C^{\frac{1}{2}+\epsilon}}$. The first step is to make use of the $C^{1/2+\epsilon}$ hypothesis. Let $\epsilon>0$ such that $\varphi \in C^{1/2+\epsilon}$, then
\begin{multline*}
\iint_{[-2,2]^2} \frac{(\varphi(x)-\varphi(y))^2}{(x-y)^2}\frac{4-xy}{\sqrt{4-x^2}\sqrt{4-y^2}} \mathrm{d}x\mathrm{d}y\\ \le [\varphi]^2_{C^{1/2+\epsilon}}\iint_{[-2,2]^2}\frac{1}{|x-y|^{1-2\epsilon}} \frac{4-xy}{\sqrt{4-x^2}\sqrt{4-y^2}} \,\mathrm{d}x\mathrm{d}y.
\end{multline*}
Thus we just need to check that the integral is finite. It is clear that the integrand is locally integrable away from the points $(-2,-2)$ and $(2,2)$. Also, the integrand is positive. By symmetry, it is sufficient to check finiteness of the integral:
\begin{multline*}
\int_0^2 \int_0^x \frac{1}{|x-y|^{1-2\epsilon}} \frac{4-xy}{\sqrt{4-x^2}\sqrt{4-y^2}} \,\mathrm{d}y\mathrm{d}x\\
\le \frac{1}{2} \int_0^2 \int_0^x \frac{1}{|x-y|^{1-2\epsilon}} \frac{4-xy}{\sqrt{2-x}\sqrt{2-y}} \,\mathrm{d}y
\mathrm{d}x
\end{multline*}
Writing $4-xy= 2(2-x)+(2-y)x$, the last integral can be bounded by:
\[2\int_0^2 \int_0^x \frac{1}{|x-y|^{1-2\epsilon}} \frac{\sqrt{2-x}}{\sqrt{2-y}} \,\mathrm{d}y
\mathrm{d}x + \int_0^2 \int_0^x \frac{1}{|x-y|^{1-2\epsilon}} \frac{x\sqrt{2-y}}{\sqrt{2-x}} \,\mathrm{d}y
\mathrm{d}x.  \]
Since $\sqrt{2-y}>\sqrt{2-x}$ on the domain of integration, the first integral is convergent; by considering the cases $|x-y|\le |2-x|^{\frac{1}{2}}$ and $|x-y|\ge|2-x|^{\frac{1}{2}}$, we see that the second integral is convergent as well.

We now turn to the computation of (\ref{eq: mainbound}). The main tool will be the Plancherel-Rotach asymptotics for the Hermite polynomials \cite{szego}, Chapter 8. We begin by showing that we can replace the integral in (\ref{eq: variancebound}) by an integral over the square 
\[\max(|x|,|y|)\le 2+ n^{-\nu}\] 
for some $0<\nu<2/3$ to be selected later. This follows by the simple bound \cite{shcherbinapastur} (5.2.8)
\[K_n(2+sn^{-2/3},2+sn^{-2/3})\le Cs^{-1}n^{2/3}e^{-cs^{3/2}}.\]
This bound in turn follows from Plancherel-Rotach asymptotics in the transition region. Applying it, we find that
\[\mathbf{Var}\left(\mathcal{N}_n[\varphi]\right)=\frac{1}{2}\iint_{|x|,|y|\le 2+n^{-\nu}} |\varphi(x)-\varphi(y)|^2\,(K_n(x,y))^2\,\mathrm{d}y\mathrm{d}x + \|\varphi\|^2_\infty O(n^{-\infty}).\]
We will now further replace the integral over the square $\{|x|,|y|\le 2+n^{-\nu}\}$ by an integral over
$\{|x|,|y|\le 2-n^{-\nu}\}$.  The difference between these two squares is contained in the union of four rectangles.
Without loss of generality, we can consider the integral over the ``bottom'' rectangle
\[R_n = [-2-n^{-\nu},2+n^{-\nu}]\times [-2-n^{-\nu},-2+n^{-\nu}].\] 
We write:
\begin{multline*}
\iint_{R_n}\frac{(\varphi(x)-\varphi(y))^2}{(x-y)^2} (\psi_n(x)\psi_{n-1}(y)-\psi_n(y)\psi_{n-1}(x))^2\mathrm{d}x\mathrm{d}y\\
\le [\varphi]^2_{C^{1/2+\epsilon}}\int_{-2-n^{-\nu}}^{2+n^{-\nu}}\int_{-2-n^{-\nu}}^{-2+n^{-\nu}} \frac{1}{|x-y|^{1-2\epsilon}} (\psi_n(x)\psi_{n-1}(y)-\psi_n(y)\psi_{n-1}(x))^2\mathrm{d}x\mathrm{d}y.
\end{multline*}
The last integral is bounded up to a constant factor by:
\[\int_{-2-n^{-\nu}}^{2+n^{-\nu}}\int_{-2-n^{-\nu}}^{-2+n^{-\nu}} \frac{1}{|x-y|^{1-2\epsilon}} \left((\psi_n(x)\psi_{n-1}(y))^2+(\psi_n(y)\psi_{n-1}(x))^2\right)\,\mathrm{d}x\mathrm{d}y.\]
By symmetry (because $\psi_n$ and $\psi_{n-1}$ have similar behaviour as far as we are concerned), it suffices to deal with:
\begin{equation}
\label{eq: intoverR}
  \int_{-2-n^{-\nu}}^{2+n^{-\nu}}(\psi_n(x))^2  \int_{-2-n^{-\nu}}^{-2+n^{-\nu}}(\psi_{n-1}(y))^2 \frac{1}{|x-y|^{1-2\epsilon}}\,\mathrm{d}y\mathrm{d}x.
  \end{equation}
Using the bound $|\tilde{\psi}_n|\le Cn^{-\frac{1}{12}}$ for all $n$ \cite{szego}, Theorem 8.22.29 we have $(\psi_{n-1}(y))^2\le Cn^{-1/6}n^{1/2}=Cn^{1/3}$. Inserting this bound in the inner integral and computing, we find:
\begin{align}\label{eq: innerintbound}
\int_{-2-n^{-\nu}}^{-2+n^{-\nu}} (\psi_{n-1}(y))^2 \frac{1}{|x-y|^{1-2\epsilon}}\,\mathrm{d}y
&\le C_{\epsilon}n^{1/3}\frac{n^{-\nu}}{|x+2|^{1-2\epsilon}}\mathbf{1}_{[-2-2n^{-\nu},-2+2n^{-\nu}]^c}(x)\\&\quad+C_\epsilon n^{1/3}\mathbf{1}_{[-2-2n^{-\nu},-2+2n^{-\nu}]}(x) n^{-2\epsilon \cdot \nu}.\nonumber
\end{align}
The last estimate is obtained by direct computation. For the first term on the right, we have used the Taylor expansion of $(2+n^{-\nu}/\alpha)^{2\epsilon}$ with $\alpha=x+2$, valid in the range $n^{-\nu}\le \frac{1}{2}|\alpha|$. We insert this bound in the $x$ integral in $(\ref{eq: intoverR})$. In the region $-2+O(n^{-\nu})$, corresponding on the right side of (\ref{eq: innerintbound}), we use the bound $(\psi_n(x))^2\le Cn^{1/3}$, leading to a bound of order
\[C_\epsilon n^{2/3}n^{-\nu-2\epsilon\cdot \nu}.\]
Choosing, as we may,
\[\frac{2}{3}\frac{1}{1+2\epsilon} < \nu  <\frac{2}{3},\]
the contribution from the second term to (\ref{eq: intoverR}) is $O(n^{-c})$ for $c>0$. The first term on the right of (\ref{eq: innerintbound}) leads to a contribution bounded by:

\begin{multline} \label{eq: integral}
n^{1/3}n^{-\nu} \int_{-2+2n^{-\nu}}^{2+n^\nu} (\psi_n(x))^2 {|x+2|^{-1+2\epsilon}}\,\mathrm{d}x \\= n^{1/3-\nu}\times \left( \int_{-2+2n^{-\nu}}^{0} + \int_0^{2+n^{-\nu}}\right) \ldots .
\end{multline}
For the first integral, notice that since $\nu<2/3$, we are far clear from the transition region of the rescaled Hermite functions $\psi_n$, and so we can apply the asymptotics  in \cite{erdelyi} (see also \cite{shcherbinapastur} (5.1.9)), which show that $\tilde{\psi}_n$, $\tilde{\psi}_{n-1}$ are bounded by
\[\frac{C}{\sqrt{2-x^2}}\]
in the region of integration.
Thus the first term on the right side of (\ref{eq: integral}) is bounded by:
\[Cn^{1/3}n^{-\nu}\int_{-2+2n^{-\nu}}^0\frac{1}{\sqrt{2-x^2}}\frac{1}{|x+2|^{1-2\epsilon}}\,\mathrm{d}x=O(n^{1/3-\epsilon\nu-\nu/2}).\]
For the second integral on the right side of (\ref{eq: integral}), notice that $|x+2|$ is bounded below on the range of integration, and that on the other hand, the total integral of $(\psi_n(x))^2$ is equal to $1$. Thus the contribution from this integral is $O(n^{1/3-\nu})$.

We have so far established the asymptotic behaviour:
\[\mathbf{Var}\left(\mathcal{N}_n[\varphi]\right)=\iint_{|x|,|y|\le 2-n^{-\nu}} |\varphi(x)-\varphi(y)|^2\,(K_n(x,y))^2\,\mathrm{d}y\mathrm{d}x +C\|\varphi\|^2_{C^{1/2+\epsilon}}n^{-c}.\]
We now wish to replace $n^{-\nu}$ (whose smallness has been useful so far) in the integral range by a fixed $\delta>0$. We will bound the difference, which is the sum of four integrals of the form
\[\int_{2-\delta \le |x| \le 2-n^{-\nu}}\int_{|y| \le 2-n^{\nu}}\]
in terms of $\delta$, independently of $n$. Indeed, as explained above, since $\nu<2/3$, the Hermite functions are bounded in the region of interest. Inserting these $L^\infty$ bounds in the integral, and extending the upper limits of integration from $2-n^{-\nu}$ to $2$, we find that the difference is estimated by
\begin{equation}
\label{eq: delta}
C[\varphi]^2_{C^{1/2+\epsilon}}\int_{2-\delta \le |x| \le 2}\int_{|y| \le 2}\frac{1}{|x-y|^{1-2\epsilon}}\,\mathrm{d}x\mathrm{d}y= [\varphi]^2_{C^{1/2+\epsilon}} \cdot O(\delta).\end{equation}

Thus we are now concerned with computing the quantity:
\begin{equation}
\label{eq: intoverdelta}
\iint_{|x|,|y|\le 2-\delta} |\varphi(x)-\varphi(y)|^2\,(K_n(x,y))^2\,\mathrm{d}y\mathrm{d}x
\end{equation}
for $\delta>0$. We need to show that it matches the first term in (\ref{eq: mainbound}) with an error uniform in 
$\|\varphi\|_{C^{1/2+\epsilon}}\le K$ for any $K>0$.  Expanding $(K_n)^2$, we find:
\[(\psi_n(x))^2(\psi_{n-1}(y))^2+(\psi_n(y))^2\psi_{n-1}(x))^2-2\psi_n(x)\psi_n(y)\psi_{n-1}(x)\psi_{n-1}(y).\]
By symmetry, it suffices to consider the contributions of the first and last terms.
Using the Plancherel-Rotach asymptotics and trigonometric identities, we have:
\begin{align*}
(\psi_n(x))^2&=\frac{1}{\sqrt{2-x^2}}(\cos^2(n\alpha(\theta)-\theta/2-\pi/4)+O(n^{-1}))\\
&=\frac{1}{\sqrt{2-x^2}}(1+\cos(2n\alpha(\theta)-\theta-\pi/2))+O(n^{-1}),
\end{align*}
uniformly in $|x|\le 2-\delta$, where 
\begin{align*}
x&=\cos \theta\\
\alpha(\theta)&=\theta-\sin(2\theta)/2
\end{align*}
Similarly
\[\psi_n(x)\psi_{n-1}(x)=\frac{1}{\sqrt{2-x^2}}(\cos(2\theta)+\cos(2n\alpha(\theta)-\theta-\pi/2))+O(n^{-1}).\]
We will now simply insert these expansions into the double integral over $|x|,|y|\le 2-\delta$, and use the $C^\alpha$ smoothness of the integrand (as opposed to the Riemann-Lebesgue lemma for $L^1$ as in Pastur and Shcherbina's computation) to obtain a uniform rate of decay (for each $\delta>0$ and uniform in $\|\varphi\|_{C^{1/2+\epsilon}}\le K$). 

We start with the integrals resulting from the square terms $(\psi_n(x))^2(\psi_{n-1}(x))^2$ and\\ $(\psi_n(y))^2(\psi_{n-1}(y))^2$:
\begin{multline*}\iint_{\{|x|,|y|\le 2-\delta\}} \frac{1}{\sqrt{4-x^2}\sqrt{4-y^2}}\frac{(\varphi(x)-\varphi(y))^2}{(x-y)^2} (1+\cos(2n\alpha(\theta(x))-\theta(x)-\pi/2)) \\ \times  (1+\cos(2n\alpha(\theta(y))-\theta(y)-\pi/2)) \,\mathrm{d}x\mathrm{d}y +[\varphi]_{C^{1/2+\epsilon}}O_\delta(n^{-1}).
\end{multline*}
The $O_\delta(\cdot)$ term is justified by $|\cos(\cdot)|\le 1$ and by substituting $|x-y|^{-1+2\epsilon}$ for the ratio $(\varphi(x)-\varphi(y))^2/(x-y)^2$.
Similarly, the integrals resulting from expanding 
\[-2\psi_n(x)\psi_n(y)\psi_{n-1}(x)\psi_{n}(y)\]
 gives a contribution of
\begin{multline*}-\iint_{\{|x|,|y|\le 2-\delta\}} \frac{1}{\sqrt{4-x^2}\sqrt{4-y^2}}\frac{(\varphi(x)-\varphi(y))^2}{(x-y)^2} (x+\cos(2n\alpha(\theta(x))-\theta(x)-\pi/2)) \\ \times  (y+\cos(2n\alpha(\theta(y))-\theta(y)-\pi/2)) \,\mathrm{d}x\mathrm{d}y +[\varphi]_{C^{1/2+\epsilon}}O_\delta(n^{-1}).
\end{multline*}
Adding the main term in the last two displayed equations, we find
\[\iint_{\{|x|,|y|\le 2-\delta\}} \frac{4-xy}{\sqrt{4-x^2}\sqrt{4-y^2}}\frac{(\varphi(x)-\varphi(y))^2}{(x-y)^2}\mathrm{d}x\mathrm{d}y,\]
as expected. It remains to show that the oscillatory integrals decay uniformly for $\varphi$ in a bounded set of $C^{1/2+\epsilon}$ and each $\delta>0$.
For this, it will suffice to deal with integrals of the form
\begin{equation} \label{eq: oscillation} \int_{-2+\delta}^{2-\delta}\cos(2n\alpha(\theta(x)))\frac{(\varphi(x)-\varphi(y))^2}{(x-y)^2}\frac{\mathrm{d}x}{\sqrt{4-x^2}}.\end{equation}
$\alpha$ and $x$ are related by the change of variables
\[\frac{\mathrm{d}\alpha(x)}{\mathrm{d}x}=-2\sin\theta(x)=-(4-x^2)^{1/2},\] 
non singular away from $x=0$ and $x=\pm 2$.
We treat (\ref{eq: oscillation}) as an oscillatory integral with non-stationary phase and H\"older continuous amplitude. The guiding model is the integral
\begin{equation}
\label{eq: model}
I(t)=\int_a^b e^{i\phi(x)t}A(x)\,\mathrm{d}x
\end{equation}
with $|\phi'|> \varepsilon$  and an amplitude $A\in C^\alpha$. This can be dealt with by writing:
\begin{align*}
\int_a^b e^{i\phi(x)t}a\left(x+\frac{\pi}{t\phi'(x)}\right)\,\mathrm{d}x &= \int_{a+O(1/t)}^{b+O(1/t)}  e^{i\phi\left(x-\frac{\pi}{t\phi'(x)}\right)t}A(x)\,\mathrm{d}x+O_{\|a\|_\infty,\epsilon}(1/t)\\
&=\int_a^be^{i(t\phi(x)+\pi)}e^{iO_{\varepsilon}(1/t^2)} A(x)\,\mathrm{d}x +O_{\|a\|_\infty,\epsilon}(1/t)\\
&= -I(t)+O(1/t).
\end{align*}
Thus we can write
\[2|I(t)| = \left|\int_a^b e^{i\phi(x)t}\left(A(x)-A(x+\pi/(t\phi'(x)))\right)\,\mathrm{d}x\right|+O(1/t).\]
By the $\alpha$-H\"older condition, the main term in the last equation is bounded by $C\|a\|_{C^\alpha}\cdot t^{-\alpha}$ if $a,b<\infty$. A similar analysis around around points $x$ where $\phi'(x)=0$ but $\phi''(x)>\epsilon$ shows that the contribution to the integral from a neighbourhood of that point is $C\|a\|_{C^\alpha}t^{-\alpha/2}$.

We will apply the same strategy to (\ref{eq: oscillation}), but we have to deal with the fact that the amplitude factor (seen as a function of $x$) becomes singular in a neighbourhood of $x=y$. Away from $y$, the algebra property of $C^\alpha$ ensures that the $C^\alpha$ norm of
\[\frac{(\varphi(x)-\varphi(y))^2}{(x-y)^2}\frac{1}{\sqrt{4-x^2}}\]
has a bound depending on $\delta$, $[\varphi]_{C^{1/2+\epsilon}}$ and the distance to $y$. Let us roughly calculate the dependence on the distance to $y$, i.e., we are interested in giving an estimate for the local norm
\[\|\cdot\|_{C^{1/2+\epsilon}(I)} =\|g_I \cdot \|_{C^{1/2+\epsilon}},\]
where $I\subset(-2+\delta,2-\delta)$ and $\operatorname{dist}(I,y)=s$, $g$ is a $C^\infty$ function supported on 
\[\{\operatorname{dist}(I,x)<s/10\}\] and $g\equiv1$ on $I$. The function $x\mapsto (\varphi(x)-\varphi(y))^2$ inherits its H\"older norm from $\varphi$, since:
\[|(\varphi(x)-\varphi(y))^2-(\varphi(x')-\varphi(y))^2| \le |\varphi(x)-\varphi(x')|\cdot 4\|\varphi\|_\infty.\]
We may ignore the factor $1/\sqrt{4-x^2}$, since its $C^1$ norm is bounded for $|x|<2-\delta$. To estimate the $C^\alpha$ norm of $1/(x-y)^2$, we simply interpolate between the $L^\infty$ and Lipschitz norms. For $x,y\in I+B(0,s)$:
\begin{align*}\frac{|f(x)-f(y)|}{|x-y|^\alpha}&\le |f(x)-f(y)|^{1-\alpha}\frac{|f(x)-f(y)|^\alpha}{|x-y|^\alpha}\\
&\le C\|f\|_{\infty}^{1-\alpha}\|f\|^\alpha_{C^\alpha}.
\end{align*}
If $\operatorname{dist}(I,x)\gtrsim s$, we thus have
\begin{align*}
\|1/(x-y)^2\|_{L^\infty(I)} &\lesssim s^{-2},\\
\|1/(x-y)^2\|_{C^1(I)} &\lesssim \|1/|x-y|^3\|_{L^\infty(I)} \lesssim s^{-3},\\
\|1/(x-y)^2\|_{C^{1/2+\epsilon}} &\lesssim s^{-5/2+\epsilon}.
\end{align*}
On the other hand, away from $x=\pm 2$, only the squared factor is singular, and so we can bound the integral by
\[C_\delta [\varphi]^2_{C^{1/2+\epsilon}}\int_{|x-y|<s, |x|<2}\frac{1}{|x-y|^{1-2\epsilon}}\,\mathrm{d}x< C_\delta s^{2\epsilon}.\]
It is now clear how to proceed: we split the integral (\ref{eq: oscillation}) according to the distance to $y$:
\[\int_{|x|<2-\delta, |x-y|<n^{-a}}  + \int_{|x|<2-\delta, |x-y|>n^{-a}} \cdots, \]
with $a>0$ a parameter to be adjusted later.
The contribution from the first term is bounded by $C_\delta [\varphi]_{C^{1/2+\epsilon}}^2n^{-a2\epsilon}$, as just explained. For the second part, we can (almost) repeat the computations leading  to the estimate for the model integral (\ref{eq: model}), using the estimate $C_\delta [\varphi]_{C^{1/2+\epsilon}}n^{a(5/2-\epsilon)}$ for the $1/2+\epsilon$ H\"older norm of the amplitude. All in all, we are lead to an estimate of the order
\[C_\delta [\varphi]_{C^{1/2+\epsilon}}n^{a(5/2-\epsilon)} \times n^{-1/4-\epsilon/2}.\]
However, we have ignored the fact that our $L^\infty$ norm estimates do not allow us to bound the error at the endpoints resulting from the translation of the variable $x$, i.e. the integral over
\[[y-n^{-a}-Cn^{-1},y-n^{-a}].\]
If instead of using the $L^\infty$ bound, we replace the integrand by $|x-y|^{-1+2\epsilon}$ once again, we obtain an error of order $[\varphi]^2_{C^{1/2+\epsilon}} n^{-2 \epsilon}$. Choosing $a$ appropriately ($a=\frac{1/2-\epsilon}{5-2\epsilon}$), we find that the model oscillatory integral (\ref{eq: oscillation}) decays at a power rate uniformly for $\varphi$ in bounded subsets of $C^{1/2+\epsilon}$ and uniformly in $|y|<2-\delta$. We can use the scheme above to deal with all the oscillatory terms we encounter, and thus we have established that
\begin{multline*}\mathbf{Var}(\mathcal{N}_n[\varphi]) = \iint_{\{|x|,|y|\le 2-\delta\}} \frac{4-xy}{\sqrt{4-x^2}\sqrt{4-y^2}}\frac{(\varphi(x)-\varphi(y))^2}{(x-y)^2}\mathrm{d}x\mathrm{d}y\\ + [\varphi]^2_{C^{1/2+\epsilon}}O(\delta) + O_{\|\varphi\|_{C^{1/2+\epsilon},\delta}}(n^{-\kappa})\end{multline*}
for some $\kappa>0$.
 
Given $K$ such that $\|\varphi\|_{C^{1/2+\epsilon}}\le K$, we can choose $\delta(K)$ to make the error in (\ref{eq: delta}) smaller than $\epsilon'$, and then choose $n(\delta)$ so as to make (\ref{eq: intoverdelta}) $\epsilon'$-close to the main term in the asymptotic (\ref{eq: mainbound}). Thus for $n\ge n(K)$, we have
\[\mathbf{Var}(\mathcal{N}_n[\varphi])\lesssim \|\varphi\|_{C^{1/2+\epsilon}}^2,\]
which is enough to close the approximation argument leading to the CLT.

To obtain the result concerning functions in $\dot{H}^{1/2}\cap L^\infty$, we choose $0<\delta<\epsilon_0$, and use the asymptotics in \cite{szego}, Theorem 8.22.29, and find that the rescaled Hermite polynomials are uniformly bounded in the region 
\[|x|,|y|\le 2-\delta.\]
Note that $\varphi(x)=\varphi(y)=0$ in the four rectangles (the ``corners'')
\begin{align*}
[-2,-2+\epsilon_0]\times[-2,-2+\epsilon_0] && [2-\epsilon_0,2]\times[-2,-2+\epsilon_0]\\
[2-\epsilon_0,2]\times[2-\epsilon_0,2] && [-2,-2+\epsilon_0,2-\epsilon_0,2].
\end{align*} 
The expression for the variance \eqref{eq: variancebound} is thus controlled by
\begin{multline*}C_{\delta}\int_{-2+\delta}^{2-\delta}\int_{-2+\delta}^{2-\delta}\frac{(\varphi(x)-\varphi(y))^2}{(x-y)^2}\,\mathrm{d}x\mathrm{d}y\\
+\iint_{R_1\cup R_2\cup R_3\cup R_4}\frac{(\varphi(x)-\varphi(y))^2}{(\delta-\epsilon_0)^2}(\psi_n(x)\psi_{n-1}(y)-\psi_n(y)\psi_{n-1}(x))^2\mathrm{d}x\mathrm{d}y,
\end{multline*}
where
\begin{align*}
R_1&=[-2,-2+\delta]\times[-2+\epsilon_0,2-\epsilon_0 & R_2&=[-2+\epsilon_0,2-\epsilon_0]\times[-2,-2+\delta]\\
R_3&=[2-\delta,2]\times[-2+\epsilon_0,2-\epsilon_0] & R_4 &= [-2+\epsilon_0,2-\epsilon_0]\times[2-\delta,2].
\end{align*}
Taking $\delta=\epsilon/2$, it now follows at once from the orthogonality of $\psi_n$ and the assumptions on $\varphi$ that
\[\mathbf{Var}(\mathcal{N}[\varphi])\le C_{\epsilon_0}\|\varphi\|^2_{\dot{H}^{1/2}}+C_{\epsilon_0} \|\varphi\|^2_{L^\infty}.\]
The CLT follows from this variance bound as in the previous cases.

\section{CLT for $C^{1/2+}$ and $H^{1/2+}$: A class of Wigner matrices}\label{sec: wigner}
In this section, we prove the central limit theorem for $C^{1/2+\epsilon}$ test functions for random matrices of Gaussian convolution type.  To achieve this, we will use a comparison procedure for the resolvent as in \cite{TV}. Let us first precisely describe the matrices we will be working with. The matrices we will consider have the form
\begin{equation} \label{eqn: jmatrixdef}
\sqrt{n}M= \frac{1}{2}(W+V),
\end{equation}
where $V$ is an independent GUE matrix and the distribution of the entries of $W=(w_{ij})_{1\le i, j \le n}$ satisfies condition \textbf{C0} (see (\ref{eqn: Czero})). The entries $\Re w_{ij}$, $\Im w_{ij}$, $1\le i < j \le n$ are all independent and the first five moments of the complex entries $w_{ij}$ match those of the GUE. That is 
\[\mathbf{E}[(\Re w_{ij})^\alpha (\Im w_{ij})^\beta]=\mathbf{E}X^\alpha Y^\beta\]
for any choice of  $0\le \alpha, \beta \le 5$ such that $\alpha+\beta=5$, where $X$ and $Y$ are independent $N(0,\frac{1}{2})$ random variables. We remark that the entries of $\sqrt{n}M$ have sub-exponential tails.

We denote by $\varphi_k$ the Littlewood-Paley projection of $\varphi$ onto frequencies of order $2^k$ (see Section \ref{sec: lp}). Write:
\begin{align*}
\varphi &= \left(\sum_{k\le \log n}+ \sum_{ \log n < k\le 2\log n } +\sum_{k>2\log n}\right)\varphi_k \\
&:= \varphi^{1,n}+\varphi^{2,n}+\varphi^{3,n}.
\end{align*}
If $\varphi \in C^{1/2+\epsilon}$, we use the trivial bound
\[\mathbf{E}|\mathcal{N}_n^\circ[\varphi]|^2\le 4n^2\|\varphi\|^2_{L^\infty}\]
and Lemma \ref{lem: hfcutoff}, and obtain
\[\mathbf{E}|\mathcal{N}_n^\circ[\varphi^{3,n}]|^2\le C_\epsilon n^{-\epsilon}.\]
Thus, the sequence $\mathcal{N}_n^\circ[\varphi^{3,n}]$ is a remainder whose second moment tends to zero. The contribution to $\mathcal{N}_n^\circ[\varphi]$ from $\varphi^{2,n}$ cannot be estimated quite so crudely. Nevertheless, we will show below that
\begin{equation}\label{eqn: secondremainder} \mathbf{E}|\mathcal{N}^\circ[\varphi^{2,n}]|^2\lesssim n^{-c\epsilon}
\end{equation}
for some $c>0$. Thus $\varphi^{2,n}$ does not contribute to the asymptotic distribution. 

Assuming (\ref{eqn: secondremainder}) for the moment, it suffices to show that $\mathcal{N}_n^\circ[\varphi^{1,n}]$ is asymptotically normal, with the correct limiting variance. For this, we will compare $\mathcal{N}^\circ[\varphi^{1,n}]$ to the corresponding quantity defined in terms of GUE eigenvalues. Let
\[\mathcal{N}_{n,\text{GUE}}^\circ[\varphi^{1,n}]=\sum_{j=1}^n\varphi^{1,n}(\lambda^{\text{GUE}}_j)-\mathbf{E}\sum_{j=1}^n\varphi^{1,n}(\lambda^{\text{GUE}}_j),\]
where $\lambda_j$, $1\le j\le n$ denote the eigenvalues of a GUE matrix normalized by $1/\sqrt{2n}$. We express the variance in terms of the GUE quantity:
\[\mathbf{Var}(\mathcal{N}_n^\circ[\varphi^{1,n}]) = \mathbf{Var}(\mathcal{N}_{n,\text{GUE}}^\circ[\varphi^{1,n}]) + \left(\mathbf{Var}(\mathcal{N}_{n,\text{GUE}}^\circ[\varphi^{1,n}])-\mathbf{Var}(\mathcal{N}_{n}^\circ[\varphi^{1,n}])\right).\]
By (\ref{eqn: guevariancebound}), the first term is bounded. Using equation (\ref{eq: equality}), we estimate the second term by
\begin{equation}\label{eqn: covcomparison}
n^2\sum_{-1 \le k,l \le \log n }\iint|C(z^k_1,z^l_2;M)-C(z^k_1,z^l_2;\text{GUE})||g_k(s)||g_l(t)|\,\mathrm{d}t\mathrm{d}s.
\end{equation}
Here, $z^k_1 = s+i\cdot2^{-k}$, $z^l_2 = t+i\cdot 2^{-l}$, and
\begin{align*}
C(z_1,z_2;M)&=\mathbf{Cov}(\Im s(z_1),\Im s(z_2)),\\
C(z_1,z_2;\text{GUE})&=\mathbf{Cov}(\Im s_{\text{GUE}}(z_1),\Im s_{\text{GUE}}(z_2)),\\
s(z) &=\frac{1}{n}\operatorname{tr}(M-z)^{-1},\\
s_{\text{GUE}}(z) &= \frac{1}{n}\operatorname{tr}\left(\frac{1}{\sqrt{2n}}V-z\right)^{-1}.
\end{align*}


We split the integral in \eqref{eqn: covcomparison} into two regions:
\begin{multline}\label{eq: splitintegral2}
n^2\iint_{[-3,3]\times [-3,3]}|C(z_1^k,z_2^l;M)-C(z_1^k,z_2^l;\text{GUE})||g_k(s)||g_l(t)|\,\mathrm{d}t\mathrm{d}s\\
+n^2\iint_{([-3,3]\times [-3,3])^c}|C(z_1^k,z_2^l;M)-C(z_1^k,z_2^l;\text{GUE})||g_k(s)||g_l(t)|\,\mathrm{d}t\mathrm{d}s.
\end{multline}
We first show that the integrals
\begin{equation} \label{eqn:term1}
n^2\iint_{[-3,3]\times [-3,3]}|C(z_1^k,z_2^l;M)-C(z_1^k,z_2^l;\text{GUE})||g_k(s)||g_l(t)|\,\mathrm{d}t\mathrm{d}s
\end{equation}
are bounded by $\|\varphi\|_{C^{1/2+\epsilon}}$, with factors summable in $k$ and $l$.

To compare $C(z_1^k,z_2^l;M)$ and $C(z_1^k,z_2^l;\text{GUE})$, we shall make use of the following slight modification of a result of T. Tao and V. Vu \cite{TV}:

\begin{proposition}[Tao, Vu] \label{prop:comp}
Let $M_0$ be a random Hermitian matrix where each entry satisfies condition (\textbf{C0}), and let $\xi$ be a random variable independent of $M_0$.  Let
\[M = M_0 + \frac{1}{\sqrt{n}}\xi A,\]
where $A$ is an elementary matrix.  Denote the Stieltjes transform of the empirical eigenvalue densities of $M_0$ and $M$ by $s_0$ and $s$, respectively.  Then, for fixed $m \geq 1$ and uniformly over $|E| < 3, \eta > n^{-1-\delta}$ for some sufficiently small $\delta > 0$ and with overwhelming probability, we have
\begin{equation} \label{eqn:comparison}
s(E+i\eta) = s_0(E+i\eta) + \sum_{j=1}^m\xi^jn^{-j/2}c_j(\eta) + O(n^{-(m+1)/2(1 - C\delta)})\min\{1,\frac{1}{n\eta}\},
\end{equation}
where 
\[c_j(\eta) = O(n^{Cj\delta} \min\{1,\frac{1}{n\eta}\}).\]

\end{proposition}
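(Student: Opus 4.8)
The plan is to expand the resolvent $G=(M-z)^{-1}$ around $G_0:=(M_0-z)^{-1}$ by a \emph{finite} Neumann series and to control each term and the remainder with the entry-wise rigidity bounds of Theorem~\ref{thm:EYY} (which applies to both $M_0$ and $M$, whose entries have variance $1/n$ in the situations of interest). Writing $P=\frac{1}{\sqrt n}\xi A$ and using $G^{-1}=G_0^{-1}+P$, the resolvent identity $G=G_0-G_0PG$ iterated $m$ times gives
\[ G=\sum_{k=0}^m(-1)^k(G_0P)^kG_0+(-1)^{m+1}(G_0P)^{m+1}G. \]
Taking normalized traces and using $(G_0P)^k=\xi^kn^{-k/2}(G_0A)^k$ yields \eqref{eqn:comparison}, with coefficients $c_j(\eta)=\frac{(-1)^j}{n}\operatorname{tr}\big((G_0A)^jG_0\big)$ that depend on $M_0$ and $z$ but not on $\xi$, and remainder $R_m=(-1)^{m+1}\xi^{m+1}n^{-(m+1)/2}\cdot\frac1n\operatorname{tr}\big((G_0A)^{m+1}G\big)$. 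The proof then reduces to (i) bounding each $c_j(\eta)$ by $n^{Cj\delta}\min\{1,(n\eta)^{-1}\}$, (ii) bounding $R_m$ by the displayed error, and (iii) upgrading these pointwise-in-$z$ bounds to uniform ones.

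For (i): since $A$ is an elementary matrix — say $A=e_ae_b^{*}+e_be_a^{*}$ in the Hermitian case — the operator products telescope, each internal factor contributing a scalar $e_b^{*}G_0e_a=(G_0)_{ba}$, so $\frac1n\operatorname{tr}((G_0A)^jG_0)$ is a finite sum of terms, each a product of at most $j-1$ off-diagonal (resp.\ diagonal) entries of $G_0$ with one entry of $G_0^2$, divided by $n$. On the overwhelmingly likely event of Theorem~\ref{thm:EYY} one has $|(G_0)_{ii}|\le n^{C\delta}$, and the Ward identity $\sum_j|(G_0)_{ij}|^2=\eta^{-1}\operatorname{Im}(G_0)_{ii}$ gives $|(G_0)_{ij}|\le n^{C\delta}(n\eta)^{-1/2}$ for $i\ne j$; applying Cauchy's estimate to $(G_0^2)_{pq}=\partial_z(G_0)_{pq}$ on a circle of radius $\eta/2$ (on which the local law still applies) yields $|(G_0^2)_{pq}|\le n^{C\delta}\eta^{-1}$. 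Since $(n\eta)^{-1/2}\le1$ when $n\eta\ge1$ and the prefactor $\frac1n$ converts $\eta^{-1}$ into $(n\eta)^{-1}\le\min\{1,(n\eta)^{-1}\}$, this gives $|c_j(\eta)|\le n^{Cj\delta}\min\{1,(n\eta)^{-1}\}$. In the sliver $n^{-1-\delta}<\eta<n^{-1+\delta}$ one first transfers the bounds to this range by the monotonicity of $\eta\mapsto\eta\operatorname{Im}(G_0)_{ii}(E+i\eta)$ together with the local law at height $n^{-1+\delta}$; there $\min\{1,(n\eta)^{-1}\}=1$ and only a fixed power of $n$ is needed.

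For (ii) the same telescoping reduces $\frac1n\operatorname{tr}((G_0A)^{m+1}G)$ to a finite sum of $\frac1n(G_0)_{pq}^{\,m}(GG_0)_{pq}$, so we also need entry-wise control of $G$. As $M$ is again a Wigner matrix with entries satisfying \textbf{C0}, Theorem~\ref{thm:EYY} applies to $G$ directly (alternatively, the bounds transfer from $G_0$ along the resolvent identity, the perturbation $M-M_0$ having operator norm $O(n^{-1/2+o(1)})$ and rank $\le2$), and then $|(GG_0)_{pq}|\le\eta^{-1}\big(\operatorname{Im}G_{pp}\operatorname{Im}(G_0)_{qq}\big)^{1/2}\le n^{C\delta}\eta^{-1}$ by Cauchy–Schwarz and the Ward identity. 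Combining with $|\xi|^{m+1}\le n^{C\delta}$ (overwhelmingly likely by \textbf{C0}) gives, for $\eta\ge n^{-1+\delta}$, a bound for $|R_m|$ of the form $n^{-(m+1)/2(1-C\delta)}\min\{1,(n\eta)^{-1}\}$ after adjusting $C$, with the small-$\eta$ sliver handled as above. Finally (iii): $z\mapsto s(z)$ and $z\mapsto s_0(z)$ are Lipschitz with constant $\le\eta^{-2}\le n^{2+2\delta}$ on the relevant region, so it suffices to establish the estimate on an $n^{-A}$-net (with $A$ large), taking a union bound over the net and over the finitely many exceptional events from Theorem~\ref{thm:EYY}. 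I expect the main obstacle to be step (ii) near $\eta\sim n^{-1}$: there the local semicircle law no longer furnishes the gain $(n\eta)^{-1}$, and one must fall back on the deterministic bounds $\|G\|,\|G_0\|\le\eta^{-1}$ together with the monotonicity of $\eta\mapsto\eta\operatorname{Im}G_{ii}$; the factor $\min\{1,(n\eta)^{-1}\}$ and the slack $C\delta$ in the exponent of \eqref{eqn:comparison} are precisely what make this loss affordable.
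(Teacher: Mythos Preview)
Your proposal is correct and follows essentially the same approach as the paper, which defers to Tao--Vu \cite{TV} (their Proposition~13 for the Neumann expansion and Lemma~16 for the entry-wise resolvent bounds) and notes only that the uniformity in $E$ follows from Theorem~\ref{thm:EYY} together with the monotonicity $\Im s(E+i\eta_1)/\Im s(E+i\eta_2)\le\eta_2/\eta_1$. You have simply spelled out the details---the telescoping for elementary $A$, the Ward identity and Cauchy estimate for the $c_j$, the monotonicity transfer into the sliver $\eta\in(n^{-1-\delta},n^{-1+\delta})$, and the net argument for uniformity---that the paper leaves implicit in its citation.
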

The proof of the above proposition follows as in \cite{TV}.  To derive the main result in \cite{TV}, Tao and Vu only needed to compare resolvents for a fixed energy $E$.  Using Theorem \ref{thm:EYY}, together with the bound
\[ \frac{\Im s(E+i\eta_1)}{\Im s(E+i\eta_2)} \leq \frac{\eta_2}{\eta_1},\]
Lemma 16 in \cite{TV} will hold uniformly over the range $|E|<3$. By \cite{TV}, Proposition 13, this yields the expansion above. In particular, the condition that $\eta > n^{-1-\delta}$ is crucial. 

Setting
\[S(m,\eta) = \sum_{j=1}^m\xi^jn^{-j/2}c_j(\eta) \]
and taking imaginary parts on both sides of equation (\ref{eqn:comparison}), we have
\begin{multline} \label{eqn: product}
\mathbf{E}(\Im s(z^k_1) \Im s(z^k_2)) = \mathbf{E}(\Im s_0(z^k_1)\Im s_0(z^l_2))  \\
+\mathbf{E}\left[\Im s_0(z^k_1)\cdot \Im \left( S(m,2^{-l})+O(n^{-\frac{m+1}{2}(1-C\delta)})\cdot \min(1,n^{-1}2^{l})\right)\right] \\
+\mathbf{E}\left[\Im s_0(z^k_2)\cdot \Im \left( S(m,2^{-k})+O(n^{-\frac{m+1}{2}(1-C\delta)})\cdot \min(1,n^{-1}2^{k})\right)\right] \\
+\mathbf{E}\Im \left( S(m,2^{-l})+O(n^{-\frac{m+1}{2}(1-C\delta)})\cdot \min(1,n^{-1}2^{l})\right)  \\
\times \Im \left( S(m,2^{-k})+O(n^{-\frac{m+1}{2}(1-C\delta)})\cdot \min(1,n^{-1}2^{k})\right). 
\end{multline}

In the above, we have omitted error terms from exceptional sets whose complements have overwhelming probability. This is justified since all quantities are polynomially bounded in $n$ for $k,l\le C\log n$. We will apply the above estimate in the situation where variables $\xi_1, \xi_2$ take the place of $\xi$ and the moments of $\xi_1,\xi_2$ match up to fifth order. When we take the difference, the terms involving moments of fifth order or less can be ignored.  However, we cannot afford the error terms appearing on the second and third lines of (\ref{eqn: product}). On the other hand, as will be seen below, the factor $\min(1,n^{-1}2^{k})\cdot \min(1,n^{-1}2^{l})$ appearing in the final term on the right side of (\ref{eqn: product}) is sufficient to allow control of the difference of covariances. 

The concentration of $\Im s_0$ around its mean provided by \ref{thm:EYY} allows us to rewrite the second and third lines of \eqref{eqn: product} as
\begin{equation}\label{eqn: firstrep}
\mathbf{E}\Im s_0(z^k_1)(1+O(2^k/n^{1-\delta})) \cdot \mathbf{E} \Im \left( S(m,2^{-l})+O(n^{-\frac{m+1}{2}(1-C\delta)})\cdot \min(1,n^{-1}2^{l})\right),
\end{equation}
and
\begin{equation}\label{eqn: secondrep}
\mathbf{E}\Im s_0(z^k_2)(1+O(2^l/n^{1-\delta}))\cdot \mathbf{E}\Im \left( S(m,2^{-k})+O(n^{-\frac{m+1}{2}(1-C\delta)})\cdot \min(1,n^{-1}2^{k})\right).
\end{equation}

We now use (\ref{eqn:comparison}) in the second term $\mathbf{E}(s(z_1))\mathbf{E}(s(z_2))$ to obtain the cancellation we need:
\begin{multline} \label{eqn: prodofe}
\mathbf{E}(\Im s(z^k_1))\mathbf{E}(\Im s(z^l_2)) = \mathbf{E}(\Im s_0(z^k_1)) \mathbf{E}(\Im s_0(z^l_2)) \\ 
+ \mathbf{E}\Im s_0(z^k_1) \mathbf{E}\Im \left(S(m,2^{-l}) + O(n^{-\frac{m+1}{2}(1 - C\delta)})\min(1,n^{-1}2^{l})\right)\\
+ \mathbf{E}\Im s_0(z^l_2) \mathbf{E}\Im \left(S(m,2^{-k}) + O(n^{-\frac{m+1}{2}+(1 - C\delta)})\min(1,n^{-1}2^{k})\right) \\ 
+ \mathbf{E}\Im \left(S(m,2^{-k}) + O(n^{-\frac{m+1}{2}+(1 - C\delta)})\min(1,n^{-1}2^{k})\right) \\ 
\times \mathbf{E}\Im \left(S(m,2^{-l}) + O(n^{-\frac{m+1}{2}(1 - C\delta)})\min(1,n^{-1}2^{l})\right).
\end{multline}
The terms on the second and third lines of (\ref{eqn: prodofe}) cancel the terms \ref{eqn: firstrep} and \ref{eqn: secondrep}, respectively, up to an error of order
\[O(n^{-\frac{m+1}{2}(1 - (C+1)\delta)})\min(1,2^k/n)\min(1,2^l/n).\]  
If we take $m\ge 5$ this error is
\[n^{-3+\epsilon'}\min(1,2^k/n)\min(1,2^l/n),\]
for some small $\epsilon'$ depending on $\delta$. 
 We have now established that for two matrices $M$, $M^\prime$ differing in one pair of entries whose distributions have $5$ matching moments and for some small $\epsilon' > 0$,
\begin{equation} \label{eqn:oneentry}
|C(z^k_1,z^l_2;M)-C(z^k_1,z^l_2;M')| \leq O(n^{-3+\epsilon'})\min(1,n^{-1}2^{k})\min(1,n^{-1}2^{l}).
\end{equation}

We have to swap $O(n^2)$ entries to bridge between the GUE and our Wigner matrix.  Applying equation (\ref{eqn:oneentry}) repeatedly and taking into account the factor of $n^2$ in equation (\ref{eqn:term1}), we see that if two matrices match up to $5$ moments, then the difference in the covariance is bounded by 
\begin{equation} \label{eqn:covdiffbd}
n^{1+\epsilon'}\cdot \min\{1,\frac{1}{n2^{-k}}\}\min\{1,\frac{1}{n2^{-l}}\}.
\end{equation}

The estimate (\ref{eqn:covdiffbd}) allow us to close the argument. To see this, we sum over $k,l$ of (\ref{eqn:term1}) and using (\ref{eqn:covdiffbd}), we have
\begin{align} \nonumber
n^2\sum_{-1 \le k,l \le \log n}\iint_{[-3,3]\times [-3,3]}&|C(z_1^k,z_2^l;M)-C(z_1^k,z_2^l;\text{GUE})||g_k(s)||g_l(t)|\,\mathrm{d}t\mathrm{d}s \\ \nonumber
&\lesssim n^{1+\epsilon'}\left(\sum_{-1\le k \le\log n}\|g_k\|_{L^\infty}\min(1,n^{-1}2^{k})\right)^2.
\end{align}

Inside the bracket, we have
\begin{align*} 
\sum_{-1\le k \le \log n}\|g_k\|_{L^\infty}\min(1,2^{k}n^{-1}) &= \sum_{-1\le k\le \log n}\|g_k\|_{L^\infty}2^{k(\frac{1}{2}+\epsilon)}2^{k(\frac{1}{2}-\epsilon)}\cdot n^{-1}\\
&\lesssim n^{-1}\sup_j (2^{j(\frac{1}{2}+\epsilon)}\|g_j\|_{L^\infty}) \\
&\quad \times \sum_{-1\le j\le \log n}2^{k(\frac{1}{2}-\epsilon)}\\
&\lesssim n^{-\frac{1}{2}-\epsilon'}\|\varphi\|_{C^{1/2+\epsilon}}.
\end{align*}
Squaring both sides, we gain a factor of $n^{-1-2\epsilon}$ which allows us to cancel out the factor of $n^{1+\epsilon'}$ provided we pick $\epsilon'$ sufficiently small in (\ref{eqn:oneentry}), thus ensuring that the error stays bounded up to a constant (in fact $o(1)$) factor by $\|\varphi\|_{C^{1/2+\epsilon}}^2$.  In conclusion, we have bounded $\mathbf{Var}(\mathcal{N}_n^\circ[\varphi^{1,n}])$ by $C\|\varphi\|_{C^{1/2+\epsilon}}^2$.

We now deal with the integrals in \eqref{eq: splitintegral} over the region $([-3,3]\times [-3,3])^c$. We illustrate the procedure with the integral over the infinite rectangle 
\[\{(s,t): (s,t)\in [-3,3]\times [3,\infty)\},\] the other cases being similar. We use the deterministic version of \eqref{eqn:comparison} in \cite{TV}, as we have done in the case $s,t\in [-3,3]$ above. However, instead of bounding the coefficients $c_j(\eta)$ as in Proposition \ref{prop:comp}, we return to the expansion in \cite{TV}, Proposition 13. We expand both $\Im s(z^k_1)$, $\Im s(z^l_2)$ to 8th order as above, and perform all the same cancellations, but use a different bound for the coefficients appearing in the expansion of $ s(z^l_2)$, the resolvent at energy $t\in [3,\infty)$. The estimate provided by \cite{TV} for $c_j(\eta)$ is:
\[c_j=O(\|R_0\|^j_{(\infty,1)}\min(\|R_0\|_{(\infty,1)},1/n\eta)),\]
where $R_0$ is the resolvent of the unperturbed matrix $M_0$ in the statement of Proposition \ref{prop:comp}, and
\[\|R_0\|_{(\infty,1)}= \max_{1\le i,j\le n}|(R_0)_{ij}|.\]
On the event $\max \{|\lambda_1|,|\lambda_n|\}\le t/2$, we have 
\[\max_i \frac{1}{|\lambda_i-t|}\lesssim \frac{1}{t}\]
for $t\in [3,\infty)$. By \cite{EYY3}, Theorem 7.2, the complement of this event has probability
\[Ce^{-n^c\log t} \lesssim 3^{-n^c/2}t^{-n^c/2}.\]
The conjunction of the event and delocalization of eigenvectors, a consequence of the local-semicircle law, implies, as in \cite{TV}, Lemma 16, that
\[c_j(\eta) = O(n^{\epsilon''}t^{-j})\min(n^{\epsilon''}t^{-1},1/n\eta).\]
uniformly in $\eta>n^{-1-\delta}$. Using these observations, we obtain the bound:
\[|C(z^k_1,z^l_2;M)-C(z^k_1,z^l_2;M')| \leq O(n^{-3+\epsilon'})t^{-2}\min(1,n^{-1}2^{k})\min(1,n^{-1}2^{l}),\]
for $s,t \in [-3,3]\times[0,3)$, which allows us to perform the integral over the infinite range and conclude the argument as previously.

It only remains to justify equation (\ref{eqn: secondremainder}). Only here do we utilize the Gaussian convolution structure. We begin by stating the following results due to E. Br\'ezin  and S. Hikami \cite{brezinhikami}, and K. Johansson \cite{johansson2}, (Proposition 1.1, Lemma 2.1, Lemma 3.2).
\begin{proposition} [Br\'ezin-Hikami, Johansson]
The symmetrized eigenvalue measure on $\mathbb{R}^n$ induced by the Johansson matrix $M$ defined in (\ref{eqn: jmatrixdef}) has a density $\rho_n(x)$, given by
\begin{equation}
\rho_n(x) = \int_{\mathcal{H}_N}q_{a^2/n}(x;z(H))\,\mathrm{d}P^{(n)}(H),
\end{equation}
where $a^2=\frac{1}{4}$ and
\[q_{a^2/n}(x;z)=\left(\frac{n}{2\pi a^2}\right)^n\frac{\Delta_n(x)}{\Delta_n(y)}\mathrm{det}(e^{-n(x_j-z_k)^2/(2a^2)})^n_{j,k=1}.\]
$\Delta_n =\prod_{i<j}|x_i-x_j|$ is the Vandermonde determinant.
\end{proposition}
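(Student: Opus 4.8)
The plan is to derive the formula by conditioning on $W$ and invoking the Harish-Chandra--Itzykson--Zuber (HCIZ) integral, reproducing the computation of Br\'ezin--Hikami \cite{brezinhikami} that was made rigorous by Johansson \cite{johansson2}. The point is that, conditionally on $W$, the matrix $M$ is a \emph{deformed GUE} (Gaussian ensemble with external source): it is a Gaussian Hermitian matrix with mean $\frac{1}{2\sqrt n}W$, whose law on the space of $n\times n$ Hermitian matrices has density proportional to
\[\exp\Bigl(-\tfrac{n}{2a^2}\operatorname{tr}\bigl(M - \tfrac{1}{2\sqrt n}W\bigr)^2\Bigr), \qquad a^2 = \tfrac14,\]
$a^2/n$ being the entry-variance of the Gaussian part $\frac{1}{2\sqrt n}V$. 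Expanding the square, $\operatorname{tr}(M - \frac{1}{2\sqrt n}W)^2 = \operatorname{tr}M^2 - \frac{1}{\sqrt n}\operatorname{tr}(MW) + \frac{1}{4n}\operatorname{tr}W^2$, so the only term coupling $M$ to $W$ is the linear one $\operatorname{tr}(MW)$.

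Next I would pass to eigenvalue coordinates using the Weyl integration formula: writing $M = U\Lambda U^*$ with $\Lambda = \mathrm{diag}(x_1,\dots,x_n)$ and $U$ Haar-distributed on $U(n)$, the change of variables produces the Jacobian factor $\Delta_n(x)^2$, and the residual dependence on $U$ is entirely through $\operatorname{tr}(MW) = \operatorname{tr}(U\Lambda U^* W)$. Letting $z_1,\dots,z_n$ denote the eigenvalues of the (suitably normalized) external source $\frac{1}{2\sqrt n}W$ --- these are the quantities written $z(H)$ in the statement, with $H$ playing the role of $W$ --- and $D = \mathrm{diag}(z_1,\dots,z_n)$, the HCIZ formula
\[\int_{U(n)} e^{c\,\operatorname{tr}(U\Lambda U^* D)}\,\mathrm{d}U = \kappa_n\, c^{-n(n-1)/2}\,\frac{\det\bigl(e^{c\,x_i z_j}\bigr)_{i,j=1}^n}{\Delta_n(x)\,\Delta_n(z)}, \qquad \kappa_n = \prod_{k=1}^{n-1}k!,\]
applied with the coupling constant $c = n/a^2$, reduces the conditional eigenvalue density to a constant times $\Delta_n(x)^2\cdot e^{-\frac{n}{2a^2}\sum_i x_i^2}\cdot \frac{\det(e^{c x_i z_j})}{\Delta_n(x)\Delta_n(z)}$. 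One power of $\Delta_n(x)$ cancels, and completing the square $e^{-\frac{n}{2a^2}x_i^2 + c x_i z_j} = e^{-\frac{n}{2a^2}(x_i - z_j)^2}\,e^{\frac{n}{2a^2}z_j^2}$ inside the determinant converts the Gaussian--exponential product into $\det\bigl(e^{-n(x_i - z_j)^2/(2a^2)}\bigr)$, the factors $e^{\frac{n}{2a^2}z_j^2}$ (together with $\operatorname{tr}W^2$ and all the $c$- and $\kappa_n$-dependent constants) being absorbed into the normalization, which after the dust settles is exactly $(n/2\pi a^2)^n$. This gives $\rho_n(x \mid W) = q_{a^2/n}(x; z(W))$ with the $q$ of the statement (the denominator Vandermonde, in the external-source eigenvalues, being what the statement calls $\Delta_n(y)$). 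Taking the expectation over the law $P^{(n)}$ of $W$ and applying Fubini yields $\rho_n(x) = \int_{\mathcal{H}_N} q_{a^2/n}(x; z(H))\,\mathrm{d}P^{(n)}(H)$.

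The only substantive input is the HCIZ formula; everything else is bookkeeping. The routine-but-delicate points are: tracking every normalization constant (the volume of $U(n)$, the Gaussian prefactor, the $c^{-n(n-1)/2}$ and $\prod k!$ from HCIZ) so that the final prefactor comes out exactly as $(n/2\pi a^2)^n$ and $q_{a^2/n}(\cdot\,;z)$ is genuinely a probability density; verifying that it is the \emph{symmetrization} of the eigenvalue measure that removes the absolute value, so that $\Delta_n(x)$ appears linearly (rather than as $|\Delta_n(x)|$), paired against the antisymmetric determinant; and justifying the interchanges of the $U(n)$-integral with the Gaussian integral and of the $W$-expectation with the $x$-integral, both immediate from absolute integrability since $W$ has sub-exponential tails. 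Since this is a known result, in the paper we simply cite \cite{brezinhikami, johansson2}; the sketch above indicates the route a self-contained proof would take.
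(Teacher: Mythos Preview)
Your proposal is correct, and you have correctly anticipated the paper's treatment: the paper does not prove this proposition at all but merely states it, attributing it to Br\'ezin--Hikami \cite{brezinhikami} and Johansson \cite{johansson2} (specifically Proposition~1.1, Lemma~2.1, Lemma~3.2 of the latter). Your HCIZ-based sketch is the standard derivation and matches the route taken in those references.
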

$\mathrm{d}P^{(N)}(H)$ denotes the measure on the space of Hermitian matrices induced by the entry distribution of the matrix $H=\frac{1}{2\sqrt{n}}W$ (see  (\ref{eqn: jmatrixdef})), and $z=\{z_j(H)\}_{j=1}^n$ is the vector of eigenvalues of $H$.
\begin{proposition}[Br\'ezin-Hikami, Johansson]
The correlation functions for $q_{a^2/n}(x;z)$ are given by
\begin{align}
R_m^n(x_1,\hdots,x_m;z) &= \frac{n!}{(n-m)!}\int_{\mathbb{R}^{N-m}}q_{a^2/n}(x;z)\mathrm{d}x_{m+1}\hdots \mathrm{d}x_n \\
&= \emph{det}(K_n^{a^2/n}(x_i,x_j;z))_{i,j=1}^m,
\end{align}
for some kernel $K_n$.
\end{proposition}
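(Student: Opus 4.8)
The plan is to recognize this as the standard fact that an ensemble whose density is a product of two $n\times n$ determinants is a determinantal point process, and to establish it through the Andr\'eief/Eynard--Mehta mechanism. First I would bring $q_{a^2/n}$ into standard form: the prefactors in its definition that do not involve $x$ may be absorbed into a normalization $C(z)$, so that
\[ q_{a^2/n}(x;z) = C(z)\,\Delta_n(x)\,\det\bigl[e^{-n(x_j-z_k)^2/(2a^2)}\bigr]_{j,k=1}^n. \]
Writing $\Delta_n(x) = \det[\phi_i(x_j)]_{i,j=1}^n$ with $\phi_i(x) = x^{i-1}$, and $\psi_k(x) = e^{-n(x-z_k)^2/(2a^2)}$, this exhibits $q_{a^2/n}(\cdot;z)$ as proportional to $\det[\phi_i(x_j)]_{i,j}\cdot\det[\psi_k(x_j)]_{k,j}$. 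Assume for now that $z_1,\dots,z_n$ are pairwise distinct.

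Next I would introduce the moment matrix $G=[G_{ik}]_{i,k=1}^n$, $G_{ik}=\int_{\mathbb R}\phi_i(x)\psi_k(x)\,dx=\int_{\mathbb R}x^{i-1}e^{-n(x-z_k)^2/(2a^2)}\,dx$; completing the square shows $G_{ik}=p_{i-1}(z_k)$ for an explicit polynomial $p_{i-1}$ of exact degree $i-1$ (with leading coefficient independent of $i$), so $\det G$ is a nonzero multiple of the Vandermonde $\Delta_n(z)\neq 0$ and $G$ is invertible. The Andr\'eief (Heine) identity gives $\int_{\mathbb R^n}\det[\phi_i(x_j)]_{i,j}\det[\psi_k(x_j)]_{k,j}\prod_j dx_j=n!\det G$, so $C(z)=(n!\det G)^{-1}$. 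Then I would set
\[ K_n^{a^2/n}(x,y;z):=\sum_{k,l=1}^n \psi_k(x)\,(G^{-1})_{kl}\,\phi_l(y). \]
Multiplicativity of the determinant applied to the factorization $[K_n^{a^2/n}(x_i,x_j;z)]_{i,j}=\Psi G^{-1}\Phi$ (with $\Psi_{ik}=\psi_k(x_i)$, $\Phi_{lj}=\phi_l(x_j)$) yields $\det[K_n^{a^2/n}(x_i,x_j;z)]_{i,j=1}^n=(\det G)^{-1}\Delta_n(x)\det[\psi_k(x_j)]_{k,j}=n!\,q_{a^2/n}(x;z)$, while $\int K_n^{a^2/n}(x,x;z)\,dx=\operatorname{tr}(G^{-1}G)=n$ and $\int K_n^{a^2/n}(x,u;z)K_n^{a^2/n}(u,y;z)\,du=K_n^{a^2/n}(x,y;z)$.

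These three properties feed into the classical integrating-out lemma: any kernel $L$ with $\int L(u,u)\,du=n$ and $\int L(x,u)L(u,y)\,du=L(x,y)$ satisfies $\int\det[L(x_i,x_j)]_{i,j=1}^m\,dx_m=(n-m+1)\det[L(x_i,x_j)]_{i,j=1}^{m-1}$, which one proves by cofactor expansion along the last row and column. Applying this $n-m$ times to $q_{a^2/n}(x;z)=\frac{1}{n!}\det[K_n^{a^2/n}(x_i,x_j;z)]_{i,j=1}^n$ gives $\int q_{a^2/n}(x;z)\,dx_{m+1}\cdots dx_n=\frac{(n-m)!}{n!}\det[K_n^{a^2/n}(x_i,x_j;z)]_{i,j=1}^m$, and multiplying by $n!/(n-m)!$ is the claimed expression for $R_m^n$. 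Finally I would remove the distinctness hypothesis: when eigenvalues of $H$ collide, the numerator $\Delta_n(x)\det[e^{-n(x_j-z_k)^2/(2a^2)}]_{j,k}$ and $\det G$ vanish to the same order, and after rewriting $K_n^{a^2/n}$ in confluent (double-contour-integral) form following Br\'ezin--Hikami both sides extend continuously in $z$, so the identity persists by a limiting argument. The one point that is not purely routine is exactly this degenerate case --- exhibiting a representation of $K_n^{a^2/n}$ that remains well defined when the $z_k$ coincide, together with the identification of $\det G$ with a (confluent) Vandermonde so that the kernel makes sense; the Andr\'eief and integrating-out steps themselves are entirely standard.
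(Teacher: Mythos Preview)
The paper does not supply a proof of this proposition: it is quoted as a known result of Br\'ezin--Hikami and Johansson, with explicit pointers to Proposition~1.1, Lemma~2.1 and Lemma~3.2 of \cite{johansson2}, and is then used as a black box for the subsequent saddle-point analysis. Your proposal therefore cannot be compared against a proof in the paper, but it is a correct and complete derivation of the classical biorthogonal-ensemble mechanism: you cast $q_{a^2/n}$ as a product of two determinants, form the moment (Gram) matrix $G$, verify its invertibility by reducing $\det G$ to a Vandermonde in $z$, define the kernel via $G^{-1}$, check the reproducing property and the trace identity, and then invoke Gaudin's integrating-out lemma iteratively. This is precisely the standard route found in the references the paper cites. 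Your handling of the degenerate case (coinciding $z_k$) by passing to the Br\'ezin--Hikami double-contour representation is also in line with how Johansson treats the kernel, and indeed the paper immediately switches to that representation in the next display.
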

In equation (2.19) in \cite{johansson2}, Johansson gives an alternative form for the determinantal kernel appearing in the previous proposition:
\begin{equation*}
\mathcal{K}_n(u,v;z) = e^{\frac{n(u^2-v^2)}{2a^2}+\omega(u-v)}K_n^{a^2/n}(u,v;z),
\end{equation*}
where $\omega$ is any constant, $\mathcal{K}_n(u,v;z)$ also serves as a determinantal kernel which produces the same correlation functions $q_{a^2/n}(x;z)$.

We use the following variant of the asymptotic analysis of the diagonal kernel $\mathcal{K}_n(u,u;z)$ in \cite{johansson}, derived using ideas from \cite{erdosschleinpeche}. The proof will be found in the next section.
\begin{lemma}\label{lem:diagonalestimate}
Let $\delta_1,\delta_2>0$. Let $\Omega_{R,n}=\{s\in \mathbb{C}: |\Re s|\le R,  n^{-1/6}\le |\Im s| \le R\}$. Let $Z_n$ be the set of $z\in \mathbb{R}^n$ for which the local semicircle law holds, that is, the set of $z$ such that the event on the left side of \eqref{eq: rigidity} is satisfied with $m(z)=\frac{1}{n}\sum_j \frac{1}{z-z_j}$. Then, for $z\in Z_n$ and any $x$ such that $|x|> \sqrt{2}- n^{-1/3+\delta_2}$, we have the estimate
\[\left|\frac{1}{n\rho_{sc}(x)}\mathcal{K}_n(x,x;z)-1\right|\lesssim_{\delta_{1,2}} n^{-1/2+\delta_1}.\]
\end{lemma}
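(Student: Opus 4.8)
The plan is to evaluate $\mathcal{K}_n(x,x;z)$ by the method of steepest descent applied to the double contour integral representation of this kernel, along the lines of \cite{johansson} and \cite{erdosschleinpeche}; the essential new point is that the phase function depends on the random configuration $z$ only through quantities controlled on $\Omega_{R,n}$ by the local semicircle law, since $z\in Z_n$.

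I would start from the representation in \cite{johansson2} (eq. (2.19)), which expresses $\mathcal{K}_n(u,v;z)$ as a double contour integral; on the diagonal $u=v=x$ its integrand is $(w-\zeta)^{-1}\exp\!\bigl(n(G_n(w)-G_n(\zeta))\bigr)$, with
\[
G_n(w)=\frac{1}{2a^2}\Bigl(\frac{w^2}{2}-wx\Bigr)-\frac1n\sum_{j=1}^n\log(w-z_j),\qquad a^2=\tfrac14 ,
\]
the $w$--contour enclosing the $z_j$ and the $\zeta$--contour a vertical line (conventions as in \cite{johansson2}). The saddle points of $G_n$ satisfy, up to the sign convention, $2a^2 m_n(w)=\pm(w-x)$ where $m_n(w):=\tfrac1n\sum_j(w-z_j)^{-1}$; the relevant root is the value $\omega_n(x)$ of the subordination function for the free additive convolution of the empirical law $\mu_z$ of $z$ with the semicircle law of variance $a^2$, and its deterministic counterpart $\omega_{sc}(x)$ produces exactly the density $\rho_{sc}$, the semicircle law of $M$ on $[-\sqrt2,\sqrt2]$. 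Since $z\in Z_n$, by definition $|m_n(w)-m_{sc}(w)|\lesssim (\log n)^{C}/(n|\Im w|)$ uniformly on $\Omega_{R,n}$ (semicircle transform, normalizations understood), so $\omega_n(x)$ lies within this error of $\omega_{sc}(x)$; the hypothesis on $x$ keeps the (complex conjugate pair of) saddles at distance $\gtrsim n^{-1/6}$ from the real axis — hence inside $\Omega_{R,n}$ — and keeps $G_n''$ and $G_n'''$ at the saddles within inverse powers of $\operatorname{dist}(x,\{\pm\sqrt2\})$.

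The core of the argument is the contour deformation: move the $w$--contour through one saddle along the local curve of steepest descent of $\Re G_n$ and the $\zeta$--contour through the conjugate saddle along the curve of steepest ascent. On arcs of length $\gtrsim n^{-1/2+o(1)}$ around the saddles the quadratic (Laplace) approximation is valid — this uses the lower bound on $|G_n''|$ and the upper bound on $|G_n'''|$ from the previous step, and is precisely where the distance of $x$ to $\pm\sqrt2$ enters quantitatively — while on the rest of the contours $\Re(G_n(w)-G_n(\zeta))$ is negative and bounded away from $0$, contributing an exponentially small error. The Laplace evaluation on the arcs reproduces the leading term $n\,\rho^{\mathrm{fc}}_z(x)$, where $\rho^{\mathrm{fc}}_z$ is the density of $\mu_z\boxplus(\text{semicircle of variance }a^2)$; a final use of the local semicircle law identifies $\rho^{\mathrm{fc}}_z(x)$ with $\rho_{sc}(x)$ up to a multiplicative factor $1+O(n^{-1/2+\delta_1})$, and absorbing logarithmic factors into $n^{\delta_1}$ gives the claim.

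The main obstacle I anticipate is the bookkeeping needed to show that the $(\log n)^{C}/(n|\Im w|)$ deviation of $m_n$ from $m_{sc}$ stays harmless throughout: it displaces the saddle points, perturbs $G_n''$ there, and alters the (purely imaginary) exponent $G_n(\omega_n)-G_n(\overline{\omega_n})$, and each of these effects must be shown to contribute at most $n^{-1/2+\delta_1}$ relatively, uniformly over the admissible $x$ — this is the step that ties the distance of $x$ to $\pm\sqrt2$ to the cutoff $n^{-1/6}$ in $\Omega_{R,n}$ and consumes the exponent $\delta_2$. A secondary nuisance is that the term $\tfrac1n\sum_j\log(w-z_j)$ is not controlled by $m_n$ alone when $w$ is close to the bulk of the $z_j$, so evaluating $\Re G_n$ along the contours near their endpoints requires the integrated form $\bigl|\tfrac1n\sum_j\log(w-z_j)-\int\log(w-t)\,d\nu(t)\bigr|\lesssim (\log n)^{C}/(n|\Im w|)$ of the local law, $\nu$ being the limiting spectral law of the $z_j$.
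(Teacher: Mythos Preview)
Your proposal follows essentially the same route as the paper: a saddle-point analysis of the Br\'ezin--Hikami/Johansson double contour integral, with the random saddle points located via the local semicircle law (the paper does this by a contraction-mapping argument, Lemma~\ref{lem: contraction}, obtaining $|s_n^\pm-s^\pm|\le n^{-1/2}$ and $\Im s_n^+\gtrsim n^{-1/6+\delta_2/3}$), and the rest of the argument lifted from \cite{erdosschleinpeche}. Your free-convolution/subordination phrasing of the saddle equation and of the leading term is equivalent to the paper's direct computation with the explicit roots $s^\pm=\tfrac34 x\pm\tfrac i4\sqrt{2-x^2}$.

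One technical point is worth flagging. You work with the representation carrying the bare factor $(w-\zeta)^{-1}$, whereas the paper (again following \cite{erdosschleinpeche}) uses Johansson's \emph{desingularized} form \eqref{eq: johanssonform}, in which $(w-s)^{-1}$ has been traded for the regular factor $g_n(s,w)$. On the diagonal $x=y$ the two phase functions coincide, so the deformed $w$- and $\zeta$-contours pass through the \emph{same} pair of saddles and must cross; in your representation the residue picked up at that crossing is what produces the leading term (of order $n\Im s_n^+/\pi\approx n\rho_{sc}(x)$), while the Laplace evaluation on the arcs gives only the $O(n^{-1/2+\delta_1})$ correction --- not the other way around, as your outline suggests. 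This is a bookkeeping slip rather than a gap in strategy: either switch to the desingularized representation as the paper does, or keep yours but attribute the main term to the residue.
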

Here, $\rho_{sc}$ denotes the semicircle density:
\[\rho_{sc}(x) =\frac{1}{\pi}\sqrt{2-x^2}\mathbf{1}_{[-\sqrt{2},\sqrt{2}]}(x).\]

Assuming the lemma for the moment, we proceed to bound the variance of $\mathcal{N}_n[\varphi^{2,n}]$. In what follows, it will be useful to avoid the spectral edges. For this purpose, we let $\theta_n(x)$ be a smooth, real function such that  $0\le \theta_n \le 1$, 
\[\theta_n(x)=1, \quad x\in [-\sqrt{2}+n^{-1/3+\delta_2}/2,\sqrt{2}-n^{-1/3+\delta_2}/2],\] 
and 
\[\theta_n(x)=0, \quad \text{ for } x\in [-\sqrt{2}+n^{-1/3+\delta_2},\sqrt{2}-n^{-1/3+\delta_2}]^c.\]
Then
\[\mathbf{Var}(\mathcal{N}_n[\varphi^{2,n}])\le 2\mathbf{Var}(\mathcal{N}_n[\theta_n\varphi^{2,n}])+ 2\mathbf{Var}(\mathcal{N}_n[(1-\theta_n))\varphi^{2,n}]).\]
The rigidity of eigenvalues theorem \cite{EYY} (Theorem \ref{thm:EYY} above), implies that 
\[|\{\lambda_j: |\lambda_j|>\sqrt{2}-n^{-1/3+\delta_2}\}| \lesssim n^{1/2+2\delta_2}\]
with overwhelming probability. Since $\|(1-\theta_n) \varphi^{2,n}\|_{L^\infty}\le \|\varphi^{2,n}\|_{L^\infty}$, we have:
\[\mathbf{Var}(\mathcal{N}_n[(1-\theta_n))\varphi^{2,n}]) \lesssim  \|\varphi^{2,n}\|_{L^\infty}^2n^{1+4\delta_2}.\]
By the assumption $\varphi\in C^{1/2+\epsilon}$ and Lemma \ref{lem: hfcutoff}, we have,
\[\|\varphi^{2,n}\|^2_{L^\infty} \le \|\varphi\|^2_{C^{1/2+\epsilon}}n^{-1-2\epsilon}.\]
It follows that 
\[\mathbf{Var}(\mathcal{N}_n[\varphi^{2,n}])\lesssim \mathbf{Var}(\mathcal{N}_n[\theta_n\varphi^{2,n}]),\]
provided $2\epsilon > 4\delta_2$.
Together with the $L^\infty$ bound
\[\|\theta\varphi^{2,n}\|_{L^\infty}\le \|\varphi^{2,n}\|_{L^\infty},\]
this justifies the simplifying assumption, which we make from now on, that $\varphi^{2,n}(x)= 0$ for $|x|>\sqrt{2}-n^{-1/3+\delta_2}$.

 Using equations (\ref{eqn: correlationkernel}), (\ref{eqn: squarekernel}), and (\ref{eqn: doubletrace}) following the derivation of (\ref{eq: variancebound}), we write 
\begin{equation}\label{eqn: T2variancebound}
\mathbf{Var}(\mathcal{N}_n[\varphi^{2,n}])=\iint |\varphi^{2,n}(x)-\varphi^{2,n}(y)|^2\,T_2(x,y)\,\mathrm{d}x\mathrm{d}y,
\end{equation}
where 
\[T_2=n^2\rho_{n,1}(x)\rho_{n,1}(y)-n(n-1)\rho_{n,2}(x,y),\] 
and $\rho_{n,1}$, $\rho_{n,2}$ denote the one- and two-point correlation functions of $M$. 
For technical reasons which will become clear below, we remove a small neighborhood of the diagonal $x=y$ in \eqref{eqn: T2variancebound}. Define 
\[\Delta=\{(x,y):|x-y|\le n^{-1+\delta}\}.\]
We write
\begin{align*}
\iint |\varphi^{2,n}(x)-\varphi^{2,n}(y)|^2\,T_2(x,y)\,\mathrm{d}x\mathrm{d}y &= \iint_{\Delta^c} |\varphi^{2,n}(x)-\varphi^{2,n}(y)|^2\,T_2(x,y)\,\mathrm{d}x\mathrm{d}y\\
&\quad + \iint_{\Delta} |\varphi^{2,n}(x)-\varphi^{2,n}(y)|^2\,T_2(x,y)\,\mathrm{d}x\mathrm{d}y.
\end{align*}
The ``diagonal'' term in the previous equation has the following simple estimate:
\begin{multline}\label{eqn: diagonalbound}
\left| \iint_\Delta |\varphi^{2,n}(x)-\varphi^{2,n}(y)|^2\,T_2(x,y)\,\mathrm{d}x\mathrm{d}y\right| \\ \lesssim n^2\iint_{|x-y|<n^{-1+\delta}} f(x,y)(\rho_{n,1}(x)\rho_{n,2}(y)+\rho_{2,n}(x,y))\,\mathrm{d}x\mathrm{d}{y},
\end{multline}
where $|f(x,y)|\lesssim \|\varphi^{2,n}\|_{L^\infty}^2$. We cover the diagonal of $[-3,3]\times[-3,3]$ by $n^{1-\delta_3}$ squares $S_j=I_j\times I_j$ with $|I_j|\lesssim n^{-1+\delta_3}$, and obtain the bound
\begin{align*}\|\varphi^{2,n}\|_{L^\infty}^2n^2\sum_j \iint_{I_j\times I_j}\rho^{n,1}(x)\rho^{n,1}(y)\,\mathrm{d}x\mathrm{d}y&\lesssim n^{-1-\epsilon}\sum_j \left(\mathbf{E}|\lambda_i:\lambda_i \in I_j|\right)^2\\
&\lesssim n^{-1-\epsilon}n^{1-\delta_3} n^{2\delta_3}\\
&\le n^{\delta_3-\epsilon}.
\end{align*}
The term involving $\rho^{2,n}(x,y)$ in \eqref{eqn: diagonalbound} is bounded similarly.

Using the determinantal structure, we can express $T_2$ in terms of $\mathcal{K}_n$ as
\begin{align} \label{eqn:T2}
T_2(x,y) &= \int \mathcal{K}_n(x,x;z) dP^{(N)}(z(H)) \int\mathcal{K}_n(y,y;z)\,\mathrm{d}P^{(n)}(z(H))\\ \nonumber
& \quad - \int \left(\mathcal{K}_n(x,x;z)\mathcal{K}_n(y,y;z) -  \mathcal{K}_n(x,y;z)\mathcal{K}_n(y,x;z)\right) \,\mathrm{d}P^{(n)}(z(H))\\
& = \int \mathcal{K}_n(x,x;z) \,\mathrm{d}P^{(n)}(z(H)) \int\mathcal{K}_n(y,y;z) \,\mathrm{d}P^{(n)}(z(H)) \nonumber\\ 
& \quad - \int \mathcal{K}_n(x,x;z)\mathcal{K}_n(y,y;z)\,\mathrm{d}P^{(n)}(z(H)) +\int \mathcal{K}_n(x,y;z)\mathcal{K}_n(y,x;z) \,\mathrm{d}P^{(n)}(z(H)).\nonumber
\end{align} 
We insert the final expression for $T_2$ in \eqref{eqn:T2} into (\ref{eqn: T2variancebound}) and find
\begin{align}\label{eqn: expandedvariance}
\mathbf{Var}(\mathcal{N}_n[\varphi^{2,n}])&=\iint |\varphi^{2,n}(x)-\varphi^{2,n}(y)|^2 \Big( \int \mathcal{K}_n(x,x;z) \,\mathrm{d}P^{(n)}(z(H)) \\
& \times \int\mathcal{K}_n(y,y;z) \,\mathrm{d}P^{(n)}(z(H)) \,\mathrm{d}x\mathrm{d}y - \int \mathcal{K}_n(x,x;z)\mathcal{K}_n(y,y;z)\, \mathrm{d}P^{(n)}(z(H))\Big)\,\mathrm{d}x\mathrm{d}y \nonumber \\
&\quad +\iint|\varphi^{2,n}(x)-\varphi^{2,n}(y)|^2 \int \mathcal{K}_n(x,y;z)\mathcal{K}_n(y,x;z)\,\mathrm{d}P^{(n)}(z(H))\,\mathrm{d}x\mathrm{d}y. \nonumber
\end{align}
First, consider the contribution from the final term
\begin{equation}\label{eqn: xyterm}
\iint|\varphi^{2,n}(x)-\varphi^{2,n}(y)|^2 \int \mathcal{K}_n(x,y;z)\mathcal{K}_n(y,x;z)\,\mathrm{d}P^{(n)}(z(H))\,\mathrm{d}x\mathrm{d}y.\end{equation}
The saddle point analysis in the next section yields the following bound for the product of the kernel evaluated at two off-diagonal points on a set of $\mathrm{d}P(z)$-overwhelming probability.
\begin{equation}\label{eqn: offdiagbound}
|\mathcal{K}_n(x,y;z)\mathcal{K}_n(y,x;z)|\lesssim |x-y|^{-2}
\end{equation}
in the region
\[|x-y|>n^{-1+\delta}, \quad |x|, |y|<\sqrt{2}-n^{-1/3+\delta_2}.\]
Thus, the second term in (\ref{eqn:T2}) is bounded up to a constant by
\[\|\varphi\|^2_{C^{1/2+\epsilon}}n^{-1-2\epsilon} \iint_{|x-y|>n^{-1+\delta}} \frac{1}{|x-y|^2}\,\mathrm{d}x\mathrm{d}y \lesssim \|\varphi\|^2_{C^{1/2+\epsilon}}n^{-c},\]
for $c>0$. We have bounded the contribution to \eqref{eqn:T2} from \eqref{eqn: expandedvariance}. The error terms resulting from sets $A$ of negligible probability are easily controlled by the estimate
\begin{multline*}\|\varphi^2_n\|_{L^\infty}^2 \int _A \iint |\mathcal{K}_n(x,y;z)\mathcal{K}_n(y,x;z)|\,\mathrm{d}x\mathrm{d}y \,\mathrm{d}P^{(n)}(z(H))\\ \lesssim \|\varphi^2_n\|^2_{L^\infty}\int_A \iint n^2(\rho_{n,1}(x)\rho_{n,1}(y)+\rho_{2,n}(x,y))\mathrm{d}x\mathrm{d}y \,\mathrm{d}P^{(n)}(z(H)).\end{multline*}

Using Lemma \ref{lem:diagonalestimate}, we can write
\begin{equation}
\label{eqn: densitybound}
\mathcal{K}_n(x,x;z) = n(\rho_{sc}(x) + \psi(x;z)),
\end{equation}
with $|\psi(x;z)| \le C_{\delta_1,\delta_2} n^{-1/2+\delta_1}$ for $|x| < \sqrt{2}-n^{-1/3+\delta_2}$ and $z$ in a set of overwhelming probability. When $|x| <\sqrt{2}- n^{-1/3+\delta_2}$, or $|y| <\sqrt{2}- n^{-1/3+\delta_2}$, we use (\ref{eqn: densitybound}), together with the local semicircle law to show that the double integral over $x$ and $y$ of the difference
\[\int \mathcal{K}_n(x,x;z)\mathcal{K}_n(y,y;z) \mathrm{d}P^{(n)}(z(H)) - \int \mathcal{K}_n(x,x;z) \mathrm{d}P^{(n)}(z(H)) \int\mathcal{K}_n(y,y;z)\mathrm{d}P^{(n)}(z(H))\]
is bounded by $n^2n^{-1+2\delta_2}$. In deriving this bound, we must exclude some sets whose complements have $\mathrm{d}P^{(n)}(z(H))$-overwhelming probability. The corresponding error terms are controlled by noting that
\[\int \mathcal{K}_n(x,x;z)\,\mathrm{d}x=n,\]
whatever the value of $z$.

By Fubini's theorem, we obtain a bound of $C\|\varphi^{2,n}\|^2_{L^\infty}n^2n^{-1+2\max(\delta_2,\delta_1)}$ for first term on the right side in (\ref{eqn: expandedvariance}). Choosing $\delta_1, \delta_2, \delta_3 < \epsilon/20$, we have established (\ref{eqn: secondremainder}).

To prove the result for $H^{1/2+\epsilon}$ test functions $\varphi$ that are supported on $(-\sqrt 2+\epsilon_0, \sqrt 2-\epsilon_0)$, we follow a similar strategy as above.  Before splitting $\varphi$ into Littlewood-Paley components and by our support assumption, we may insert a smooth cutoff $0\le \chi \le 1$ multiplying each $g_k$, where $\chi$ is $1$ on $(-\sqrt 2+\epsilon_0, \sqrt 2-\epsilon_0)$ and $0$ outside $(-\sqrt 2+\epsilon_0/2, \sqrt 2-\epsilon_0/2)$. It is clear that $\|\chi g_k\|_{L^p}\le \|g_k\|_{L^p}$.

For the Littlewood-Paley components $k \leq \log n$, we perform the comparison analysis as above, noting that the analogous equation to equation (\ref{eqn:term1}) contains only the first part, with the interval $[-3,3]\times [-3,3]$ replaced by $[-\sqrt{2}+\epsilon_0/2, \sqrt{2}-\epsilon_0/2] \times [-\sqrt{2}+\epsilon_0/2, \sqrt{2}-\epsilon_0/2]$.  The rest follows from the simple estimate $\|\chi g_k\|_{L^2} \lesssim \|\chi g_k\|_{L^\infty}$ and the characterisation of $H^{1/2+\epsilon}$ spaces in terms of Littlewood-Paley decomposition.

Let $p > \frac{\log n}{\epsilon}$, so that $2^{-p\epsilon} \le \frac{1}{n}$.  Then the sum over the components $k \geq p$ is bounded trivially by:
\begin{align*}
\mathbf{Var}(\mathcal{N}_n[\sum_{k>p}\varphi_k]) &\leq n^2 \|\sum_{k>p}\varphi_k\|^2_{L^{\infty}} \\
&\lesssim 2^{-2p\epsilon}n^2\|\sum_{k>p}\varphi_k\|^2_{H^{1/2+\epsilon}} \\
&\lesssim \|\sum_{k>p}\varphi_k\|^2_{H^{1/2+\epsilon}}.
\end{align*}

Finally for frequencies where $\log n < k < \frac{1}{\epsilon}\log n$, denote by $\psi := \sum_{\log n < k \le \frac{1}{\epsilon} \log n}\chi \varphi_k$, we have from equations (\ref{eqn: xyterm}), (\ref{eqn: offdiagbound}), the Cauchy-Schwarz inequality and following a similar analysis as above, using in particular that $\chi$ is zero close to the spectral edges:
\begin{align*}
\mathbf{Var}(\mathcal{N}_n[\psi]) &\lesssim \iint_{|x-y|>n^{-1+\delta}} |\psi(x)-\psi(y)|^2 \frac{1}{|x-y|^2} \mathrm{d}x\mathrm{d}y.\\
& \lesssim n^{1-\delta}\|\psi\|^2_{L^2} \\
& \lesssim n^{-\delta-\epsilon}\|\psi\|^2_{H^{1/2+\epsilon}}.
\end{align*}

\section{Saddle point analysis of the determinantal kernel}\label{sec: saddlept}
In this section, we justify the asymptotic approximations \eqref{eqn: offdiagbound} and \eqref{eqn: densitybound}. For this, we use the representation of $K_n^{a^2/n}$ given by E. Br\'ezin and S. Hikami \cite{brezinhikami}, and further analyzed by K. Johansson \cite{johansson2}:
\begin{align}
K^{1/4n}_n(x,y;z)&=\frac{e^{2n(x^2-y^2)}n}{(\pi i)^2 }\int_\gamma\int_{\Gamma_L} \frac{1}{w-s}e^{n(f_{n,x}(w)-f_{n,y}(s))}\,\mathrm{d}w\mathrm{d}s \label{eq: crossingcontours}\\
f_{n,x}(w) &= 2(w^2-2xw)+\frac{1}{n}\sum_{j=1}^n\log(w-z_j).\nonumber
\end{align}
$\gamma$ is a rectangular contour 
\[\gamma=\bigcup_{i=1}^4 \gamma_i.\]
\begin{align*}
\gamma_1 :\ & t\rightarrow -M+it, \quad  -ib \le t\le ib; &\gamma_2 :\ & t\rightarrow  t+ib, \quad    -M\le t\le M.\\
\gamma_3 &= -\gamma_1 & \gamma_4 &=\overline{\gamma_2}.
\end{align*}
$M$ is chosen such that $\gamma$ surrounds all the $z_i$ $1\le i \le n$, and $\Gamma_L$ is the vertical contour $\Gamma_L: t\mapsto it+L$, where $0<M<L$ is chosen large enough to make $\Gamma_L$ and $\gamma$ disjoint (Figure \ref{fig:cont1}).
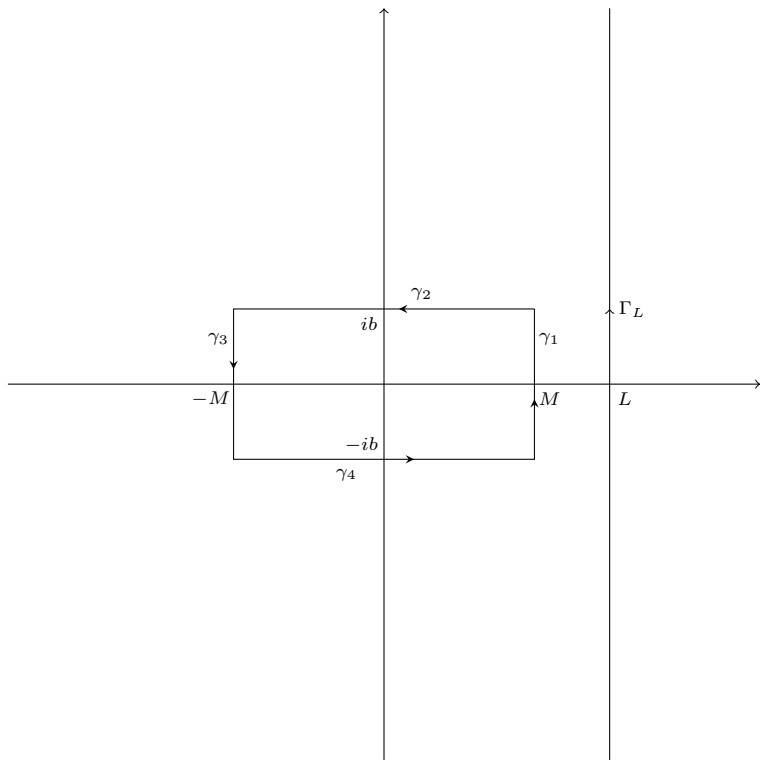
\begin{figure}[h]
\centering
\begin{tikzpicture}[decoration={markings,
     mark=at position 0.15 with {\arrow{stealth};},
     mark=at position 0.4 with {\arrow{stealth};},
     mark=at position 0.7 with {\arrow{stealth};},
     mark=at position 0.9 with {\arrow{stealth};}}]
\draw[->] (-5,0) -- (5,0); 
\draw[->] (0,-5) -- (0,5);
\draw [postaction={decorate}] (2,1) -- (-2,1) -- (-2,-1) -- (2,-1) -- (2,1);
\draw[->] (3,-5)--(3,1);
\draw (3,1)--(3,5);
\tiny \draw (2.2,-0.2) node{$M$};
\draw (-2.3,-0.2) node{$-M$};
\draw (2.2,0.6) node{$\gamma_1$};
\draw (0.5,1.2) node{$\gamma_2$};
\draw (-2.2,0.6) node{$\gamma_3$};
\draw (-0.5,-1.2) node{$\gamma_4$};
\draw (3.2,-0.2) node{$L$};
\draw (3.3,1) node{$\Gamma_L$};
\draw (-0.2,0.8) node{$ib$};
\draw (-0.3,-0.8) node{$-ib$};
\end{tikzpicture}
\caption{Contours $\gamma$ and $\Gamma$}
\label{fig:cont1}
\end{figure}
To prove Lemma \ref{lem:diagonalestimate}, we can follow \cite{erdosschleinpeche} fairly closely. The starting point is the following alternate representation of $K^{1/4n}_n$, due to Johansson, which removes the singularity of the integrand:
\begin{align}K^{1/4n}_n(x,y;z)&=\int_\gamma \int_\Gamma h(s,w)g_n(s,w) e^{n(f_n(w)-f_n(s))}\,\mathrm{d}w\mathrm{d}s, \label{eq: johanssonform} \\
g_n(s,w)&=\frac{4}{z}\left(w+z-y-\frac{1}{4}\sum_{j=1}^n\frac{z_j}{(w-z_j)(s-z_j)}\right), \nonumber \\
h(s,w)&=\frac{e^{4n(x-y)w}(e^{4n(x-y)w}-e^{4n(x-y)(w-s)})}{y-x}, \nonumber \\
f_n&=f_{n,x}(w).\nonumber
\end{align}
As noted in \cite{johansson2}, we have\footnote{Note that there is a typographical error in the corresponding equation in the print version of \cite{johansson2}.}
\[g_n(s,w) = \frac{1}{s}f'_n(w)+\frac{f_n'(s)-f_n'(w)}{s-w}.\]
The result of the saddle point analysis in \cite{erdosschleinpeche} is
\begin{align*}
\frac{1}{n}\mathcal{K}_n\left(x,x+\frac{\tau}{n\rho_{sc}(x)}\right)&=\frac{e^{(x^2-y^2)/(2a^2)}}{n}K_n^{a^2/n}\left(x,x+\frac{\tau}{n\rho_{sc}(x)}\right)\\
&=\rho_{sc}(x)\frac{\sin \pi \tau}{\pi \tau}(1+o(1))+O(n^{-1/2}).
\end{align*}
 uniformly for $\tau$ in a compact set, $|x|<\sqrt{1}-\kappa$, and $a^2$ of order $n^{-1+\delta}$ for $\kappa>0$ and $\delta>0$. We take $\tau=0$ and fix the size $a^2$ of the Gaussian part to be $a^2=\frac{1}{4}$. On the other hand, we want to allow values of $x$ such that $|x|\le \sqrt{2}-n^{-1/3+\delta_2}$. This is the statement of Lemma \ref{lem:diagonalestimate}. The only significant modification we must make to the argument in \cite{erdosschleinpeche} lies in the localization of the saddle points of $f_n(s)$ and $f_n(w)$ in \eqref{eq: johanssonform}. In \cite{erdosschleinpeche}, section 3.2, it is shown that the equation
 \begin{equation}\label{eq: saddlepointeq}
 f_n'(s)=4(s-x)+\frac{1}{n}\sum_{j=1}^n\frac{1}{s-z_j}=0
 \end{equation}
 has two conjugate complex roots $s_n^\pm$, which are approximately equal to the roots $s^\pm$ of
 \begin{equation}\label{eq: exactsaddlepteq}
 f'(s) = 4(s-x)+2(s-\sqrt{s^2-1}).
 \end{equation}
The latter are given by
\begin{equation}
\label{eq: exactroots}
s^\pm=\frac{3}{4}x\pm i\frac{1}{4}\sqrt{2-x^2}.
\end{equation} 
The information we need about $s_n^\pm$ is contained in the following
\begin{lemma}\label{lem: contraction}
Let $\delta_2>0$. For any $z\in Z_n$ and $x\in \mathbb{R}$ such that
\[|x|\le \sqrt{2}-n^{-1/3+\delta_2},\]
the equation
\[f_n'(s)=0\]
has two conjugate roots $s_n^+$ and $s_n^-=\overline{s_n^+}$. For large enough $n$, these roots satisfy:
\begin{align}
|s_n^\pm-s^\pm|\le n^{-1/2},\label{eq: saddleapprox}\\
\Im s^+_n > n^{-1/6+\delta_2/3}. \label{eq: onequarter}
\end{align} 
\begin{proof}
We rewrite \eqref{eq: saddlepointeq} as:
\[s=F_n(s):=x-\frac{1}{4n}\sum_{j=1}^n\frac{1}{s-z_j}.\]
Following \cite{erdosschleinpeche}, we show that for large $n$ the mapping $F_n$ is a contraction on
\[\Xi = \{s: |s-s^+|\le n^{-1/2}\}.\]
This will immediately imply the rest of the lemma. It is clear from the explicit expression \eqref{eq: exactroots} that
$\Im s^+> cn^{-1/6+\delta_2/2}$ if $|x|<\sqrt{2}-n^{-1/3+\delta_2}$.

Note that $s^+$ is a solution of the fixed point equation
\[s=F(s):= x-\frac{1}{2}(s-\sqrt{s^2-1}).\]
The determination of the square root is chosen such that $\sqrt{s^2-1}\sim s $ at infinity. This can be written as $\sqrt{s^2-1}=\sqrt{s-1}\sqrt{s+1}$ with the last two instances of $\sqrt{\cdot}$ denoting the principal determination.
The condition $\Im s^+> n^{-1/6+\delta_2/3}$ and the local semicircle law together imply:
\[|F_n(s)-F(s)|= O(n^{-5/6})\]
for $s\in \Xi$.
Moreover, a simple calculation shows that
\[|F'(s^+)|\le 1-12n^{-1/3+\delta_2}\]
for $x$ in the range of interest. By Cauchy's estimate, for $s\in \Xi$, we have
\begin{align*}
F'_n(s)&= \frac{1}{4n}\sum_{j=1}^n\frac{1}{(s-z_j)^2}\\
&= F'(s) + O(n^{-2/3-\delta_2/3})\\
&= -\frac{1}{2}\left(1-\frac{s}{\sqrt{s^2-1}}\right)+ O(n^{-2/3-\delta_2/3}).
\end{align*}
Thus $|F_n'(s)|\le 1-O(n^{-2/3})$ for $s\in \Xi$.
The above implies that
\begin{align*}
\sup_{\Xi}|F_n(s)-s^+|&=\sup_{\Xi}|F(s)-s^+|+O(n^{-5/6})\\
&= \sup_{\Xi} |F(s)-F(s^+)|+O(n^{-5/6})\\
&= \sup_{\Xi}|F'(s)|n^{-1/2}+O(n^{-5/6})\\
&= (1-cn^{-1/3+\delta/2})n^{-1/2}+O(n^{-5/6})\\
&< (1-c'(n))n^{-1/2}
\end{align*}
for some $0<c'(n)<1$ when $n$ is large enough. This establishes the claim.
\end{proof}
\end{lemma}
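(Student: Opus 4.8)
The plan is to recast the saddle-point equation $f_n'(s)=0$ as a fixed-point problem and run a quantitative contraction argument, in the spirit of \cite{erdosschleinpeche}. Since the $z_j$ are real and $f_n'(s) = 4(s-x) + \frac1n\sum_{j=1}^n\frac{1}{s-z_j}$, the equation $f_n'(s)=0$ is equivalent to $s = F_n(s)$, where $F_n(s) := x - \frac{1}{4n}\sum_{j=1}^n\frac{1}{s-z_j}$, and the corresponding semicircular (limiting) equation is $s = F(s)$ with $F(s) := x - \frac12\bigl(s-\sqrt{s^2-1}\bigr)$, whose root in the upper half-plane is $s^+ = \frac34 x + \frac{i}{4}\sqrt{2-x^2}$ by \eqref{eq: exactroots}. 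I would work on the disc $\Xi := \{\,s : |s-s^+|\le n^{-1/2}\,\}$ and show that $F_n$ maps $\Xi$ into itself and is a contraction there; the unique fixed point is then the root $s_n^+$, the bound $|s_n^+-s^+|\le n^{-1/2}$ is automatic, and $s_n^- = \overline{s_n^+}$ is a root because $f_n'$ has real coefficients.

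The contraction estimate rests on three inputs. First, the formula \eqref{eq: exactroots} together with the hypothesis $|x|\le\sqrt2-n^{-1/3+\delta_2}$ gives $2-x^2 \gtrsim n^{-1/3+\delta_2}$, hence $\Im s^+ \gtrsim n^{-1/6+\delta_2/2}$; since the radius $n^{-1/2}$ of $\Xi$ is of much smaller order, every $s\in\Xi$ stays at distance $\gtrsim n^{-1/6+\delta_2/2}$ from the real axis, and in particular from every $z_j$. Second, for $z\in Z_n$ the local semicircle law bounds the difference between $\frac1n\sum_j\frac{1}{s-z_j}$ and its deterministic limit $2\bigl(s-\sqrt{s^2-1}\bigr)$ by $(\log n)^{4L}(n\,\Im s)^{-1}$, which on $\Xi$ is $O(n^{-5/6})$ (the polylogarithmic factor being harmless); hence $|F_n(s)-F(s)| = O(n^{-5/6})$ on $\Xi$, and differentiating via Cauchy's estimate on a disc of radius comparable to $\Im s^+$ gives $F_n'(s) = F'(s) + O(n^{-2/3})$ there. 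Third, a direct computation with $F'(s) = -\frac12\bigl(1 - \frac{s}{\sqrt{s^2-1}}\bigr)$ at $s = s^+$ — noting that $(s^+)^2-1$ stays bounded away from $0$ even as $x\to\pm\sqrt2$, so $F'$ is smooth near $s^+$ — yields $|F'(s^+)| \le 1 - c\,n^{-1/3+\delta_2}$, the deficit being of order $2-x^2$; combined with the second input and the $O(n^{-1/2})$ variation of $F'$ over $\Xi$, this gives $\sup_{\Xi}|F_n'| \le 1 - c'\,n^{-1/3+\delta_2}$.

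Assembling the three inputs, for $s\in\Xi$ one has $|F_n(s)-s^+| \le |F(s)-F(s^+)| + O(n^{-5/6}) \le \bigl(\sup_{\Xi}|F'|\bigr)n^{-1/2} + O(n^{-5/6}) \le \bigl(1 - c'n^{-1/3+\delta_2}\bigr)n^{-1/2} + O(n^{-5/6})$, which is $< n^{-1/2}$ for $n$ large; together with $\sup_{\Xi}|F_n'|<1$ this shows $F_n$ is a contraction of $\Xi$ into itself, so Banach's theorem supplies the unique fixed point $s_n^+\in\Xi$, and then $\Im s_n^+ \ge \Im s^+ - n^{-1/2} > n^{-1/6+\delta_2/3}$ follows from the first input. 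The main obstacle is the balance of scales: $s^+$ sits only $\Theta(n^{-1/6+\delta_2/2})$ above the real axis — barely inside the range where the local law is quantitatively effective — so the $O(n^{-5/6})$ error incurred by replacing $F_n$ with $F$ must be shown to be strictly dominated by the product of the contraction deficit $n^{-1/3+\delta_2}$ with the disc radius $n^{-1/2}$, i.e.\ by $c'n^{-5/6+\delta_2}$. This is precisely the point at which the strict positivity $\delta_2>0$ is used, and it forces one to establish the \emph{sharp} power $1-c\,n^{-1/3+\delta_2}$ (rather than merely $1-o(1)$) in the edge estimate for $|F'(s^+)|$.
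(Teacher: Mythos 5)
Your proposal follows the paper's argument essentially verbatim: the same fixed-point reformulation $s = F_n(s)$ on the disc $\Xi$ of radius $n^{-1/2}$ about $s^+$, the same use of the local semicircle law (plus Cauchy's estimate) to control $|F_n - F| = O(n^{-5/6})$ and $|F_n' - F'| = O(n^{-2/3})$, the same contraction deficit $|F'(s^+)| \le 1 - c\,n^{-1/3+\delta_2}$, and the same assembly into $\sup_\Xi |F_n(s)-s^+| < n^{-1/2}$. Your closing remark about why the $O(n^{-5/6})$ error must be dominated by $c'\,n^{-5/6+\delta_2}$ makes explicit the role of $\delta_2>0$ that the paper leaves implicit, but it is the same proof.
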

With \eqref{eq: onequarter} in hand, we can follow the procedure in \cite{erdosschleinpeche}, Section 3.4, to localize the main contribution to the double integral to a neighborhood of size $\epsilon=n^{-1+\delta}$ of the four points $w=s^{\pm}_n$ and $s=s^\pm_n$ and obtain Lemma \ref{lem:diagonalestimate}. Some estimates have to be carried out differently, because the parameter $t$ appearing in \cite{erdosschleinpeche} is taken to be small, while it is equal to $a^2=\frac{1}{4}$ in our case, but this does not present any serious difficulty.

We turn to the proof of the off-diagonal estimate \eqref{eqn: offdiagbound}. To obtain a bound on the quantity $K^{1/4n}_n(x,y)K^{1/4n}_n(y,x)$, we find it convenient to work with the integral representation \eqref{eq: crossingcontours}, whose form is somewhat more symmetric in $x$ and $y$. Although the integral is more singular, the main contribution to the two integrals is localized away from the singularity for those pairs $x$, $y$ which concern us. Indeed, the next lemma shows that the two saddle points are separated by a distance at least a constant times $|x-y|$:
\begin{lemma}\label{lem: separation}
There is a constant $C>0$ such that for each $n$ and $z\in Z_n$, the solutions $w_n^+$ and $s_n^+$ of the saddle point equations $f'_{n,x}(w_n^+)=0$ and $f'_{n,y}(s_n^+)=0$ with positive imaginary part satisfy:
\[ |w_n^+-s_n^+|\ge  C\cdot |x-y|\]
for $|x-y|\ge n^{-1+\delta}$ and $|x|,|y|>\sqrt{2}-n^{-1/3+\delta_2}$.
\begin{proof}
The existence of two conjugate solutions $w_n^\pm$, resp. $s_n^\pm$ to each of the saddle point equations follows as in Lemma \ref{lem: contraction}. When $|x-y|\ge 10n^{-1/2}$, the lemma then follows immediately from \eqref{eq: saddleapprox}. If $x$ and $y$ are closer, we start from the saddle point equations 
\begin{align*}
f'_{n,x}(w)&= 4(w-x)+\frac{1}{n}\sum_{j=1}^n\frac{1}{w-z_j}=0,\\
f'_{n,y}(s)&=4(s-y)+\frac{1}{n}\sum_{j=1}^n\frac{1}{s-z_j}=0.
\end{align*}
Subtracting these two equations, we obtain
\[w-s=x-y+(w-s)\cdot \frac{1}{4n}\sum_{j=1}^n\frac{1}{(s-z_j)(w-z_j)}.\]
Rearranging, we find,
\begin{equation}\label{eq: approximateequation}
(w-s)\left(1-\frac{1}{4n}\sum_{j=1}^n\frac{1}{(s-y_j)(w-y_j)}\right)=x-y.\end{equation}

\[\frac{\mathrm{d}}{\mathrm{d}t}\frac{1}{(s-t)(w-t)}=-\frac{1}{(s-t)^2(w-t)}-\frac{1}{(s-t)(w-t)^2}.\]
Thus, by \eqref{eq: onequarter}, we have
\[\|1/((z-\cdot)(w-\cdot))\|_{C^1(\mathbb{R})}\lesssim n^{1/2-3\delta_2/2}.\]
The assumption $z\in Z_n$ allows us to write:
\begin{align*}\frac{1}{n}\sum_{j=1}^n\frac{1}{(s-z_j)(w-y_j)}&=\int \frac{1}{(s-t)(w-t)}\,\rho_{n,1}(\mathrm{d}t)\\
&= -\int \mathfrak{n}(x)\frac{\mathrm{d}}{\mathrm{d}t}\frac{1}{(z-t)(w-t)}\,\mathrm{d}t\\
&= \int \frac{1}{(s-t)(w-t)}\,\rho_{sc}(\mathrm{d}t) +\|1/((s-\cdot)(w-\cdot))\|_{C^1(\mathbb{R})}\cdot O(n^{-1+\delta})
\end{align*}
where $\mathfrak{n}(x) =\int_{\sqrt{2}}^x\rho_{1,n}(\mathrm{d}t).$
It remains to compute the quantity
\[ \int \frac{1}{(s-t)(w-t)}\,\rho_{sc}(\mathrm{d}t)\]
For this purpose, it is useful to develop the integrand into partial fractions:
\[\frac{1}{(s-t)(w-t)}=\frac{1}{w-s}\frac{1}{s-t}+\frac{1}{s-w}\frac{1}{w-t}.\]
Computed separately, the two integrals on the right result in the Stieltjes transform of the semicircle distribution, evaluated at $s$, resp. $w$. Thus we have the identity:
\begin{align*}
\int\frac{1}{(s-t)(w-t)}\,\rho_{sc}(\mathrm{d}t) &= \frac{2}{w-s}(s-\sqrt{s^2-1}-w+\sqrt{w^2-1})\\
&=2\left(-1+\frac{w+s}{\sqrt{w^2-1}+\sqrt{s^2-1}} \right).
\end{align*}
Using \eqref{eq: saddleapprox}, we have
\[\sqrt{(s_n^+)^2-1}=\sqrt{(s^+)^2-1}+\sup_{|\alpha|\le n^{-1/2}}\frac{|s^++\alpha|}{|\sqrt{(s^++\alpha)^2-1}|}O(n^{-1/2}).\]
By \eqref{eq: exactroots}:
\[|\sqrt{(s^+)^2-1}|=\frac{1}{4}[(10x^2-18)^2+36x^2(2-x^2)]^{1/4}.\]
It is readily checked that the function $x\mapsto (10x^2-18)^2+36x^2(2-x^2)$ is symmetric, positive on $[0,\sqrt{2}]$, and strictly decreasing on that interval. At $x=\sqrt{2}$, it takes the value $4$. Thus, we have:
\begin{equation}\label{eq: aandb} \frac{w_n^++s_n^+}{\sqrt{(w_n^+)^2-1}+\sqrt{(s^+_n)^2-1}}= \frac{w^++s^+}{\sqrt{(w^+)^2-1}+\sqrt{(s^+)^2-1}} +O(n^{-1/2}).
\end{equation}
Since $w^+ = s^+ +O(n^{-1/3-\delta_2/2})$, the left side of \eqref{eq: aandb} can be approximated as
\[\frac{s^+}{\sqrt{(s^+)^2-1}}+O(n^{-1/3-\delta_2/2}).\]
The real part of this quantity is bounded above by $3$, and is strictly smaller when $|x|<\sqrt{2}$ (see \eqref{eq: exactratio}  below). 
The equation \eqref{eq: approximateequation} becomes
\begin{equation*}
(w-s)\left(\frac{3}{2}-\frac{s^+}{2\sqrt{(s^+)^2-1}}+O(n^{-1/3-\delta_2/2})\right)=x-y.\end{equation*}
The result follows by taking absolute values on both sides of the last equation. Moreover, if $|x|,|y|\le \sqrt{2}-\epsilon_0$ for a fixed $\epsilon_0>0$, then we also have the reversed inequality:
\[|s^+-w^+|\le C_{\epsilon_0}|x-y|.\]
\end{proof}
\end{lemma}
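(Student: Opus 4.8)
The plan is to reduce the statement to controlling a single scalar ``multiplier'' obtained by subtracting the two saddle point equations. First I would dispose of the range $|x-y|\ge 10n^{-1/2}$ directly: by the localization estimate \eqref{eq: saddleapprox}, $w_n^+$ and $s_n^+$ lie within $n^{-1/2}$ of the explicit roots $w^+,s^+$ of \eqref{eq: exactroots}, and since $\operatorname{Re}w^+=\tfrac34x$, $\operatorname{Re}s^+=\tfrac34y$ we have $|w^+-s^+|\ge\tfrac34|x-y|$, hence $|w_n^+-s_n^+|\ge\tfrac34|x-y|-2n^{-1/2}\ge\tfrac12|x-y|$. When $|x|,|y|\le\sqrt2-\epsilon_0$, the reversed bound follows the same way, using that $\sqrt{2-x^2}$ is Lipschitz on that set.

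For the remaining range $n^{-1+\delta}\le|x-y|<10n^{-1/2}$, I would subtract $f'_{n,x}(w_n^+)=0$ and $f'_{n,y}(s_n^+)=0$ and rearrange to the exact identity
\[(w_n^+-s_n^+)\Bigl(1-\frac{1}{4n}\sum_{j=1}^n\frac{1}{(s_n^+-z_j)(w_n^+-z_j)}\Bigr)=x-y.\]
Because $x\ne y$ the multiplier is nonzero, so a lower bound on $|w_n^+-s_n^+|$ is equivalent to an \emph{upper} bound on the multiplier, and the reversed inequality to a \emph{lower} bound. To evaluate the multiplier I would replace the empirical average over the $z_j$ by $\tfrac14\int\rho_{sc}(\mathrm{d}t)/((s_n^+-t)(w_n^+-t))$: integrating by parts against the eigenvalue counting function and invoking $z\in Z_n$ (the local semicircle law) gives an error of size $\|((s_n^+-\cdot)(w_n^+-\cdot))^{-1}\|_{C^1}\cdot O(n^{-1+\delta})$, which is $o(1)$ for $\delta,\delta_2$ small by the imaginary-part lower bound \eqref{eq: onequarter}. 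A partial fraction decomposition and the Stieltjes transform of the semicircle then turn this integral into $\tfrac12\bigl(-1+(w_n^++s_n^+)/(\sqrt{(w_n^+)^2-1}+\sqrt{(s_n^+)^2-1})\bigr)$. Finally, replacing $w_n^+,s_n^+$ by $w^+,s^+$ (error $O(n^{-1/2})$, since $|\sqrt{(s^+)^2-1}|$ is bounded below uniformly on $|x|\le\sqrt2$) and then $w^+$ by $s^+$ (error $O(n^{-1/3-\delta_2/2})$, since $|x-y|<10n^{-1/2}$ and $\operatorname{Im}s^+\gtrsim n^{-1/6+\delta_2/2}$), the multiplier becomes $\tfrac32-\tfrac12\,s^+/\sqrt{(s^+)^2-1}+o(1)$ uniformly.

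It remains to estimate $|s^+/\sqrt{(s^+)^2-1}|$. From \eqref{eq: exactroots}, $|s^+|^2=(4x^2+1)/8$, and using the algebraic identity $(5x^2-9)^2+9x^2(2-x^2)=(4x^2-9)^2$ one gets $|(s^+)^2-1|=(9-4x^2)/8$; hence $|s^+/\sqrt{(s^+)^2-1}|^2=(4x^2+1)/(9-4x^2)$, which is $\le 9$ for $|x|\le\sqrt2$, with equality only at the edge $|x|=\sqrt2$. Thus the multiplier has modulus $\le 3+o(1)$, giving $|w_n^+-s_n^+|\ge\tfrac14|x-y|$ for $n$ large; and when $|x|,|y|\le\sqrt2-\epsilon_0$ the modulus of $s^+/\sqrt{(s^+)^2-1}$ stays bounded away from $3$, so the multiplier is bounded away from $0$ by a constant $c(\epsilon_0)>0$, giving the reversed inequality with $C_{\epsilon_0}=2/c(\epsilon_0)$. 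I expect the real work to lie not in this last algebraic step but in making the chain of approximations genuinely uniform near the spectral edge, where $\operatorname{Im}s^+\gtrsim n^{-1/6+\delta_2/2}$ is the only lower bound available --- this is what forces the case split at $|x-y|\asymp n^{-1/2}$ and the smallness of $\delta,\delta_2$, and it is consistent with the hypothesis $|x-y|\ge n^{-1+\delta}$, since at the edge itself the multiplier degenerates to $0$.
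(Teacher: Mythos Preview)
Your proposal is correct and follows essentially the same route as the paper: the same case split at $|x-y|\asymp n^{-1/2}$, the same subtraction of the two saddle point equations to isolate the scalar multiplier, the same replacement of the empirical sum by the semicircle integral via $z\in Z_n$, the same partial-fraction/Stieltjes evaluation, and the same two-step approximation $w_n^+,s_n^+\to w^+,s^+\to s^+$. The only difference is in the final algebraic step: the paper uses the relation $\sqrt{(s^+)^2-1}=3s^+-2x$ to write $s^+/\sqrt{(s^+)^2-1}$ explicitly in real and imaginary parts and then observes the real part is at most $3$, whereas you compute the modulus directly via the identity $(5x^2-9)^2+9x^2(2-x^2)=(4x^2-9)^2$, obtaining $|s^+/\sqrt{(s^+)^2-1}|^2=(4x^2+1)/(9-4x^2)\le 9$. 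Your version is slightly cleaner, since it is the modulus of the multiplier (not its real part) that is actually needed to pass from $(w-s)M=x-y$ to $|w-s|\ge C|x-y|$; and the reverse triangle inequality $|3-q|\ge 3-|q|$ then gives the lower bound on $|M|$ (hence the reversed inequality) immediately once $|q|$ is bounded away from $3$ on $|x|\le\sqrt2-\epsilon_0$.
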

The final result of this section is the following
\begin{proposition}
Let $\delta,\delta_2>0$ and $z\in Z_n$. There is a constant $C_{\delta,\delta_2}$ such that, for any $x$ and $y$ satisfying
\begin{align*}
|x-y|&\ge n^{-1+\delta},\\
|x|, |y|&\ge \sqrt{2}-n^{-1/3+\delta_2},
\end{align*}
we have the estimate:
\begin{equation}
\label{eq: productbound}
|K^{1/4n}_n(x,y;z)K^{1/4n}_n(y,x;z)|\le C_{\delta,\delta_2}|x-y|^{-2}.
\end{equation}
\begin{proof}
We will show that the product $|K^{1/4n}_n(x,y;z)||K^{1/4n}_n(y,x;z)|$ is bounded by the minimum of two quantities, one of which bounded by the left side of \eqref{eq: productbound} for 
\[|x-y|>100n^{-1/2},\] the other being bounded by the left side the equation when 
\[|x-y|\le 100n^{-1/2}.\] In either case, the starting point is the contour integral \eqref{eq: crossingcontours}. If, in this integral, we move the contour $\Gamma_L$ into $\Gamma_{L'}$ with $0<L'<M$, we obtain:
\begin{equation}\label{eq: sineb}
K_n^{1/4n}(x,y) = \frac{e^{2n(x^2-y^2)}n}{(\pi i)^2 }\int_\gamma\int_{\Gamma_{L'}} \frac{1}{w-s}e^{n(f_{n,x}(w)-f_{n,y}(s))}\,\mathrm{d}w\mathrm{d}s+e^{4nL'(x-y)}\frac{\sin(4nb(x-y))}{\pi(x-y)}.
\end{equation}

The assumption $|x-y|\ge 100n^{-1/2}$ implies, per  \eqref{eq: saddleapprox} , $|\Re(w_n^+-s_n^+)|\ge 50n^{-1/2}$. This will allow us to choose, for each $n$, $b$ such as to make the second term in \eqref{eq: sineb} vanish, and to extend the integral over $\gamma$ to an integral over an unbounded contour $\tilde{\gamma}(b)$, the union of nearly horizontal curves extending from $-\infty$ to $\infty$, traversed in opposite directions. Indeed, we can find a solution $b$ of 
\begin{equation}\label{eq: sinezero}
\sin(4nb(x-y))=0
\end{equation}
within distance $\frac{n^{-1/2}}{100}$ of any horizontal line (Figure \ref{fig:cont2}). 
\begin{figure}[h]
\centering
\begin{tikzpicture}
\draw[->] (-5,0) -- (5,0); 
\draw[->] (0,-5) -- (0,5); 
\draw[->] (5,2) -- (1,2); 
\draw (1,2) [rounded corners = 3pt] -- (-1,2) -- (-1.8,1) -- (-5,1);
\draw (-5,-1) [rounded corners = 3pt] -- (-1.8,-1) -- (-1,-2) -- (0.7,-2); 
\draw[->] (0.7,-2) -- (1,-2);
\draw (1,-2) -- (5,-2);
\draw[->] (3,-5)--(3,0.7); 
\draw (3,0.7)--(3,5);
\tiny 
\draw (3.2,1) node{$\Gamma$};
\fill [black] (0,2) circle (1pt);
\fill [black] (0,-2) circle (1pt);
\draw (0.2,2.2) node{$ib$};
\draw (0.2,-2.2) node{$-ib$};
\node (w) at (3.3,4) {$w_n^+$}; 
\fill [black] (3,4) circle (1pt);
\node (s) at (-2,1.3) {$s_n^+$};
\fill [black] (-2,1) circle (1pt);
\end{tikzpicture}
\caption{Contour for the case $|x-y| \geq 100n^{-1/2}$}
\label{fig:cont2}
\end{figure}
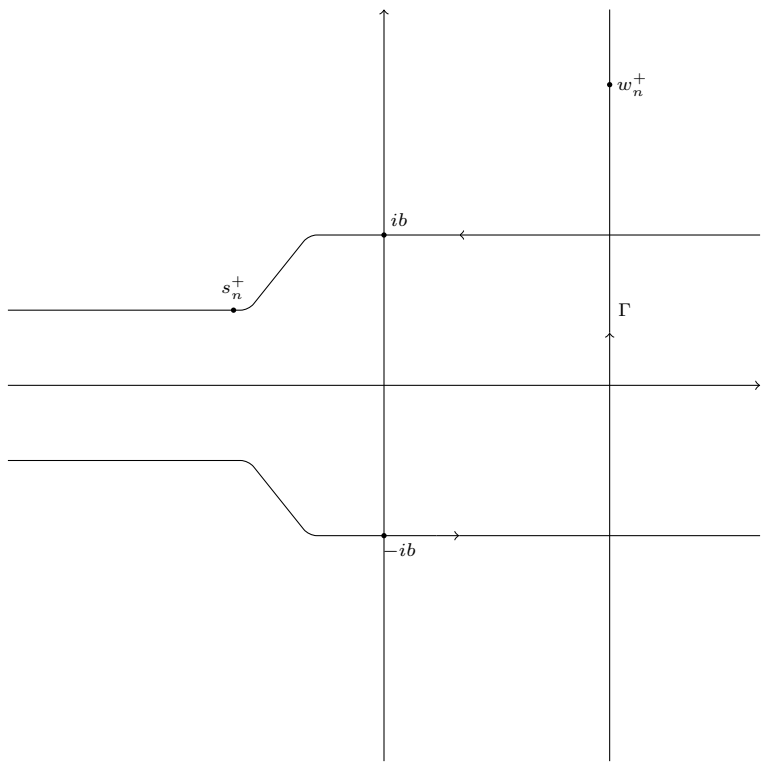

For the purposes of illustration, we assume $\Re s_n^+ < \Re w_n^+$. We first choose $L=\Re w_n^+$, and let the portion of $\tilde{\gamma}$ in the upper half-plane be 
\begin{align*}
\tilde{\gamma}_1: t\mapsto &\ i\Im s_n^+ + t, \quad -\infty < t\le \Re s_n^+-\epsilon\\
&i b(n) + t,  \quad \  \Re w_n^+ \le t <\infty,
\end{align*}
here $b(n)$ is the solution of \eqref{eq: sinezero} closest to $\Im s_n^+$. We choose the part $\tilde{\gamma}^*_1$ of $\tilde{\gamma}_1$ outside an $\epsilon$-neighborhood of $s_n^+$ and between $\Re s_n^+$ and $\Re w_n^+$ to be a smooth curve $\tilde{\gamma}^*_1(t)$, $0 \le t\le 1$ with increasing real part, and monotone imaginary part, and with imaginary part always lying between $\Im s_n^+$ and  $b(n)$. The component $\tilde{\gamma}_2$ of $\tilde{\gamma}$ in the lower half plane is obtained by reflection about the real axis. We let
\[m = |\Im s_n^+ -b(n)|.\]
Our assumptions imply that 
\[k= |\Re w_n^+- \Re s_n^+|\ge 50m.\]
The contribution from neighbourhoods of size $\epsilon=\frac{1}{3}|\Re s_n^+ - \Re w_n^+|$ of the saddle points $s_n^\pm$ and $w_n^\pm$ to the integral
\[K_n^{1/4n}(x,y) = \frac{e^{2n(x-y)^2}n}{(\pi i)^2 }\int_{\tilde{\gamma}}\int_{\Gamma} \frac{1}{w-s}e^{n(f_{n,x}(w)-f_{n,y}(s))}\,\mathrm{d}w\mathrm{d}s\]
are now obtained by a standard Laplace approximation, as in \cite{erdosschleinpeche}. We find that that $K_n^{1/4n}(x,y)$ is given by the sum of four terms of the form
\begin{align*}
\frac{e^{2n(x^2-y^2)}}{(i\pi)^2}\frac{2\pi}{\sqrt{f''_{n,x}(w_n^a)}\sqrt{f''_{n,y}(s_n^b)}}\frac{1}{w_n^a-s_n^b}e^{n(f_{n,x}(w_n^a)-f_{n,y}(s_n^b))}\left(1+O(n^{-1/2})\right),
\end{align*}
for all four choices $a,b\in \{\pm\}$. Multiplying this sum by the corresponding approximation for $K_n^{1/4n}(y,x)$, the real parts of all exponentials involved cancel, since
\begin{align}
f_{n,x}(w_n^+)&=\overline{f_{n,x}(w_n^-)} \label{eq: symmetry}\\
f_{n,y}(s_n^+)&=\overline{f_{n,y}(s_n^-)}.\nonumber
\end{align}
A simple calculation, using also the approximations in Lemma \ref{lem: contraction}, shows that $f_{n,x}''(w_n^\pm)$, $f_{n,y}(s_n^\pm)$ are bounded below. Finally, by Lemma \ref{lem: separation}, we have
\[\frac{1}{|w_n^a - s_n^b|}\lesssim \frac{1}{|x-y|}\] 
for any choice of signs $a,b\in \{\pm\}$. It follows that the contribution to
\[|K_n^{1/4n}(x,y)K_n^{1/4n}(y,x)|\]
from $\epsilon$-neighborhoods of the saddle points is bounded by $|x-y|^{-2}(1+O(n^{-1/2}))$. It remains to estimate the  contour integral $K_n^{1/4n}(x,y)$ for $s$, $w$ at distance greater than $\epsilon$ from $w_n^\pm$, $s_n^\pm$. We will show that the integrals over these portions of the contours give a contribution bounded up to a constant factor by
\[\frac{1}{|x-y|}e^{n(\Re f_{n,x}(w_n^+)-\Re f_{n,y}(s_n^+))}.\]
Using the symmetry \eqref{eq: symmetry}, the real parts of the exponential factor will cancel with the corresponding factors from $K^{1/4n}_n(y,x)$.
Define the set
\[\Omega = \{s=\alpha+i\beta: |\alpha-\Re s_n^+|\ge \epsilon, |\beta-\Im s_n^+|\le 50^{-1}\epsilon\}.\]
The part of the contour $\tilde{\gamma}_1$ outside of a ball of radius $\epsilon$ around $s_n^+$ lies entirely in $\Omega$. For $s\in \Omega$, we have:
\[\Re[f_{n,y}(\alpha+i\beta)-f_{n,y}(s_n^+)]\ge c(\alpha-\Re s_n^+)^2\]
for some constant $c>0$. A computation using $f'(s^+)=0$ shows that for $\alpha+i\beta \in \Omega$,
\[\Re f'(\alpha+i\beta) \ge c(\alpha-\Re s^+).\]
Thus 
\begin{align*}
\partial_x \Re f_{n,y}(\alpha+i\beta)&\ge \Re f'(\alpha+i\beta) - O(n^{-5/6})\\
&\ge \Re f'(s^+) -O(n^{-2/3})\\
&\ge c(\alpha -\Re s^+)\\
&\ge c'(\alpha - \Re s_n^+), \quad \alpha+i\beta \in \Omega.
\end{align*}
Integrating then gives the desired bound.  We also get a similar lower bound along the contour $\Gamma$ (again, see \cite{erdosschleinpeche}).
When we integrate outside the saddle point, one of the terms we have to deal with is of the form
\[\int_\Gamma \mathrm{d}w \int_{\widehat{\gamma}} \mathrm{d}s e^{n(f_{n,x}(w)-f_{n,y}(s))} \frac{1}{w-s}, \]
where $\widehat{\gamma}$ denotes the part of the contour $\tilde{\gamma}_1$ outside a neighborhood of $s^+_n$ of size $\epsilon$.  We will derive the bound for the above integral, the case where the $w$ lies outside an $\epsilon$-neighborhood of the saddle point $w_n^+$ is essentially the same.
Using the bound derived above, we have
\begin{align*}
&\left|\int_\Gamma \mathrm{d}w \int_{\hat{\gamma}} \mathrm{d}s e^{n(f_{n,x}(w)-f_{n,y}(s))} \frac{1}{w-s}\right| \\
& \leq \int_\Gamma \mathrm{d}w  e^{n(f_{n,x}(w)-f_{n,y}(s_n^+))} \int_{\hat{\gamma}} \mathrm{d}s e^{-cn(\Re s - \Re s_n^+)^2} \left|\frac{1}{w-s}\right|.
\end{align*}
Split $\widehat{\gamma}$ into $\widehat{\gamma}_o$ and $\widehat{\gamma}_i$, where $\widehat{\gamma}_o$ is the part outside of the (integrable) singularity, and $\widehat{\gamma}_i$ is the part within $\frac{1}{100} |\Re (w_n^+ - s_n^+)|$ of the singularity.  The above integral can be expressed as:
\begin{align}\label{eq: error1}
& \int_\Gamma \mathrm{d}w  e^{n(f_{n,x}(w)-f_{n,y}(s_n^+))} \int_{\widehat{\gamma}_o} \mathrm{d}s e^{-cn(\Re s - \Re s_n^+)^2} \left|\frac{1}{w-s}\right| \\ \label{eq: error2}
&+ \int_\Gamma \mathrm{d}w  e^{n(f_{n,x}(w)-f_{n,y}(s_n^+))} \int_{\widehat{\gamma}_i} \mathrm{d}s e^{-cn(\Re s - \Re s_n^+)^2} \left|\frac{1}{w-s}\right|
\end{align}
For the first term \eqref{eq: error1}, we use that $\frac{1}{|w-s|} \leq \frac{100}{|\Re (w_n^+ - s_n^+)|}$ on the contour, and 
\[\int_{\widehat{\gamma}_o}e^{-cn(\Re s - \Re s_n^+)^2}\mathrm{d}s \lesssim n^{-1/2}.\] 
We obtain an additional factor of $n^{-1/2}$ for $\Gamma$ from either the $\epsilon$-neighborhood saddle point $w_n^+$ or the gaussian decay of the $w$ integral away from $w_n^+$. We conclude that the term \eqref{eq: error1} is bounded by
\[100 n^{-1}|\Re (w_n^+ - s_n^+)|^{-1} e^{n\Re (f_{n,x}(w_n^+) - f_{n,y}(s_n^+))} \lesssim 100 n^{-1}|x-y|^{-1}e^{n\Re (f_{n,x}(w_n^+) - f_{n,y}(s_n^+))}.\]
For the second term \eqref{eq: error2}, notice that since $|\Re s - \Re w| \leq \frac{1}{100}|\Re (w_n^+-s_n^+)|$, and $w\in\Gamma$ has constant real part,
we have
\begin{align*}
|\Re s - \Re s_n^+| &= |\Re w_n^+ - \Re s_n^+ + \Re s - \Re w| \\
&\geq |\Re w_n^+ - \Re s_n^+| - \frac{1}{100}|\Re (w_n^+-s_n^+)| \\
& \geq \frac{1}{2}|\Re (w_n^+-s_n^+)|.
\end{align*}
Therefore, we have $|\Re s - \Re s_n^+|^2 \geq \frac{1}{4}|\Re (w_n^+-s_n^+)|^2 $.  The singularity $|\frac{1}{w-s}|$ behaves like $1/r$ in two dimensions, therefore the second term is bounded by
\[ e^{n\Re (f_{n,x}(w_n^+) - f_{n,y}(s_n^+))} \frac{1}{100} |\Re w_n^+- \Re s_n^+| e^{-cn|\Re (w_n^+-s_n^+)|^2}.\]
From the fact that
\[ x^2e^{-nx^2} \lesssim n^{-1}\]
for $x \in \mathbb{R}$, we conclude the estimate:
\begin{align*}
e^{\Re n(f_{n,x}(w_n^+) - f_{n,y}(s_n^+))} \frac{1}{100} |w_n^+-s_n^+| e^{cn|\Re (w_n^+-s_n^+)|^2} &\lesssim e^{n\Re (f_{n,x}(w_n^+) - f_{n,y}(s_n^+))} n^{-1}|\Re (w_n^+-s_n^+)|^{-1} \\
&\lesssim e^{n\Re (f_{n,x}(w_n^+) - f_{n,y}(s_n^+))} n^{-1}|x-y|^{-1}.
\end{align*}
In the case $|x-y|\le 100n^{-1/2}$, we do not attempt to set the sine term in \eqref{eq: sineb} to zero. We choose $\tilde{\gamma}$ to consist of two horizontal lines through $s^+_n$ and $s^-_n$, respectively, traversed with opposite orientations (Figure \ref{fig:cont3}).
We can then reproduce the saddle point analysis for $K_n^{1/4n}(x,y)$ and $K_n^{1/4n}(y,x)$. When we multiply the resulting approximations, we find 16 terms similar to the ones encountered above, each of which is bounded, up to constant factors by $\frac{1}{|x-y|^2}$. There are also 8 terms of the form
\[\frac{c^{a,b}_n}{w_n^a-s_n^b }\frac{\sin(2n\Im s_n^+(x-y))}{\pi(x-y)}e^{4n(y-x)\Re w_n^+ }e^{n(f_{n,y}(s_n^b)-f_{n,x}(w_n^a))},\]
where $a,b\in \{\pm\}$ and $c_n^{a,b}$ are bounded independently of $n$. All these terms are bounded by $\frac{1}{|x-y|^2}$ up to a constant factor. To see this, it clearly suffices to show that
\begin{equation}\label{eq: realpartbound}
n \cdot (4(y-s)\Re w_n^+ + \Re f_{n,y}(s_n^b)-\Re f_{n,x}(w_n^a))
\end{equation}
is bounded by a constant independent of $n$, for any choice of signs $a,b \in \{\pm\}$.
Notice the identities
\[\overline{f_{n,x}(w)} =f_{n,x}(\overline{w}),\]
and
\[\overline{f_{n,y}(s)} =f_{n,y}(\overline{s}),\]
so that we have
\begin{align*}
\Re f_{n,x}(w_n^+) &= \Re f_{n,x}(w_n^-)\\
\Re f_{n,y}(s_n^+) &= \Re f_{n,y}(s_n^-).
\end{align*}
Thus it suffices to deal with the case $a=b=+$ in \eqref{eq: realpartbound}. Write
\begin{align*}
f_{n,y}(s)-f_{n,x}(w) &= 2(s^2-2ys -w^2+ 2xw) +\frac{1}{n}\sum_{j=1}^n\log\left(1+\frac{s-w}{w-z_j}\right)\\
&= 2(s^2-2ys -w^2+ 2xw)+ (s-w)\cdot \frac{1}{n}\sum^n_{j=1}\frac{1}{w-z_j} \\
&\quad - \frac{(s-w)^2}{2n}\sum_{j=1}^n\frac{1}{(w-z_j)^2} +\frac{1}{n}\sum_{j=1}^n \sum_{k\ge 3} (-1)^{k-1}\left(\frac{s-w}{w-z_j}\right)^k
\end{align*}
When $w=w_n^+$, we can use the saddle point equation \eqref{eq: saddlepointeq} to replace $\frac{1}{n}\sum_{j=1}^n\frac{1}{w-y_j}$ in the above equation by
\[4x-4w_n^+.\]
Thus:
\begin{align}\label{eq: fndiff}
f_{n,y}(s_n^+)-f_{n,x}(w_n^+) &= 2(s_n^+-w_n^+)^2+4(x-y)s_n^+\\
&\quad - \frac{(s_n^+-w_n^+)^2}{2n}\sum_{j=1}^n\frac{1}{(w_n^+-z_j)^2} +\frac{1}{n}\sum_{j=1}^n \sum_{k\ge 3} (-1)^{k-1}\left(\frac{s_n^+-w_n^+}{w_n^+-z_j}\right)^k. \nonumber
\end{align}
Taking the real part of the first line on the right, we find
\begin{equation}
\label{eq: goodterms}
2\left((\Re(s_n^+)-\Re(w_n^+))^2-(\Im(s_n^+)-\Im(w_n^+))^2\right) +4(x-y)\Re s_n^+.
\end{equation}
Note that the term involving imaginary parts is always negative. We will find below that it cancels another, positive and real, term in the sum.
Using \eqref{eq: saddleapprox}, we have:
\begin{align*}
\Re(s_n^+)-\Re(w_n^+) &= \frac{3}{2}(x-y) + O(n^{-1/3}) = O(n^{-1/2}),\\
\Im(s_n^+)-\Im(w_n^+) &= \frac{1}{2}(\sqrt{2-y^2}-\sqrt{2-x^2})+O(n^{-1/2}) = O(n^{-1/3-\delta_2/2}).
\end{align*}
By \eqref{eq: goodterms}, we see that we can estimate \eqref{eq: realpartbound} provided that we can control the real parts of all terms appearing on the second line of \eqref{eq: fndiff}. 
Since we have
\begin{align*}
|s_n^+-w_n^+|&\lesssim n^{-1/3-\delta_2/2}\\
|w_n^+-z_j|&\ge \Im w_n^+ > n^{-1/6+\delta_2/2},
\end{align*}
we can easily estimate the sum
\begin{align*}\left|\frac{1}{n}\sum_{j=1}^n \sum_{k\ge 3} (-1)^{k-1}\left(\frac{s_n^+-w_n^+}{w_n^+-z_j}\right)^k\right| &\lesssim \sum_{k\ge 3} (Cn^{-1/3-\delta_2})^k\\
&\lesssim n^{-1-3\delta_2}.
\end{align*}
It remains only to deal with the term
\[- \frac{(s_n^+-w_n^+)^2}{2n}\sum_{j=1}^n\frac{1}{(w_n^+-z_j)^2}.\]
The potentially dangerous terms comprise the real part of the product above:
\begin{gather}
-\frac{1}{2}\left((\Re(s_n^+)-\Re(w_n^+))^2-(\Im(s_n^+)-\Im(w_n^+))^2\right)\Re \frac{1}{n}\sum_{j=1}^n\frac{1}{(w_n^+-z_j)^2}, \label{eq: danger1}\\
(\Re s_n^+-\Re w_n^+)(\Im s_n^+-\Im w_n^+)\Im \frac{1}{n}\sum_{j=1}^n\frac{1}{(w_n^+-z_j)^2} \label{eq: danger2}.
\end{gather}
As in the proof of Lemma \ref{lem: contraction}, we have the approximation
\begin{align}
\frac{1}{2n}\sum_{j=1}^n\frac{1}{(w_n^+-z_j)^2} &= -1+\frac{w_n^+}{\sqrt{(w_n^+)^2-1}} +O(n^{-1/3-\delta_2/3}) \nonumber \\
&= -1+\frac{s^+}{\sqrt{(s^+)^2-1}}+O(n^{-1/2}),\label{eq: approxagain}
\end{align}
where $s^+$ is as in $\eqref{eq: exactroots}$. We examine the imaginary and real parts of the quantity \eqref{eq: approxagain}. By \eqref{eq: exactsaddlepteq}, we have:
\begin{equation}
\sqrt{(s^+)^2-1}=3s^+-2x.
\end{equation}
Thus, we have:
\begin{align}\label{eq: exactratio}
\frac{s^+}{\sqrt{(s^+)^2-1}} = \frac{s^+}{3s^+-2x}=\frac{3x^2+3\sqrt{2-x^2}}{x^2+9(2-x^2)}-i\frac{9x\sqrt{2-x^2}}{x^2+9(2-x^2)}
\end{align}
The real part is a symmetric, positive function which increases from $0$ to $3$ on the interval $[0,\sqrt{2}]$. The imaginary part is an anti-symmetric, negative for $0<x<\sqrt{2}$ and zero at the endpoints. 

For \eqref{eq: danger1}, we find:
\begin{align*}
(\Im(s_n^+)-\Im(w_n^+))^2\cdot \left(\frac{3x^2+3\sqrt{2-x^2}}{x^2+9(2-x^2)} - 1\right)  +O(n^{-1})
\end{align*}
The factor in parentheses is at most $2$, and so the sum of the first term in the previous equation with the term $-2(\Im(s_n^+)-\Im(w_n^+)^2$ from \eqref{eq: goodterms} is at most zero.
For the term \eqref{eq: danger2}, we find:
\begin{align*}
 (\Re s_n^+-\Re w_n^+)(\Im s_n^+-\Im w_n^+)\Im \frac{1}{n}\sum_{j=1}^n\frac{1}{(w_n^+-z_j)^2} &=  -\frac{3}{4}(\Re s_n^+-\Re w_n^+)(\sqrt{2-x^2}-\sqrt{2-y^2})\\
 &\times\left(\frac{9x\sqrt{2-x^2}}{x^2+9(2-x^2)}\right)+O(n^{-1})\\
 &= \frac{3}{4}(\Re s_n^+-\Re w_n^+)(y-x)(y+x)\\
 &\times \frac{\sqrt{2-x^2}}{\sqrt{2-y^2}+\sqrt{2-x^2}} \frac{1}{x^2+9(2-x^2)} +O(n^{-1}).
\end{align*}
For  $2-y^2 = o(|x-y|)$, the ratio of radicals is bounded, and so the last quantity is $O(n^{-1})$. All the terms in \eqref{eq: realpartbound} are now accounted for, and the argument is closed.
\end{proof}
\end{proposition}

\section{CLT for $C^{1-\epsilon}$ and $H^{1-\epsilon}$: Wigner matrices}
In this section, we prove the CLT for general Wigner matrices under matching moments conditions at the $C^{1-\epsilon}$ level of regularity.
The proof follows similarly to that of the previous section.  Writing 
\begin{align*}
\varphi &= \left(\sum_{k\le (1+\delta)\log n} +\sum_{k>(1+\delta)\log n}\right)\varphi_k \\
&= \varphi^{1,n}+\varphi^{2,n},
\end{align*}
where $\delta$ is as in Proposition \ref{prop:comp}.
The variance of the first term is bounded as in Section \ref{sec: wigner}; we now compare $M$ with a GUE matrix with entries of complex variance $1$ rather than $1/\sqrt{2}$ times a GUE matrix.  For the second term, we use the trivial bound:
\begin{align*}
\mathbf{Var}(\mathcal{N}_n[\varphi^{2,n}])&\lesssim n^2 \|\varphi^{2,n}\|_{L^{\infty}} \\
&\lesssim n^2 \sum_{k >(1+\delta)\log n} 2^{-2k(1-\epsilon)} \|\varphi^{2,n}\|^2_{C^{1-\epsilon}}
\end{align*}
Picking $\epsilon$ sufficiently small, the above quantity is $o(1)$.  As in previous cases, this bound for the variance implies the CLT.

The proof for $H^{1-\epsilon}$ test functions supported away from the edge follows the same way as in the proof of the CLT for $H^{1/2+\epsilon}$ test functions in the Johansson matrices case.

 \end{document}